\newtheorem{thm}{Theorem}[section]
\newtheorem{prop}[thm]{Proposition}
\newtheorem{cor}[thm]{Corollary}
\newtheorem{lem}[thm]{Lemma}
\theoremstyle{remark}
\newtheorem{remark}[thm]{Remark}
\newtheorem{ex}[thm]{Example}
\theoremstyle{definition}
\renewcommand*\env@matrix[1][*\c@MaxMatrixCols c]{%
  \hskip -\arraycolsep
  \let\@ifnextchar\new@ifnextchar
  \array{#1}}
\newcommand*\isom{\xrightarrow{\sim}}
\newcommand{\pair}[1]{\langle#1\rangle}
\newcommand{\divisor}{\operatorname{div}}
\newcommand{\Div}{\operatorname{Div}}
\newcommand{\mult}{\operatorname{mult}}
\newcommand{\ord}{\operatorname{ord}}
\newcommand{\length}{\operatorname{length}}
\newcommand{\Pic}{\operatorname{Pic}}
\newcommand{\CH}{\operatorname{CH}}
\newcommand{\Spec}{\operatorname{Spec}}
\newcommand{\Aut}{\operatorname{Aut}}
\newcommand{\Sym}{\operatorname{Sym}}
\newcommand{\Sp}{\operatorname{Sp}}
\def\qq{\mathbb{Q}}
\def\PP{\mathbb{P}}
\def\rr{\mathbb{R}}
\def\zz{\mathbb{Z}}
\def\cc{\mathbb{C}}
\def\UU{\mathbb{U}}
\def\DD{\mathbb{D}}
\def\ss{\mathcal{S}}
\def\nn{\mathcal{N}}
\def\HH{\mathbb{H}}
\def\mm{\mathcal{M}}
\def\ll{\mathcal{L}}
\def\bb{\mathcal{B}}
\def\QQ{\mathcal{Q}}
\def\CC{\mathcal{C}}
\def\gg{\mathcal{G}}
\def\oo{\mathcal{O}}
\def\aa{\mathcal{A}}
\def\ee{\mathcal{E}}
\def\ff{\mathcal{F}}
\def\uu{\mathcal{U}}
\def\xx{\mathcal{X}}
\def\Im{\mathrm{Im}\,}
\def\d{\mathrm{d}}
\def\eps{\epsilon}
\def\h{\mathrm{h}}
\def\Hdg{\mathrm{Hdg}}
\numberwithin{equation}{section}
\newcounter{nootje}
\begin{document}

\title[On the height of Gross-Schoen cycles in genus three]{On the height of Gross-Schoen cycles in genus three}

\author{Robin de Jong}

\begin{abstract} We show that there exists a sequence of genus three curves defined over the rationals in which the height of a canonical Gross-Schoen cycle tends to infinity.
\end{abstract}

\maketitle

\thispagestyle{empty}

\section{Introduction}
\renewcommand*{\thethm}{\Alph{thm}}

Let $X$ be a smooth, projective and geometrically connected curve of genus $g \geq 2$ over a field $k$ and let $\alpha$ be a divisor of degree one on $X$. The Gross-Schoen cycle $\Delta_\alpha$ associated to $\alpha$ is a modified diagonal cycle in codimension two on the triple product $X^3$, studied in detail in \cite{gs} and \cite{zhgs}. The cycle $\Delta_\alpha$ is homologous to zero, and its class in $\CH^2(X^3)$ depends only on the class of $\alpha$ in $\Pic^1 X$.  

Assume that $k$ is a number field or a function field of a curve. Gross and Schoen show in \cite{gs} the existence of a Beilinson-Bloch height $\pair{\Delta_\alpha,\Delta_\alpha} \in \rr$ of the cycle $\Delta_\alpha$, under the assumption that $X$ has a ``good'' regular model over $k$. A good regular model exists after a suitable finite extension of the base field $k$, and one can unambiguously define a height $\pair{\Delta_\alpha,\Delta_\alpha}$ of the Gross-Schoen cycle for all $X$ over $k$ and all $\alpha \in \Div^1 X$ by passing to a finite extension of $k$ where $X$ has a good regular model, computing the Beilinson-Bloch height over that extension,  and dividing by the degree of the extension. 

Standard arithmetic conjectures of Hodge Index type \cite{giso} predict that one should always have the inequality $\pair{\Delta_{\alpha},\Delta_{\alpha}} \geq 0$, and that equality should hold if and only if the class of the cycle $\Delta_{\alpha}$ vanishes in $\CH^2(X^3)_\qq$. 
Zhang \cite{zhgs} has proved formulae that connect the height $\pair{\Delta_\alpha,\Delta_\alpha}$ of a Gross-Schoen cycle with more traditional invariants of $X$, namely the stable self-intersection of the relative dualizing sheaf, and the stable Faltings height. Zhang's formulae feature some new interesting local invariants of $X$, called the $\varphi$-invariant and the $\lambda$-invariant. 

For $\alpha \in \Div^1 X$ let $x_\alpha$ be the class of the divisor $\alpha - K_X/(2g-2)$ in $\Pic^0(X)_\qq$, where $K_X$ is a canonical divisor on $X$. Then a \emph{canonical} Gross-Schoen cycle on $X^3$ is a Gross-Schoen cycle $\Delta_\alpha$ for which the class $x_\alpha$ vanishes in $\Pic^0(X)_\qq$. 
A corollary of Zhang's formulae in \cite{zhgs} is that for given $X$, the height $\pair{\Delta_{\alpha},\Delta_{\alpha}}$ is minimized for $\Delta_\alpha$ a canonical Gross-Schoen cycle. The question as to the non-negativity of $\pair{\Delta_{\alpha},\Delta_{\alpha}}$ is therefore reduced to the cases where $\Delta_\alpha$  is canonical. 

As an example, for $X$ a hyperelliptic curve and $\alpha$ a Weierstrass point on $X$ one has by \cite[Proposition~4.8]{gs}  that $\Delta_\alpha$ is zero in $\CH^2(X^3)_\qq$. It follows that the height $\pair{\Delta_{\alpha},\Delta_{\alpha}}$ vanishes, and by Zhang's formulae the height of any Gross-Schoen cycle on a hyperelliptic curve is non-negative.

When $k$ is a function field in characteristic zero the inequality $\pair{\Delta_{\alpha},\Delta_{\alpha}} \geq 0$ is known to hold by an application of the Hodge Index Theorem \cite{zh2}. It seems that only very little is known though beyond the hyperelliptic case when $k$ is a function field in positive characteristic, or a number field. K.\ Yamaki shows  in \cite{ya} that $\pair{\Delta_{\alpha},\Delta_{\alpha}} \geq 0$ if $X$ is a non-hyperelliptic curve of genus three with semistable reduction over a function field, under the assumption that certain topological graph types do not occur as dual graph of a special fiber of the semistable regular model. 

The purpose of this paper is to prove the following theorem.
\begin{thm} \label{main_intro} There exists a sequence of genus three curves over $\qq$ in which the height of a canonical Gross-Schoen cycle tends to infinity.
\end{thm}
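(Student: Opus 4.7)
The starting point is Zhang's formula from \cite{zhgs}, which for a canonical Gross--Schoen cycle specialises in genus $g=3$ to
\[
\pair{\Delta_\alpha,\Delta_\alpha} \;=\; \tfrac{7}{4}\,(\omega_a,\omega_a) \;-\; \sum_v \varphi(X_v)\,\log N_v,
\]
the sum running over all places $v$ of $k$. Theorem~\ref{main_intro} thus reduces to exhibiting a sequence of non\nobreakdash-hyperelliptic genus three curves $X_n/\qq$ along which the right hand side tends to $+\infty$; hyperelliptic curves are excluded at the outset because their Gross--Schoen cycles vanish in $\CH^2(X^3)_\qq$, so the $X_n$ must be realised as smooth plane quartics.

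First I would analyse the two terms locally. At a non\nobreakdash-archimedean place $v$ of semistable reduction, both $(\omega_a,\omega_a)_v$ and $\varphi(X_v)$ are explicit combinatorial invariants of the dual graph $\Gamma_v$ of the special fiber, viewed as a metrised graph; the finite list of possible $\Gamma_v$ in genus three is precisely the list studied in \cite{ya}. I would pick a one\nobreakdash-parameter family of graphs $\Gamma_n$ in this list---for instance obtained by elongating a single bridge or loop in a fixed combinatorial type---and verify the metrised\nobreakdash-graph inequality
\[
\tfrac{7}{4}(\omega_a,\omega_a)_v - \varphi(X_v)\log N_v \;\longrightarrow\; +\infty
\]
as the chosen edge length (times $\log N_v$) grows. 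At archimedean places both $\varphi(X_\sigma)$ and $(\omega_a,\omega_a)_\sigma$ are continuous functions on $M_3(\cc)$, hence uniformly bounded on any prescribed compact subset of moduli.

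The arithmetic core of the proof is then to exhibit an infinite sequence of smooth plane quartics $X_n/\qq$ satisfying: (i) $X_n$ has semistable reduction of dual graph type $\Gamma_n$, with the distinguished edge of growing length, at a prescribed bad prime $p_n$; (ii) $X_n$ has good reduction at all other finite primes; and (iii) the complex points $X_n(\cc)$ stay in a fixed compact subset of $M_3(\cc)$. A natural way to produce such $X_n$ is to start from a fixed stable plane quartic realising $\Gamma_n$ and write down an explicit $\qq$\nobreakdash-deformation whose generic member is smooth, controlling the other primes via Newton polygon / resolution arguments and controlling the complex modulus by fixing enough coefficients.

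The main obstacle, I expect, is this last arithmetic realisation running in parallel with the graph inequality: one has to guarantee that the local gain at $p_n$ genuinely survives after subtracting $\sum_v \varphi(X_v)\log N_v$ at all other places, and that no parasitic bad reduction appears at other primes that could spoil the boundedness estimates. If both the metrised\nobreakdash-graph inequality and the explicit construction over $\qq$ go through, Zhang's formula then yields $\pair{\Delta_\alpha,\Delta_\alpha}_{X_n}\to +\infty$ directly.
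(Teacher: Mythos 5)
There is a genuine gap at the very first step of your plan. You start from Zhang's identity (Theorem \ref{zhang_main}), $\pair{\Delta_\alpha,\Delta_\alpha}=\tfrac{7}{4}\pair{\hat{\omega},\hat{\omega}}-\sum_v\varphi(X_v)\log Nv$ in genus three, and then treat ``$(\omega_a,\omega_a)_v$'' as an explicit combinatorial invariant of the dual graph at each bad place. No such local invariant exists: $\pair{\hat{\omega},\hat{\omega}}$ is a global arithmetic intersection number (its strict positivity is the Bogomolov conjecture), and it is not computable from, nor canonically decomposable into, dual-graph data at the finite places. Consequently the inequality you need --- that the ``gain'' from a long edge at a bad prime survives after subtracting $\sum_v\varphi(X_v)\log Nv$ --- is essentially the conjectural lower bound (\ref{conj}), $\pair{\hat{\omega},\hat{\omega}}\geq\tfrac{2g-2}{2g+1}\sum_v\varphi(X_v)\log Nv$, which is exactly equivalent to the non-negativity of the height and is open; the unconditional bound available (\cite[Corollary~1.4]{djnt}, with constant $\tfrac{2}{3g-1}=\tfrac14$ in genus three) is too weak to make your local bookkeeping close. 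So the strategy of generating divergence at finite places while keeping the archimedean terms bounded on a compact part of moduli cannot be carried out with the tools you invoke; it presupposes the very inequality one is trying to establish.

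The paper circumvents this by switching, via the Noether formula and (\ref{omegasq}), to the $\lambda$-invariant formulation (Corollary \ref{zhang}) and then to the identity of Theorem \ref{formula_GS_genusthree} in terms of the modular form $\chi_{18}$: there the local non-archimedean terms $\tfrac{1}{18}\ord_v(\chi'_{18})-\lambda(X_v)$ genuinely are local, and they can be bounded below combinatorially through the Horikawa index (Proposition \ref{chi_and_Ind}, Corollary \ref{lower_bound_ord}). Moreover the divergence is produced in the opposite way from your plan: for Gu\`ardia's curves $C_n$ the finite-place contributions are merely shown to be non-negative, while the term that tends to infinity is the archimedean one, because $C_n\cong D_{1/n}$ degenerates in $\mm_3(\cc)$ toward a two-elliptic-components boundary point as $n\to\infty$, and the Hain--Reed/Brosnan--Pearlstein asymptotics give growth like $c\log n$. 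Note also that even within the $\lambda$-formulation your idea of elongating an arbitrary edge type is delicate: as recalled in the paper, there are genus-three pm-graphs for which the relevant local invariant is negative, so the choice of degeneration type matters.
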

To the best of the author's knowledge, Theorem \ref{main_intro} is the first result to prove unconditionally the existence of a curve $X$ over a number field such that a canonical Gross-Schoen cycle on $X^3$ has strictly positive height. 

Our proof of Theorem \ref{main_intro} is, like Yamaki's work, based on Zhang's formulae. More precisely we use the formula that relates the height of a canonical Gross-Schoen cycle on $X^3$ with the stable Faltings height of $X$. We then express the Faltings height of a non-hyperelliptic curve of genus three in terms of the well-known modular form $\chi_{18}$ of level one and weight~$18$, defined over $\zz$. Combining both results we arrive at an expression for the height of a canonical Gross-Schoen cycle on a non-hyperelliptic genus three curve $X$ with semistable reduction as a sum of local contributions ranging over all places of $k$, cf. Theorem \ref{formula_GS_genusthree}. 

The local non-archimedean contributions can be bounded from below by some combinatorial data in terms of the dual graphs associated to the stable model of $X$ over $k$. This part of the argument is heavily inspired by Yamaki's work \cite{yaf} dealing with the function field case. In fact, the differences with \cite{yaf} at this point are only rather small: the part of \cite{yaf} that works only in a global setting, by an application of the Hirzebruch-Riemann-Roch theorem, is replaced here by a more local approach, where the application of Hirzebruch-Riemann-Roch is replaced by an application of Mumford's functorial Riemann-Roch \cite{mu}. 

The modular form $\chi_{18}$ is not mentioned explicitly in \cite{yaf} but clearly plays a role in the background. As an intermediate result, we obtain an expression for the local order of vanishing of $\chi_{18}$ in terms of the Horikawa index \cite{ko} \cite{reid} and the discriminant, cf.\ Proposition \ref{chi_and_Ind}. This result might be of independent interest.

We will then pass to a specific family of non-hyperelliptic genus three curves $C_n$ defined over $\qq$, for $n \in \zz_{>0}$ and $n \to \infty$, considered by J.\ Gu\`ardia in  \cite{gu}. In the paper \cite{gu}, the stable reduction types of the curves $C_n$ are determined explicitly. By going through the various cases, we will see that the local non-archimedean contributions to the height of a canonical Gross-Schoen cycle on $C_n$, as identified by Theorem \ref{formula_GS_genusthree}, are all non-negative. 

To deal with the archimedean contribution, we observe that the curves $C_n$ are all fibers of the family of smooth curves
\[ D_\kappa \colon \, y^4 = x(x-1)(x-\kappa) \, , \, \kappa \in \PP^1  \setminus  \{0,1,\infty\} \, .  \]
 The family $D_\kappa$  is rather special and has been studied in detail by various authors, see for instance G.\ Forni \cite{fo}, F.\ Herrlich and G.\ Schmith\"usen \cite{hs}, and M.\ M\"oller \cite{mo}. As is shown in these references, the family $D_\kappa$ gives rise to a Teichm\"uller curve in $\mm_3$, and to a Shimura curve in $\aa_3$. Let $E_\kappa$ denote the elliptic curve $y^2 = x(x-1)(x-\kappa)$. Then for each $\kappa \in \PP^1 \setminus  \{0,1,\infty\}$, the jacobian of $D_\kappa$  is isogenous to the product $E_\kappa \times E_{-1} \times E_{-1}$, by \cite[Proposition~2.3]{gu} or \cite[Proposition~7]{hs}. 

We show that the archimedean contribution to the height of a canonical Gross-Schoen cycle of $C_n$ is bounded from below by a quantity that tends to infinity like $\log n$. In order to do this we recall the work \cite{hrar} by R.\ Hain and D.\ Reed on the Ceresa cycle, which allows us to study the archimedean contribution as a  function of $\kappa \in \PP^1 \setminus \{0,1,\infty\}$. For $n \to \infty$ we have $\kappa \to 0$. The stable reduction of the family $D_\kappa$ near $\kappa=0$ is known, see for instance \cite[Proposition~8]{hs}, and the asymptotic behavior of the archimedean contribution near $\kappa=0$ can then be determined by invoking an asymptotic result due to P.~Brosnan and G.~Pearlstein \cite{bp}. 

The paper is organized as follows. In Sections \ref{sec:nonarch} and \ref{sec:arch} we recall the non-archimedean and archimedean $\varphi$- and $\lambda$-invariants from Zhang's paper \cite{zhgs}. The main formulae from \cite{zhgs} relating the height of the Gross-Schoen cycle to the self-intersection of the relative dualizing sheaf and the Faltings height are then stated in Section \ref{sec:zhang_formulae}. In Section \ref{sec:some_lambda} we display Zhang's $\lambda$-invariant for a couple of polarized metrized graphs that we will encounter in our proof of Theorem \ref{main_intro}.

In Section \ref{sec:modular_form} we recall a few general results on analytic and algebraic modular forms, and in Section \ref{sec:biext} we recall the work of Hain and Reed, and Brosnan and Pearlstein that we shall need on the asymptotics of the archimedean contribution to the height. In Section \ref{chi} we discuss the modular form $\chi_{18}$. The first new results are contained in Section \ref{sec:horikawa}, where we recall the Horikawa index for stable curves in genus three and show how it can be expressed in terms of the order of vanishing of $\chi_{18}$ and the discriminant. This leads to a useful lower bound for the order of vanishing of $\chi_{18}$. Sections \ref{sec:proof_main}--\ref{sec:archimedean} contain the proof of Theorem \ref{main_intro}. 

\renewcommand*{\thethm}{\arabic{section}.\arabic{thm}}

\section{Non-archimedean invariants} \label{sec:nonarch}

We introduce metrized graphs and their polarizations, and explain how a stable curve over a discrete valuation ring canonically gives rise to a polarized metrized graph (pm-graph). References for this section are for example \cite[Sections~3 and~4]{ci_eff} \cite[Section~1]{ya} \cite[Appendix]{zhadm} \cite[Section~4]{zhgs}. In this paper, a \emph{metrized graph}  is a connected compact metric space $\Gamma$ such that $\Gamma$ is either a point or for each $p \in \Gamma$ there exist a positive integer $n$ and $\eps \in \rr_{>0}$ such that $p$ possesses an open neighborhood $U$ together with an isometry $U \isom S(n,\eps)$, where $S(n,\eps)$ is the star-shaped set 
\[ S(n,\eps)= \{ z \in \cc \, : \,   \textrm{there exist} \,\, 0 \leq t < \eps \,\, \textrm{and}\, \, k \in \zz \,\, \textrm{such that} \,\, z = t e^{2 \pi i k/n} \} \, , \]
endowed with the path metric. If $\Gamma$ is a metrized graph, not a point, then for each $p \in \Gamma$ the integer $n$ is uniquely determined, and is called the \emph{valence} of $p$, notation $v(p)$. We set the valence of the unique point of the point-graph to be zero. Let $V_0 \subset \Gamma$ be the set of points $p \in \Gamma$ with $v(p) \neq 2$. Then $V_0$ is a finite subset of $\Gamma$, and we call any finite non-empty set $V \subset \Gamma$ containing $V_0$ a \emph{vertex set} of $\Gamma$.   

Let $\Gamma$ be a metrized graph and let $V$ be a vertex set of $\Gamma$. Then $\Gamma \setminus V$ has a finite number of connected components, each isometric with an open interval. The closure in $\Gamma$ of a connected component of $\Gamma \setminus V$ is called an \emph{edge} associated to $V$. We denote by $E$ the set of edges of $\Gamma$ resulting from the choice of $V$. When $e \in E$ is obtained by taking the closure in $\Gamma$ of the connected component $e^\circ$ of $\Gamma \setminus V$ we call $e^\circ$ the \emph{interior} of $e$. The assignment $e \mapsto e^\circ$ is unambiguous, given the choice of $V$, as we have $e^\circ = e \setminus V$. We call $e \setminus e^\circ$ the set of \emph{endpoints} of $e$. For example, assume $\Gamma$ is a circle, and say $V$ consists of $n>0$ points on $\Gamma$. Then $\Gamma \setminus V$ has $n$ connected components, and $\Gamma$ has $n$ edges. In general, an edge is homeomorphic to either a circle or a closed interval, and thus has either one endpoint or two endpoints. 

Let $e \in E$, and assume that $e^\circ$ is isometric with the open interval $(0,\ell(e))$. Then the positive real number $\ell(e)$ is well-defined and called the \emph{weight} of $e$. The total weight $\delta(\Gamma)=\sum_{e \in E} \ell(e)$ is called the \emph{volume} of $\Gamma$. We note that the volume $\delta(\Gamma)$ of a metrized graph $\Gamma$ is independent of the choice of a vertex set $V$. 

A divisor on $\Gamma$ is to be an element of $\zz^V$. A divisor on $\Gamma$ has a natural \emph{degree} in $\zz$. Assume we have fixed a map $\mathbf{q} \colon V \to \zz$. The associated \emph{canonical divisor} $K=K_\mathbf{q}$ is by definition the element $K \in \zz^V$ such that for all $p \in V$ the equality $K(p)=v(p)-2+2 \,\mathbf{q}(p)$ holds. We call the pair $\overline{\Gamma}=(\Gamma,\mathbf{q})$ a \emph{polarized metrized graph}, abbreviated \emph{pm-graph}, if $\mathbf{q}$ is non-negative, and the canonical divisor $K_\mathbf{q}$ is effective. Let $\overline{\Gamma}=(\Gamma, \mathbf{q})$ be a pm-graph with vertex set $V$. We call the integer
\[ g = g(\overline{\Gamma}) = \frac{1}{2}(\deg K +2 ) = b_1(\Gamma) + \sum_{p \in V} \mathbf{q}(p) \]
the \emph{genus} of $\overline{\Gamma}$. Here $b_1(\Gamma) \in \zz_{\geq0}$ is the first Betti number of $\Gamma$. We see that $g(\overline{\Gamma}) \in \zz_{\geq 1}$. We occasionally call $\mathbf{q}(p)$ the \emph{genus} of the vertex $p \in V$.

An edge $e \in E$ is called \emph{of type $0$} if removal of its interior results into a connected graph. Let $h \in [1,g/2]$ be an integer.  An edge $e \in E$ is called \emph{of type $h$} if removal of its interior yields the disjoint union of a pm-graph of genus $h$ and a pm-graph of genus $g-h$. The total weight of edges of type $0$ is denoted by $\delta_0(\overline{\Gamma})$, and the total weight of edges of type $h$ is denoted $\delta_h(\overline{\Gamma})$. We have $\delta(\Gamma) = \sum_{h=0}^{[g/2]} \delta_h(\overline{\Gamma})$. 

We refer to \cite{zhadm} for the definition of the admissible measure $\mu$ on $\Gamma$ associated to the divisor $K=K_\mathbf{q}$, and the admissible Green's function $g_\mu \colon \Gamma \times \Gamma \to \rr$. 
We will be interested in the following invariants, all introduced by Zhang \cite{zhadm} \cite{zhgs}. First of all, we consider the $\varphi$-invariant,
\begin{equation} \label{phi_nonarch} \varphi(\overline{\Gamma}) = -\frac{1}{4}\delta(\Gamma) + \frac{1}{4} \int_\Gamma g_\mu(x,x) ((10g+2) \mu(x) - \delta_K(x)) \, .  
\end{equation}
Next we consider the $\epsilon$-invariant,
\begin{equation} \epsilon(\overline{\Gamma}) = \int_\Gamma g_\mu(x,x)((2g-2) \mu(x) + \delta_K(x)) \, .  
\end{equation} 
Finally we consider the $\lambda$-invariant, 
\begin{equation} \label{lambda_nonarch}
\lambda(\overline{\Gamma}) =  \frac{g-1}{6(2g+1)} \varphi(\overline{\Gamma}) + \frac{1}{12}\left( \delta(\Gamma) + \epsilon(\overline{\Gamma})\right)\, . 
\end{equation}

Let $S=\{e_1,\ldots,e_n\}$ be a subset of $E$. We define $\Gamma_{ \{e_1 \} }$ to be the topological space obtained from $\Gamma$ by contracting the subspace $e_1$ to a point. Then $\Gamma_{ \{e_1 \} }$ has a natural structure of metrized graph, and the natural projection $\Gamma \to \Gamma_{ \{e_1\} }$ endows $\Gamma_{ \{e_1\} }$ with a designated vertex set, and maps each edge $e_i$ for $i=2,\ldots,n$ onto an edge of $\Gamma_{ \{e_1\} }$. Continuing by induction we obtain after $n$ steps a metrized graph $\Gamma_S$ with  natural projection $\pi \colon \Gamma \to \Gamma_S$ and designated vertex set $V_S$. The result is independent of the ordering of the edges in $S$ and is called the metrized graph obtained by \emph{contracting} the edges in $S$.

Consider the pushforward divisor $\pi_* K_\mathbf{q}$ on $\Gamma_S$. It is then clear that $\pi_* K_\mathbf{q}$ is effective and has the same degree as $K_{\mathbf{q}}$. The associated map $\mathbf{q}_S \colon V_S \to \zz$ is non-negative, and thus we obtain a pm-graph $\overline{\Gamma}_S=(\Gamma_S,\mathbf{q}_S)$ canonically determined by $S$. Clearly we have $g(\overline{\Gamma}_S)=g(\overline{\Gamma})$.
The pm-graph obtained by contracting all edges in $E \setminus S$ is denoted by $\overline{\Gamma}^S=(\Gamma^S,\mathbf{q}^S)$. 

Assume $\Gamma$ is not a point. When $\Gamma_1, \Gamma_2$ are subgraphs of $\Gamma$ such that $\Gamma=\Gamma_1 \cup \Gamma_2$ and $\Gamma_1 \cap \Gamma_2$ consists of one point, we say that $\Gamma$ is the \emph{wedge sum} of $\Gamma_1, \Gamma_2$, notation $\Gamma = \Gamma_1 \vee \Gamma_2$. By induction one has a well-defined notion of wedge sum $\Gamma_1 \vee \ldots \vee \Gamma_n$ of subgraphs $\Gamma_1,\ldots,\Gamma_n$ of $\Gamma$. We say that $\Gamma$ is \emph{irreducible} if the following holds: write $\Gamma = \Gamma_1 \vee \Gamma_2$ as a wedge sum. Then one of $\Gamma_1, \Gamma_2$ is a one-point graph. The graph $\Gamma$ has a unique decomposition $\Gamma=\Gamma_1 \vee \ldots \vee \Gamma_n$ as a wedge sum of irreducible subgraphs. We call the $\Gamma_i$ the \emph{irreducible components} of $\Gamma$. Each $\Gamma_i$ can be canonically seen as the contraction of some edges of $\Gamma$, and hence has a natural induced structure of pm-graph $\overline{\Gamma}_i$ of genus $g$, where $g=g(\overline{\Gamma})$ is the genus of $\overline{\Gamma}$. 

We call an invariant $\kappa=\kappa(\overline{\Gamma})$ of pm-graphs of genus $g$ \emph{additive} if the invariant $\kappa$ is compatible with decomposition into irreducible components. More precisely, let $\overline{\Gamma}$ be a pm-graph of genus $g$ and let $\overline{\Gamma} = \overline{\Gamma}_1 \vee \ldots \vee \overline{\Gamma}_n$ be its decomposition into irreducible components, where each $\overline{\Gamma}_i$ has its canonical induced structure of pm-graph of genus $g$. Then we should have $\kappa(\overline{\Gamma})= \kappa(\overline{\Gamma}_1) + \cdots + \kappa(\overline{\Gamma}_n)$. It is readily seen that each of the invariants $\delta_h(\overline{\Gamma})$ where $h=0,\ldots,[g/2]$ is additive on pm-graphs of genus $g$. By \cite[Theorem~4.3.2]{zhgs} the $\varphi$-invariant, the $\epsilon$-invariant  and the $\lambda$-invariant are all additive on pm-graphs of genus $g$.

Let $G=(V,E)$ be a connected graph (multiple edges and loops are allowed) and let $\ell \colon E \to \rr_{>0}$ be a function on the edge set $E$ of $G$. We then call the pair $(G,\ell)$ a \emph{weighted graph}. Let $(G,\ell)$ be a weighted graph. Then to $(G,\ell)$ one has naturally associated a metrized graph $\Gamma$ by glueing together finitely many closed intervals $I(e)=[0,\ell(e)]$, where $e$ runs through $E$, according to the vertex assignment map of $G$. Note that the resulting metrized graph $\Gamma$ comes equipped with a distinguished vertex set $V \subset \Gamma$.

Let $R$ be a discrete valuation ring and write $S=\Spec R$. Let $f \colon \xx \to S$ be a generically smooth stable curve of genus $g \geq 2$ over $S$. We can canonically attach a weighted graph $(G,\ell)$ to $f$ in the following manner. Let $C$ denote the geometric special fiber of $f$. Then the graph $G$ is to  be the dual graph of $C$. Thus the vertex set $V$ of $G$ is the set of irreducible components of $C$, and the edge set $E$ is the set of nodes of $C$. The incidence relation of $G$ is determined by sending a node $e$ of $C$ to the set of irreducible components of $C$ that $e$ lies on. Each $e \in E$ determines a closed point on $\xx$. We let $\ell(e) \in \zz_{>0}$ be its so-called \emph{thickness} on $\xx$. 

Let $\Gamma$ denote the metrized graph associated to $(G,\ell)$ with designated vertex set~$V$. We have a canonical map $\mathbf{q} \colon V \to \zz$ given by associating to $v \in V$ the geometric genus of the irreducible component $v$. The map $\mathbf{q}$ is non-negative, and the associated canonical divisor $K_\mathbf{q}$ is effective. We therefore obtain a canonical pm-graph $\overline{\Gamma}=(\Gamma,\mathbf{q})$ from $f$. The genus $g(\overline{\Gamma})$ is equal to the genus of the generic fiber of $f$. 

Let $J \colon S \to \overline{\mm}_g$ denote the classifying map to the moduli stack of stable curves of genus $g$ determined by $f$. For $h=0,\ldots,[g/2]$ we have canonical boundary divisors $\Delta_h$ on $\overline{\mm}_g$ whose generic points correspond to irreducible stable curves of genus~$g$ with one node (in the case $h=0$), or to reducible stable curves consisting of two irreducible components of genus $h$ and $g-h$, joined at one point (in the case $h>0$). Let $v$ denote the closed point of $S$. Then for each $h=0,\ldots, [g/2]$ we have the equality
\begin{equation} \delta_h(\overline{\Gamma}) = \mult_v \, J^* \Delta_h 
\end{equation}
in $\zz$, connecting the combinatorial structure of $\overline{\Gamma}$ with the geometry of $\overline{\mm}_g$.

\section{Archimedean invariants} \label{sec:arch}

In \cite{zhgs} Zhang introduces archimedean analogues of the $\varphi$-invariant and $\lambda$-invariant from (\ref{phi_nonarch}) and (\ref{lambda_nonarch}). Let $C$ be a compact and connected Riemann surface of genus $g \geq 2$. Let $\mathrm{H}^0(C,\omega_C)$ denote the space of holomorphic differentials on $C$, equipped with the hermitian inner product
\begin{equation} \label{defhodgemetric} (\alpha,\beta) \mapsto \frac{i}{2} \int_C \alpha \wedge \overline{\beta} \, . 
\end{equation}
We denote the resulting norm on $\det \mathrm{H}^0(C,\omega_C)$ by $\|\cdot\|_{\Hdg}$.
Choose an orthonormal basis $(\eta_1,\ldots,\eta_g)$ of $\mathrm{H}^0(C,\omega_C)$, and put
\[ \mu_C = \frac{i}{2g} \sum_{k=1}^g \eta_k \wedge \overline{\eta}_k \, , \]
following Arakelov in \cite{ar}. Then $\mu_C$ is a volume form on $C$. Let $\Delta_\mathrm{Ar}$ be the Laplacian operator on $L^2(C,\mu_C)$, i.e.\ the endomorphism of  $L^2(C,\mu_C)$ determined by setting
\[ \frac{ \partial \overline{\partial}}{\pi i} f = \Delta_\mathrm{Ar}(f) \cdot \mu_C  \]
for $f \in L^2(C,\mu_C)$. The differential operator $\Delta_\mathrm{Ar}$ is positive elliptic and hence has a discrete spectrum $0=\lambda_0<\lambda_1 \leq \lambda_2 \leq \ldots$ of real eigenvalues, where each eigenvalue occurs with finite multiplicity. Moreover, one has an orthonormal basis $(\phi_k)_{k=0}^\infty$ of $L^2(C,\mu_C)$ where $\phi_k$ is an eigenfunction of $\Delta_\mathrm{Ar}$ with eigenvalue $\lambda_k$ for each $k=0,1,2,\ldots$.  The $\varphi$-invariant $\varphi(C)$ of $C$ is then defined to be the real number
\begin{equation} \varphi(C) = \sum_{k>0} \frac{2}{\lambda_k} \sum_{m,n=1}^g \left|\int_C \phi_k \cdot \eta_m \wedge \overline{\eta}_n \right|^2 \, . 
\end{equation}
We note that this invariant was also introduced and studied independently by N.~Kawazumi in \cite{kaw}. One has $\varphi(C)>0$, see \cite[Corollary 1.2]{kaw} or \cite[Remark following Proposition 2.5.3]{zhgs}.

Let $\delta_F(C)$ be the delta-invariant of $C$ as defined by G.\ Faltings in \cite[p.~401]{fa}, and put $\delta(C)=\delta_F(C)-4g \log(2\pi)$. Then the $\lambda$-invariant $\lambda(C)$ of $C$ is defined to be the real number 
 \begin{equation} \label{lambda_arch}  \lambda(C) = \frac{g-1}{6(2g+1)} \varphi(C) + \frac{1}{12}\delta(C) \, . 
 \end{equation}
Note the similarity with (\ref{lambda_nonarch}). For fixed $g \geq 2$, both $\varphi$ and $\lambda$ are $C^\infty$ functions on the moduli space of curves $\mm_g(\cc)$. Some of their properties (for instance Levi form and asymptotic behavior near generic points of the boundary) are found in the references \cite{djtorus} \cite{djkzasympt} \cite{djnormal} \cite{djsecond} \cite{kaw}.

\section{Zhang's formulae for the height of the Gross-Schoen cycle} \label{sec:zhang_formulae}

The non-archimedean and archimedean $\varphi$- and $\lambda$-invariants as introduced in the previous two sections occur in \cite{zhgs} in formulae relating the height of a Gross-Schoen cycle on a curve over a global field with more traditional invariants, namely the self-intersection of the relative dualizing sheaf, and the Faltings height, respectively. The purpose of this section is to recall these formulae. In view of our applications, we will be solely concerned here with the number field case.

Let $k$ be a number field and let $X$ be a smooth projective geometrically connected curve of genus $g \geq 2$ defined over $k$. Let $\alpha \in \Div^1 X$ be a divisor of degree one on $X$. Following \cite[Section~1.1]{zhgs} we have an associated Gross-Schoen cycle $\Delta_{\alpha}$ in the rational Chow group $\CH^2(X^3)_\qq$. The cycle $\Delta_\alpha$ is homologous to zero, and has by \cite{gs} a well-defined Beilinson-Bloch height $\pair{\Delta_\alpha,\Delta_\alpha} \in \rr$. The height $\pair{\Delta_\alpha,\Delta_\alpha} $ vanishes if $\Delta_\alpha$ is rationally equivalent to zero.

Assume now that $X$ has semistable reduction over $k$. Let $\hat{\omega}$ denote the admissible  relative dualizing sheaf of $X$ from \cite{zhadm}, viewed as an adelic line bundle on $X$. Let $ \pair{\hat{\omega},\hat{\omega} } \in \rr$ be its self-intersection as in \cite{zhadm}.  
Let $O_k$ be the ring of integers of $k$. Denote by $M(k)_0$ the set of finite places of $k$, and by $M(k)_\infty$ the set of complex embeddings of $k$. We set $M(k) = M(k)_0 \sqcup M(k)_\infty$. For $v \in M(k)_0$ we set $Nv$ to be the norm of the residue field of $O_k$ at $v$, and for $v \in M(k)_\infty$ we set $Nv=1$. 

Let $S=\Spec O_k$ and let $f \colon \xx \to S$ denote the stable model of $X$ over $S$. For $v \in M(k)_0$ we denote by $\varphi(X_v)$ the $\varphi$-invariant of the pm-graph of genus $g$ canonically associated to the base change of $f \colon \xx \to S$ along the inclusion $O_k \to O_{k,v}$. For $v \in M(k)_\infty$ we denote by $\varphi(X_v)$ the $\varphi$-invariant of the compact and connected Riemann surface $X_v = X \otimes_v \cc$ of genus $g$.

Let $x_\alpha$ be the class of the divisor $\alpha - K_X/(2g-2)$ in $\Pic^0(X)_\qq$, where $K_X$ is a canonical divisor on $X$. Let $\hat{\h}$ denote the canonical N\'eron-Tate height on $\Pic^0(X)_\qq$. With these notations Zhang has proved the following identity \cite[Theorem~1.3.1]{zhgs}.
\begin{thm} \label{zhang_main} (S. Zhang \cite{zhgs}) Let $X$ be a smooth projective geometrically connected curve of genus $g \geq 2$ defined over the number field $k$. Let $\alpha \in \Div^1 X$ be a divisor of degree one on $X$, and assume that $X$ has semistable reduction over $k$.  Then the equality
\[ \pair{\Delta_{\alpha},\Delta_{\alpha}} = \frac{2g+1}{2g-2} \pair{\hat{\omega},\hat{\omega} }- \sum_{v \in M(k)} \varphi(X_v) \log Nv  + 12 (g-1)\, [k\colon \qq] \, \hat{\h}(x_\alpha)  \]
holds in $\rr$.
\end{thm}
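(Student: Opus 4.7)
The plan is to proceed in two stages: first analyze how $\langle \Delta_\alpha, \Delta_\alpha \rangle$ depends on the choice of $\alpha$, isolating the quadratic dependence on $x_\alpha$; then prove the identity for one specific canonical choice of $\alpha$ using Zhang's adelic arithmetic intersection theory. Directly from the definition of $\Delta_\alpha$ as an alternating sum of modified diagonals on $X^3$ built from $\alpha$, the cycle depends on $\alpha$ as an affine function whose variation lies in the image of pullbacks from $X$ and $X^2$. Bilinearity of the Beilinson-Bloch pairing together with the projection formula then show that $\alpha \mapsto \langle \Delta_\alpha, \Delta_\alpha \rangle$ is a quadratic polynomial in $x_\alpha \in \Pic^0(X)_\qq$; computing its quadratic part by differentiating twice and comparing with the canonical N\'eron-Tate pairing identifies it with $12(g-1)[k:\qq]\hat{\h}(x_\alpha)$, which is the origin of the last term in the formula.

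It then suffices to establish the identity for one canonical representative $\alpha$ with $x_\alpha = 0$. For this I would invoke Zhang's adelic theory: the admissible relative dualizing sheaf $\hat{\omega}$ equips $X_v$ at each place $v$ with an admissible Green's function $g_\mu$ (on the pm-graph $\overline{\Gamma}_v$ for $v \in M(k)_0$, on the Riemann surface $X_v(\cc)$ for $v \in M(k)_\infty$), and the global Beilinson-Bloch pairing of a null-homologous cycle decomposes as a sum over $M(k)$ of local intersection numbers computed with these Green's functions. Iterated application of the projection formula to the projections $X^3 \to X^2 \to X$ reduces each local contribution to an integral over $X_v$ of $g_\mu(x,x)$ paired against a combination of the admissible measure $\mu_v$ and the Dirac current $\delta_{K_v}$ of the canonical divisor.

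The core of the argument is the local identity
\[ \langle \Delta_\alpha, \Delta_\alpha \rangle_v = \frac{2g+1}{2g-2} \langle \hat{\omega}, \hat{\omega} \rangle_v - \varphi(X_v) \log Nv . \]
At a non-archimedean place this is a combinatorial identity on $\overline{\Gamma}_v$: repeated applications of Stokes' theorem for metric graphs collapse the intersection number into the integral of $g_\mu(x,x)$ against $(10g+2)\mu - \delta_K$, which is precisely $4 \varphi(\overline{\Gamma}_v)$ up to a $-\tfrac14 \delta(\Gamma)$ term, matching formula (\ref{phi_nonarch}). At an archimedean place the analogous manipulation on $X_v(\cc)^3$ produces an integral which, after expanding $g_\mu(x,x)$ in the eigenbasis $(\phi_k)$ of $\Delta_{\Ar}$ and pairing with the Hodge forms $\eta_m \wedge \overline{\eta}_n$, reproduces exactly the spectral series defining $\varphi(X_v)$.

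The main obstacle is performing this local computation cleanly: the Gross-Schoen cycle is a delicate alternating sum on $X^3$ and the contributions of the various partial diagonals, together with the corrections needed to make the cycle admissibly intersectable against itself, must cancel precisely down to a single $\varphi$-invariant plus a multiple of the local self-intersection of $\hat{\omega}$. Controlling this cancellation requires careful bookkeeping of the combinatorial coefficients coming from the triple product, and at non-archimedean places it also requires translating the intersection-theoretic computation on an arithmetic model of $\xx^3$ into a manipulation purely on the pm-graph $\overline{\Gamma}_v$ via the compatibility of admissible Green's functions with specialization. Once the local identity is established at every $v$, summing over $M(k)$ and adding back the N\'eron-Tate term from the first stage yields the claimed formula.
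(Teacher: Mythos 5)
The paper does not prove this statement at all: it is quoted verbatim from Zhang (\cite[Theorem~1.3.1]{zhgs}), so there is no internal proof to compare yours against; the relevant question is whether your outline would actually reconstruct Zhang's theorem. Structurally it follows the same architecture as Zhang's argument (separate the dependence on $\alpha$, then compute the height of a canonical cycle by adelic local intersection theory with admissible Green's functions, with $\varphi(X_v)$ emerging from the local computations), but as a proof it has genuine gaps at exactly the two places where the real work lies.

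First, the reduction to $x_\alpha=0$: asserting that $\alpha\mapsto\pair{\Delta_\alpha,\Delta_\alpha}$ is quadratic in $x_\alpha$ and that its quadratic part ``is identified by differentiating twice'' with $12(g-1)[k\colon\qq]\,\hat{\h}(x_\alpha)$ is not an argument. One must write the class $\Delta_\alpha-\Delta_\xi$ in $\CH^2(X^3)_\qq$ explicitly in terms of $x_\alpha$ (this is a Gross--Schoen-type computation), prove that the cross term with $\Delta_\xi$ and the linear part vanish, and compute the coefficient $12(g-1)$ by evaluating Beilinson--Bloch pairings of the correction cycles against each other; this step is where the relation between the $\CH^2$ height pairing and the N\'eron--Tate height on $\Pic^0$ enters, and it is entirely missing. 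Second, the asserted place-by-place identity $\pair{\Delta_\alpha,\Delta_\alpha}_v=\frac{2g+1}{2g-2}\pair{\hat\omega,\hat\omega}_v-\varphi(X_v)\log Nv$ does not follow from ``repeated applications of Stokes' theorem'' plus bookkeeping: the Beilinson--Bloch height is defined via a good regular model of $X^3$ (whose existence is itself an issue, handled by base change), and a canonical decomposition into local terms requires Zhang's admissible adelic intersection theory; showing that the alternating sum of partial diagonals collapses to precisely the integral of $g_\mu(x,x)$ against $(10g+2)\mu-\delta_K$ at finite places, and to the spectral series for $\varphi$ at archimedean places, is the technical heart of Zhang's paper and is here only promised, not performed. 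So the proposal is a plausible plan consistent with Zhang's route, but it does not constitute a proof of the theorem.
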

 We see from Theorem \ref{zhang_main} that for fixed $X$, the height $\pair{\Delta_\alpha,\Delta_\alpha}$ attains its minimum precisely when $x_\alpha$ is zero in $\Pic^0(X)_\qq$. We refer to $\Delta_\alpha$ where $x_\alpha$ is zero as a \emph{canonical} Gross-Schoen cycle. Also, by Theorem \ref{zhang_main}, the non-negativity of the height of a canonical Gross-Schoen cycle (as predicted by standard arithmetic conjectures of Hodge Index type \cite{giso})  is equivalent to the lower bound
\begin{equation} \label{conj}  (?) \qquad \pair{\hat{\omega},\hat{\omega} } \geq \frac{2g-2}{2g+1} \sum_{v \in M(k)} \varphi(X_v) \log Nv  
\end{equation}
for the self-intersection of the admissible relative dualizing sheaf. 

We recall that the strict inequality $\pair{\hat{\omega},\hat{\omega} } > 0$ is equivalent to the Bogomolov conjecture for $X$, canonically embedded in its jacobian. A conjecture by Zhang \cite[Conjecture~4.1.1]{zhgs}, proved by Z.~Cinkir \cite[Theorem~2.9]{ci_eff}, implies that for $v \in M(k)_0$ one has $\varphi(X_v) \geq 0$. As $\varphi(X_v)>0$ for $v \in M(k)_\infty$ we find that the right hand side of (\ref{conj}) is strictly positive. Hence, the non-negativity of the height of a canonical Gross-Schoen cycle implies the Bogomolov conjecture for $X$.

We mention that in \cite[Corollary~1.4]{djnt} it is shown unconditionally that the inequality
\[   \pair{\hat{\omega},\hat{\omega} } \geq \frac{2}{3g-1}  \sum_{v \in M(k)} \varphi(X_v) \log Nv  \]
holds. This inequality is weaker than (\ref{conj}) if $g \geq 3$ but still implies the Bogomolov conjecture for $X$.

We next discuss the connection with the Faltings height.
Let $(\ll, (\|\cdot\|_v)_{v \in M(k)_\infty})$ be a metrized line bundle on $S$. Its arithmetic degree is given by choosing a non-zero rational section $s$ of $\ll$ and by setting
\begin{equation} \label{arithm_degree} 
\deg \left( \ll, (\|\cdot\|_v)_{v \in M(k)_\infty} \right)  = \sum_{v \in M(k)_0} \ord_v (s) \log Nv - \sum_{v \in M(k)_\infty} \log \|s\|_v \, . 
\end{equation}
The arithmetic degree is independent of the choice of section $s$, by the product formula. As before let $f \colon \xx \to S$ denote the stable model of $X$ over $S$. Let $\omega_{\xx/S}$ denote the relative dualizing sheaf on $\xx$. We endow the line bundle $\det f_*\omega_{\xx/S}$ on $S$ with the metrics $\|\cdot\|_{\Hdg, v}$ at the infinite places determined by the inner product in (\ref{defhodgemetric}). The resulting metrized line bundle is denoted $\det f_*\bar{\omega}_{\xx/S}$. Its arithmetic degree $\deg \det f_*\bar{\omega}_{\xx/S}$ is the (non-normalized) stable Faltings height of $X$.

Let $\pair{\bar{\omega},\bar{\omega} } $ denote the Arakelov self-intersection of the relative dualizing sheaf on $\xx$. The Noether formula \cite[Theorem~6]{fa} \cite[Th\'eor\`eme~2.5]{mb} then states that
\begin{equation} \label{noether}  12  \deg \det f_*\bar{\omega}_{\xx/S} = \pair{\bar{\omega},\bar{\omega} } + \sum_{v \in M(k)} \delta(X_v) \log Nv  \, . 
\end{equation}
Here, for $v \in M(k)_0$ we denote by $\delta(X_v)$ the volume of the pm-graph associated to the base change of $f \colon \xx \to S$ along the inclusion $O_k \to O_{k,v}$, and for $v \in M(k)_\infty$ we denote by $\delta(X_v)=\delta_F(X_v)-4g \log(2\pi)$ the (renormalized) delta-invariant of the compact and connected Riemann surface $X_v = X \otimes_v \cc$ of genus $g$.

Similarly to $\varphi(X_v)$ and $\delta(X_v)$ one also defines $\epsilon(X_v)$ (for $v \in M(k)_0$) and $\lambda(X_v)$. The Arakelov self-intersection of the relative dualizing sheaf on $\xx$ and the self-intersection of the admissible relative dualizing sheaf of $X$ are related by the identity
\begin{equation} \label{omegasq}  \pair{\bar{\omega},\bar{\omega} }  = \pair{\hat{\omega},\hat{\omega} } + \sum_{v \in M(k)_0} \epsilon(X_v)\log Nv \, , 
\end{equation}
cf. \cite[Theorem~4.4]{zhadm}. 
Combining Theorem \ref{zhang_main} with (\ref{lambda_nonarch}), (\ref{lambda_arch}), (\ref{noether}) and (\ref{omegasq}) we find the following alternative formula for the height of a canonical Gross-Schoen cycle  (cf. \cite[Equation~1.4.2]{zhgs}).
\begin{cor} \label{zhang} Let $X$ be a smooth projective geometrically connected curve of genus $g \geq 2$ defined over the number field $k$. Assume that $X$ has semistable reduction over $k$. Let $\Delta \in \CH^2(X^3)_\qq$ be a canonical Gross-Schoen cycle on $X^3$. Then the equality
\[ \pair{\Delta,\Delta} = \frac{6(2g+1)}{g-1} \left(  \deg \det f_* \bar{\omega}_{\xx/S} - \sum_{v \in M(k)} \lambda(X_v) \log Nv \right)  \]
holds in $\rr$.
\end{cor}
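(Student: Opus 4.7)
The corollary is a direct algebraic combination of Theorem \ref{zhang_main} with the Noether formula (\ref{noether}), the identity (\ref{omegasq}) comparing the Arakelov and admissible self-intersections, and the two definitions (\ref{lambda_nonarch}) and (\ref{lambda_arch}) of the $\lambda$-invariant. The plan is simply to carry this combination out and collect terms.

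I would begin by noting that since $\Delta$ is canonical, the class $x_\alpha$ vanishes in $\Pic^0(X)_\qq$, so the N\'eron-Tate term in Theorem \ref{zhang_main} drops out. Substituting (\ref{omegasq}) into the resulting expression replaces $\pair{\hat{\omega},\hat{\omega}}$ by $\pair{\bar{\omega},\bar{\omega}} - \sum_{v \in M(k)_0} \epsilon(X_v)\log Nv$, and the Noether formula (\ref{noether}) then replaces $\pair{\bar{\omega},\bar{\omega}}$ by $12 \deg\det f_*\bar{\omega}_{\xx/S} - \sum_{v \in M(k)} \delta(X_v)\log Nv$. Since $\frac{2g+1}{2g-2}\cdot 12 = \frac{6(2g+1)}{g-1}$, the Faltings height $\deg\det f_*\bar{\omega}_{\xx/S}$ already appears with the claimed prefactor.

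It then remains to verify that at every place $v$ the local combination of $\varphi$-, $\delta$- and $\epsilon$-contributions assembles into $-\frac{6(2g+1)}{g-1}\lambda(X_v)\log Nv$. Setting $\tilde{\delta}(X_v) = \delta(X_v) + \epsilon(X_v)$ for $v \in M(k)_0$ and $\tilde{\delta}(X_v) = \delta(X_v)$ for $v \in M(k)_\infty$, the definitions (\ref{lambda_nonarch}) and (\ref{lambda_arch}) can be rewritten uniformly as $12\lambda(X_v) = \frac{2(g-1)}{2g+1}\varphi(X_v) + \tilde{\delta}(X_v)$. Multiplying by $\frac{2g+1}{2g-2}$ yields $\frac{6(2g+1)}{g-1}\lambda(X_v) = \varphi(X_v) + \frac{2g+1}{2g-2}\tilde{\delta}(X_v)$, which is precisely the combination needed to collapse the remaining local terms into a single $\lambda$-contribution.

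I do not anticipate a genuine obstacle beyond coefficient bookkeeping; the only subtle point is the asymmetry whereby $\epsilon(X_v)$ appears only at finite places, and this is absorbed exactly by the distinct but parallel forms of the definition of $\lambda$ at archimedean and non-archimedean places.
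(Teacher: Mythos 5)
Your proposal is correct and is exactly the paper's argument: the paper derives Corollary \ref{zhang} by combining Theorem \ref{zhang_main} (with the N\'eron--Tate term vanishing for a canonical cycle) with (\ref{omegasq}), (\ref{noether}), and the definitions (\ref{lambda_nonarch}), (\ref{lambda_arch}), precisely as you do. Your coefficient bookkeeping, including the identity $\frac{6(2g+1)}{g-1}\lambda(X_v) = \varphi(X_v) + \frac{2g+1}{2g-2}\tilde{\delta}(X_v)$ absorbing the $\epsilon$-term at finite places, checks out.
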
 

\section{The $\lambda$-invariants for some pm-graphs} \label{sec:some_lambda}

The purpose of this section is to display the $\lambda$-invariants of a few pm-graphs that we will encounter in the sequel. We refer to the papers \cite{ci_adm} \cite{ci_comp}  \cite{ci_eff} by Z.\ Cinkir for an extensive study of the $\varphi$- and $\lambda$-invariants of pm-graphs. The reference \cite{ci_adm} focuses in particular on pm-graphs of genus three. 

Let $\Gamma$ be a metrized graph. Let $r(p,q)$ denote the effective resistance between points $p, q \in \Gamma$. Fix a point $p \in \Gamma$.  We then put
\begin{equation} \label{tau} \tau(\Gamma) =\frac{1}{4} \int_{\Gamma}\left(\frac{\partial}{\partial x} r(p,x)\right)^2 \d x \, , 
\end{equation}
where $\d x$ denotes the (piecewise) Lebesgue measure on $\Gamma$. 

By \cite[Lemma~2.16]{cr} the number $\tau(\Gamma)$ is independent of the choice of $p \in \Gamma$. It is readily verified that for a circle $\Gamma$ of length $\delta(\Gamma)$ we have $\tau(\Gamma)=\frac{1}{12}\delta(\Gamma)$, and for a line segment of length $\delta(\Gamma)$ we have $\tau(\Gamma)=\frac{1}{4}\delta(\Gamma)$. The $\tau$-invariant is an additive invariant. 

Now let $\overline{\Gamma}=(\Gamma,\mathbf{q})$ be a pm-graph of genus $g$, with vertex set $V$, and canonical divisor $K$. We set
\[ \theta(\overline{\Gamma}) = \sum_{p, q \in V} (v(p)-2+2\,\mathbf{q}(p))(v(q)-2+2\,\mathbf{q}(q))\, r(p,q) \, .  \]
The next proposition, due to Cinkir, expresses $\lambda(\overline{\Gamma})$ in terms of $\tau(\Gamma)$, $\theta(\overline{\Gamma})$ and the volume $\delta(\Gamma)$. 
\begin{prop}  \label{cinkir} Let $\overline{\Gamma}$ be a pm-graph of genus~$g$. Then the equality
\[  (8g+4) \lambda(\overline{\Gamma}) = 6(g-1)\tau(\Gamma) + \frac{ \theta(\overline{\Gamma})}{2} + \frac{g+1}{2} \delta(\Gamma)  \]
holds in $\rr$.
\end{prop}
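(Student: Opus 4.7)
The plan is to start from the definition (\ref{lambda_nonarch}) of $\lambda(\overline{\Gamma})$,
\[ \lambda(\overline{\Gamma}) = \frac{g-1}{6(2g+1)}\varphi(\overline{\Gamma}) + \frac{1}{12}\bigl(\delta(\Gamma) + \epsilon(\overline{\Gamma})\bigr), \]
and to substitute into it explicit formulas expressing $\varphi(\overline{\Gamma})$ and $\epsilon(\overline{\Gamma})$ individually in terms of $\tau(\Gamma)$, $\theta(\overline{\Gamma})$ and $\delta(\Gamma)$. Such formulas are the main technical results of Cinkir in the papers \cite{ci_adm} and \cite{ci_eff}, and the proposition is really a packaged algebraic combination of them. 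Multiplying the definition through by $8g+4 = 4(2g+1)$ gives the identity
\[ (8g+4)\lambda(\overline{\Gamma}) = \tfrac{2(g-1)}{3}\varphi(\overline{\Gamma}) + \tfrac{2g+1}{3}\bigl(\delta(\Gamma)+\epsilon(\overline{\Gamma})\bigr), \]
into which one plugs Cinkir's two input identities and then collects the coefficients of $\tau(\Gamma)$, $\theta(\overline{\Gamma})$ and $\delta(\Gamma)$. The bookkeeping produces exactly $6(g-1)\tau(\Gamma) + \tfrac{1}{2}\theta(\overline{\Gamma}) + \tfrac{g+1}{2}\delta(\Gamma)$, as asserted.

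A conceptual simplification that makes the substitution clean is additivity. The invariants $\varphi$, $\epsilon$, $\lambda$, $\delta$ and $\tau$ are all additive on pm-graphs of genus $g$ (see the discussion in Section \ref{sec:nonarch}, together with \cite[Theorem~4.3.2]{zhgs}), and $\theta$ is additive as well because the effective resistance $r(p,q)$ is additive along a cut-vertex of $\Gamma$ and the canonical divisor $K_{\mathbf{q}}$ decomposes compatibly under the wedge-sum decomposition into irreducible components. Consequently it suffices to verify the identity on irreducible pm-graphs, on which the admissible measure $\mu$ and the admissible Green's function $g_\mu$ can be computed explicitly via electrical network methods, and the integrals defining $\varphi$ and $\epsilon$ reduce to closed expressions in edge lengths, vertex valences and vertex genera.

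The main obstacle, if one were to reprove the statement from scratch rather than invoke Cinkir, is the derivation of the two input formulas for $\varphi(\overline{\Gamma})$ and $\epsilon(\overline{\Gamma})$ separately. Each of these requires a careful computation of double integrals of the admissible Green's function, and in particular a rearrangement using the identity expressing $g_\mu(x,x)$ in terms of the resistance $r(x,\cdot)$ against the canonical divisor $K_{\mathbf{q}}$. Once those two formulas are in hand, Proposition \ref{cinkir} is just an elementary algebraic identity, and the proof will amount essentially to citing \cite{ci_adm}, \cite{ci_eff} and performing the substitution.
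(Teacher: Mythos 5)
Your proposal is correct and in essence coincides with the paper's argument: the paper proves Proposition \ref{cinkir} simply by citing \cite[Corollary~4.4]{ci_eff}, which is exactly this identity. Your route of substituting Cinkir's separate formulas for $\varphi(\overline{\Gamma})$ and $\epsilon(\overline{\Gamma})$ into the definition (\ref{lambda_nonarch}) and collecting terms is just a mild unpacking of that same citation, so there is nothing to add.
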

\begin{proof} See \cite[Corollary~4.4]{ci_eff}.
\end{proof}
We will need the following particular cases.
\begin{ex} \label{loop} Let $\overline{\Gamma}$ be a pm-graph of genus $g$ consisting of one vertex of genus $g-1$ and with one loop attached of length $\delta(\Gamma)$. Then $\tau(\Gamma)=\frac{1}{12}\delta(\Gamma)$, $\theta(\overline{\Gamma})=0$ and hence $(8g+4)\lambda(\overline{\Gamma})=g\,\delta(\Gamma)$.
\end{ex}
\begin{ex} \label{segment} Let $\overline{\Gamma}$ be a pm-graph of genus $g$ consisting of two vertices of genera $h$ and $g-h$ joined by one edge of length $\delta(\Gamma)$. Then $\tau(\Gamma)=\frac{1}{4}\delta(\Gamma)$, $\theta(\overline{\Gamma})=2(2h-1)(2g-2h-1)\delta(\Gamma)$ and hence $(8g+4)\lambda(\overline{\Gamma})=4h(g-h)\delta(\Gamma)$.
\end{ex}
\begin{ex} \label{tree} Let $\overline{\Gamma}$ be a polarized metrized tree of genus $g$. Then we have
\[  (8g+4) \lambda(\overline{\Gamma}) = \sum_{h=1}^{[g/2]} 4h(g-h) \delta_h (\overline{\Gamma}) \, .  \]
This follows from the additivity of the $\lambda$-invariant  and Example \ref{segment}.
\end{ex}
\begin{ex}  \label{lambda_for_two_gon} Let $\overline{\Gamma}$ be a pm-graph of genus $g$ consisting of two vertices of genera $h$ and $g-h-1$ and joined by two edges of weights $m_1, m_2$.  We have
\[ \tau(\Gamma) = \frac{1}{12}\delta(\Gamma) = \frac{1}{12}(m_1+m_2) \, , \quad \theta(\overline{\Gamma}) =  \frac{8m_1m_2}{m_1+m_2} (g-h-1)h \, , \]
and hence
\[ (8g+4)  \lambda(\overline{\Gamma}) = \frac{4m_1m_2}{m_1+m_2} (g-h-1)h + g(m_1+m_2) \, . \]
\end{ex}

\section{Algebraic and analytic modular forms} \label{sec:modular_form}

References for this section are \cite{dm}, \cite[Chapter~V]{fc}, \cite{geer} and \cite{lrz}. Let $g \geq 1$ be an integer. Let $\aa_g$ be the moduli stack of principally polarized abelian varieties of dimension $g$, and denote by $p \colon \uu_g \to \aa_g$ the universal abelian variety. Let $\Omega_{\uu_g/\aa_g}$ denote the sheaf of relative $1$-forms of $p$. Then we have the Hodge bundle $\ee=p_* \Omega_{\uu_g/\aa_g}$ and its determinant $\ll = \det p_* \Omega_{\uu_g/\aa_g}$ on $\aa_g$. Kodaira-Spencer deformation theory gives a canonical isomorphism 
\begin{equation} \label{KS-ab-schemes} \Sym^2 \ee \isom \Omega_{\aa_g/\zz} 
\end{equation}
of locally free sheaves on $\aa_g$, see e.g.\ \cite[Section~III.9]{fc}.

For all commutative rings $R$ and all $h \in \zz_{\geq 0}$ we let 
\[ \ss_{g,h}(R) = \Gamma(\aa_g \otimes R, \ll^{\otimes h})  \] 
denote the $R$-module of algebraic Siegel modular forms of degree~$g$ and weight~$h$. 

Let $\mathbb{H}_g$ denote Siegel's upper half space of degree~$g$. We have a natural uniformization map $u \colon \mathbb{H}_g \to \aa_g(\cc)$ and hence a universal abelian variety $\tilde{p} \colon \UU_g \to \mathbb{H}_g$ over $\mathbb{H}_g$. The Hodge bundle $\tilde{\ee}=\tilde{p}_*\Omega_{\UU_g/\mathbb{H}_g}$ over $\mathbb{H}_g$ has a standard trivialization by the frame $(\d \zeta_1/\zeta_1,\ldots, \d \zeta_g /\zeta_g) = (2\pi i \,\d z_1,\ldots, 2\pi i \,\d z_g)$, where $\zeta_i = \exp(2\pi i z_i)$. In particular, the determinant of the Hodge bundle  $\tilde{\ll} = \det \tilde{\ee}$ is trivialized by the frame $\omega = \frac{\d \zeta_1}{\zeta_1} \wedge \ldots \wedge \frac{\d \zeta_g}{\zeta_g} = (2\pi i)^g (\d z_1 \wedge \ldots \wedge \d z_g)$.  Let $\mathcal{R}_{g,h}$ denote the usual $\cc$-vector space of analytic Siegel modular forms of degree $g$ and weight $h$. 
Then the map 
\[ \ss_{g,h}(\cc) \to \mathcal{R}_{g,h} \, , \quad s \mapsto \tilde{s} = s \cdot \omega^{\otimes -h} = (2\pi i)^{-gh} \cdot s \cdot (\d z_1 \wedge \ldots \wedge \d z_g)^{\otimes - h} \]
is a linear isomorphism. 

The Hodge metric $\|\cdot\|_\Hdg$ on the Hodge bundle $\tilde{\ee}$ is the metric induced by the standard symplectic form on the natural variation of Hodge structures underlying the local system $R^1\tilde{p}_* \zz_{\UU_g}$ on $\mathbb{H}_g$. The natural induced metric on $\tilde{\ll}$  is given by 
\begin{equation} \label{norm_frame}  \|   \d z_1 \wedge \ldots \wedge \d z_g \|_\Hdg(\Omega) = \sqrt{ \det \Im \Omega} 
\end{equation}
for all $\Omega \in \mathbb{H}_g$. The Hodge metrics $\|\cdot\|_{\Hdg}$ on $\tilde{\ee}$ resp.\ $\tilde{\ll}$ descend to give metrics, that we also denote by $\|\cdot\|_{\Hdg}$, on the bundles $\ee$ resp.\ $\ll$ on $\aa_g(\cc)$. Explicitly, let $(A,a) \in \aa_g(\cc)$ be a complex principally polarized abelian variety of dimension~$g$, then we have the identity
\begin{equation} \label{def_norm}   \|s\|_\Hdg(A,a) = (2\pi)^{gh} \cdot |\tilde{s}| (\Omega) \cdot (\det \Im \Omega)^{h/2}  
\end{equation}
for $s \in  \ss_{g,h}(\cc) $ corresponding to $\tilde{s} \in \mathcal{R}_{g,h}$. Here $\Omega$ is any element of $\mathbb{H}_g$ satisfying $u(\Omega)=(A,a)$.

Assume now that $g \geq 2$, and denote by $\mm_g$ the moduli stack of smooth proper curves of genus $g$. The Torelli map $t \colon \mm_g \to \aa_g$ gives rise to the bundles $t^*\ee$ and $t^*\ll$ on $\mm_g$. Let $\pi \colon \mathcal{C}_g \to \mm_g$ denote the universal curve of genus~$g$, and denote by $\Omega_{\mathcal{C}_g/\mm_g}$ its sheaf of relative $1$-forms. Then we have locally free sheaves $\ee_\pi = \pi_* \Omega_{\mathcal{C}_g/\mm_g}$ and $\ll_\pi = \det \ee_\pi$ on $\mm_g$, and natural identifications $\ee_\pi \isom t^*\ee$ and $\ll_\pi \isom t^*\ll$. Kodaira-Spencer deformation theory gives a canonical isomorphism 
\begin{equation} \label{KS-curves} \pi_*   \Omega_{\mathcal{C}_g/\mm_g}^{\otimes 2} \isom \Omega_{\mm_g/\zz} 
\end{equation}
of locally free sheaves on $\mm_g$.

Over $\cc$, the pullback of the Hodge metric $\|\cdot\|_{\Hdg}$ to $\ll_\pi$ coincides with the metric derived from the inner product (\ref{defhodgemetric}) introduced before.  

Let  $\overline{\mm}_g \supset \mm_g$ denote the moduli stack of stable curves of genus $g$, and consider the universal stable curve $\bar{\pi} \colon \overline{\mathcal{C}}_g \to \overline{\mathcal{M}}_g$. Let $\omega_{\overline{\mathcal{C}}_g/\overline{\mathcal{M}}_g}$ be the relative dualizing sheaf of $\bar{\pi}$, and put $\ee_{\bar{\pi}}= \bar{\pi}_*\omega_{\overline{\mathcal{C}}_g/\overline{\mathcal{M}}_g}$ and $\ll_{\bar{\pi}}= \det \ee_{\bar{\pi}}$. Then $\ee_{\bar{\pi}}$ resp.\ $\ll_{\bar{\pi}}$ are natural extensions of $\ee_\pi$ resp.\ $\ll_\pi$ over $\overline{\mm}_g$. When $S$ is a scheme or analytic space and $f \colon \xx \to S$ is a stable curve of genus $g$, we usually denote by $\ee_f = f_* \omega_{\xx/S}$ and $\ll_f = \det f_* \omega_{\xx/S}$ the sheaves on $S$ induced from $\ee_{\bar{\pi}}$ and $\ll_{\bar{\pi}}$ by the classifying map $J \colon S \to \overline{\mm}_g$ associated to $f$.

\begin{lem} \label{asympt_hdg} Let $f \colon \xx \to \DD$ be a stable curve of genus $g \geq 2$ over the open unit disk $\DD$. Assume that $f$ is smooth over $\DD^*$. Let $s $ be a Siegel modular form over $\DD^*$ of degree~$g$ and weight~$h$. Let $\Gamma \subset \Sp(2g,\zz)$ denote the image of the monodromy representation $\rho \colon \pi_1(\DD^*) \to \Sp(2g,\zz)$ induced by $f$, and let  $\Omega \colon \DD^* \to \mathbb{H}_g/\Gamma$ denote the induced period map. Then the frame $ \Omega^*(\d z_1 \wedge \ldots \wedge \d z_g) $ of $\ll_f|_{\DD^*}$ extends as a frame of $\ll_f$ over $\DD$. Furthermore, we have the asymptotics
\[  - \log \|s\|_\Hdg \sim - \ord_0(s,\ll_f)\log|t| - \frac{h}{2}\log \det \Im \Omega(t)   \]
as $t \to 0$ in $\DD^*$. The notation $\sim$ means that the difference between left and right hand side remains bounded. The term $\log \det \Im \Omega(t) $ is of order $O(\log(-\log|t|))$ and is in fact of order $O(1)$ if the special fiber $X_0$ is a stable curve of compact type. 
\end{lem}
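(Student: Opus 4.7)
The plan is to establish three things in sequence: (a) the extension of $e := \Omega^*(\d z_1 \wedge \ldots \wedge \d z_g)$ from a frame of $\ll_f|_{\DD^*}$ to a frame of $\ll_f$ over the whole disk $\DD$; (b) the asymptotic identity for $-\log\|s\|_\Hdg$; and (c) the growth estimate on $\log \det \Im \Omega(t)$, sharpened in the compact type case.

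For (a) I would appeal to the theory of canonical (Deligne) extensions for the variation of Hodge structure $R^1 f_* \cc|_{\DD^*}$. Stability of $f$ ensures that the monodromy $T$ is unipotent, $T = \exp(N)$ with $N$ nilpotent, and by a classical identification (Deligne, Steenbrink, Faltings--Chai) the stable pushforward $f_*\omega_{\xx/\DD}$ agrees canonically with the Deligne extension of the Hodge sub-bundle $F^1 \subset R^1 f_* \cc \otimes \oo_{\DD^*}$. Since $T$ acts on $\HH_g$ by the translation $\Omega \mapsto \Omega + N$, the induced automorphy factor on $\tilde \ll$ is trivial, so $\d z_1 \wedge \ldots \wedge \d z_g$ is monodromy invariant and descends to a nowhere vanishing section of $\ll_f|_{\DD^*}$; the defining characterization of the canonical extension then forces this frame to extend across $t = 0$.

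For (b), use part (a) to write $s|_{\DD^*} = c(t) \cdot e^{\otimes h}$ with $c$ meromorphic on $\DD$ satisfying $\ord_{t=0} c = \ord_0(s, \ll_f)$. Formula (\ref{norm_frame}), pulled back along $\Omega$, gives $\|e\|_\Hdg(t) = (\det \Im \Omega(t))^{1/2}$, hence
\[ -\log \|s\|_\Hdg(t) = -\log|c(t)| - \tfrac{h}{2}\log\det\Im\Omega(t), \]
and $-\log|c(t)| = -\ord_{t=0}(c)\log|t| + O(1)$ as $t \to 0$, yielding the desired asymptotic. For (c), Schmid's nilpotent orbit theorem gives $\Omega(t) = \frac{N}{2\pi i}\log t + \Psi(t)$ with $\Psi$ holomorphic on $\DD$, so $\Im \Omega(t) = -\frac{N}{2\pi}\log|t| + \Im\Psi(t)$; nilpotency of $N$ then forces $\det\Im\Omega(t)$ to grow at most polynomially in $-\log|t|$, which gives the $O(\log(-\log|t|))$ bound. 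If $X_0$ has compact type then its dual graph is a tree, $\Jac X_0$ is an abelian variety, there are no vanishing cycles, and $N = 0$; then $\Omega$ extends holomorphically to $\DD$ with $\Omega(0) \in \HH_g$, yielding $\log\det\Im\Omega(t) = O(1)$.

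The main obstacle is step (a): once the frame is in hand, (b) is a direct computation and (c) is standard asymptotic Hodge theory, but the frame extension itself rests on the non-trivial comparison between the relative dualizing sheaf of the stable model and the canonical extension of $F^1$ on the underlying variation of Hodge structure.
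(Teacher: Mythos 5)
Your route is the same as the paper's --- identify $\ll_f$ over $\DD$ with the (determinant of the) canonical extension via Faltings--Chai, show that $\Omega^*(\d z_1\wedge\ldots\wedge\d z_g)$ is a frame of it, and then read off the asymptotics from $\|s\|_\Hdg=|c(t)|\,(\det\Im\Omega(t))^{h/2}$ together with the nilpotent orbit theorem; your parts (b) and (c) are correct and agree with the paper's computation. The gap sits exactly at the step you yourself flag as the main obstacle, in (a): monodromy invariance of $\d z_1\wedge\ldots\wedge\d z_g$ plus an appeal to ``the defining characterization of the canonical extension'' does not prove that this section extends as a \emph{frame}. Single-valuedness is far from sufficient: $t\cdot\Omega^*(\d z_1\wedge\ldots\wedge\d z_g)$ is equally single-valued and extends as a section but not as a frame, and a single-valued section could in principle even fail to extend meromorphically; nothing in the argument you give rules this out for the section at hand.

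What is needed is one of two inputs, neither of which is invoked in your (a). Either use the metric characterization of Mumford's canonical extension: a single-valued section is a frame of the canonical extension precisely when its Hodge norm and the norm of its inverse grow at most like a power of $\log(1/|t|)$; since $\|\Omega^*(\d z_1\wedge\ldots\wedge\d z_g)\|_\Hdg=(\det\Im\Omega(t))^{1/2}$, the nilpotent orbit bound on $\det\Im\Omega(t)$ --- which you only bring in later, in (c) --- supplies exactly this. This is how the paper argues, citing Mumford for the canonical extension and Faltings--Chai, p.~225, for its identification with $\ll_f$. Alternatively, argue in the Deligne picture: with $\Omega(t)=\frac{\log t}{2\pi i}N+\Psi(t)$ and $\Psi$ holomorphic at $0$, one computes $\d z_i=\tilde{\alpha}_i+\sum_j\Psi_{ij}(t)\,\tilde{\beta}_j$ in the twisted flat frame, and since the coefficient matrix has maximal rank at $t=0$ the $\d z_i$ extend to a frame of the extended Hodge subbundle, so their wedge is a frame of $\ll_f$. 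Either fix is short and uses material already present in your proposal, but as written the crucial extension step is asserted rather than proved.
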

\begin{proof} By (\ref{norm_frame}) we have $ \log  \|  \Omega^*( \d z_1 \wedge \ldots \wedge \d z_g) \|_\Hdg(t) =\frac{1}{2} \log \det \Im \Omega(t)  $ for all $t \in \DD^*$. By the Nilpotent Orbit Theorem there exists an element $c \in \zz_{\geq 0}$ such that $ \det \Im \Omega(t) \sim - c \log |t| $
as $t \to 0$. We conclude that $\Omega^*( \d z_1 \wedge \ldots \wedge \d z_g) $ extends as a frame of Mumford's canonical extension \cite{hi} of $\ll_f|_{\DD^*}$ over $\DD$. By \cite[p.~225]{fc} this canonical extension is equal to $\ll_f$. We thus obtain the first assertion. Also we obtain the equality $\ord_0(s,\ll_f)=\ord_0(\tilde{s})$, which then leads to the asymptotic $-\log |\tilde{s}| \sim -\ord_0(s,\ll_f) \log|t|$ as $t \to 0$. Combining with  (\ref{def_norm})  we find the stated asymptotics for $-\log \|s\|_\Hdg$. The element $c \in \zz_{\geq 0}$ vanishes if the special fiber $X_0$ is a stable curve of compact type. This proves the last assertion.
\end{proof}

\section{Asymptotics of the biextension metric} \label{sec:biext}

In this section we continue the spirit of the asymptotic analysis from Lemma \ref{asympt_hdg} by replacing the Hodge metric $\|\cdot\|_{\Hdg}$ with the biextension metric $\|\cdot\|_\bb$. We recall the necessary ingredients, and finish with a specific asymptotic result due to P.\ Brosnan and G.\ Pearlstein \cite{bp}. General references for this section are \cite{hain_normal} \cite{hrar} and \cite{hain}. We continue to work in the analytic category.

Let $g \geq 2$ be an integer. Let $H$ denote the standard local system of rank $2g$ over $\mm_g$. Following R.\ Hain and D.\ Reed in \cite{hrar} we have a canonical normal function section $\nu \colon \mm_g \to J$  of the intermediate jacobian $J=J(\bigwedge^3 H/H)$ over $\mm_g$, given by the Abel-Jacobi image of a Ceresa cycle on the usual jacobian $J(H)$.  

Let $\bb$ denote the natural biextension line bundle on $J$, equipped with its natural biextension metric \cite{hain}. By pulling back along the section $\nu \colon \mm_g \to J$ we obtain a natural line bundle $ \nn=\nu^* \bb$ over $\mm_g$, equipped with the pullback metric from $\bb$. By functoriality we obtain a canonical smooth hermitian line bundle $\nn$ on the base of any family $\rho \colon \CC \to B$ of smooth complex curves of genus $g$. 

As it turns out, the underlying line bundle of $\nn$ on $\mm_g$ is isomorphic with $\ll_\pi^{\otimes 8g+4}$, where $\ll_\pi = \det \ee_\pi$ is the determinant of the Hodge bundle as before. An isomorphism $\nn \isom \ll_\pi^{\otimes 8g+4}$ is determined up to a constant depending on $g$, and by transport of structure we obtain a smooth hermitian metric $\| \cdot \|_\bb$ on $\ll_\pi$, well-defined up to a constant, that we will ignore from now on.

Following \cite{hrar} we define the real-valued function
\[ \beta = \log \left(\frac{ \| \cdot \|_\bb }{ \|\cdot \|_{\Hdg}} \right) \]
on $\mm_g$. By \cite[Theorem~1.4]{djsecond}  the equality $ \beta = (8g+4)\lambda $ holds on $\mm_g$.  Let $a \in \zz_{>0}$ and let $s$ be a non-zero rational section  of $ \ll_\pi^{\otimes (8g+4)a}$ over $\mm_g$. Consider then the quantity
\begin{equation} \label{quantity}  -\frac{1}{a}\log\|s\|_{\Hdg} - (8g+4)\lambda  = -\frac{1}{a}\log\|s\|_{\Hdg} - \beta = -\frac{1}{a}\log\|s\|_\bb  
\end{equation}
on $\mm_g$.
We would like to be able to control its asymptotic behavior in smooth families over a punctured disk $\DD^*$ degenerating into a stable curve. 

Here we discuss a set-up to study this question. Consider a base complex manifold $B$ and a stable curve  $\rho \colon \CC \to B$ of genus $g \geq 2$ smooth over an open subset $U\subset B$. We then have the canonical Hain-Reed line bundle $\nn$ on $U$, equipped with its natural metric. Assume that the boundary $D=B \setminus U$ of $U$ in $B$ is a normal crossings divisor. D.\ Lear's extension result  \cite{lear}, see also \cite[Corollary~6.4]{hain_normal} and \cite[Theorem~5.19]{pearldiff}, implies that there exists a $\qq$-line bundle $[\nn,B]$ over $B$ extending the line bundle $\nn$ on $U$ in  such a way that the metric on $\nn$ extends continuously over $[\nn,B]$ away  from the singular locus of $D$.  This property uniquely determines the $\qq$-line bundle $[\nn,B]$.

For example, assume $B=\DD$ is the open unit disk, let $D$ be the origin of $\DD$ and let $f \colon X \to \DD$ be a stable curve smooth over $\DD^*=B\setminus D$.  Let $t$ be the standard coordinate on $\DD$. The existence of the Lear extension $[\nn,\DD]$ implies that there exists a rational number $b$ such that the asymptotics
\[ - \log \|s\|_\bb \sim -b \log |t| \]
holds as $t \to 0$. Here as before the notation $\sim$ means that the difference between left and right hand side remains bounded. With Lemma \ref{asympt_hdg} and (\ref{quantity}) we then find that there exists a rational number $c$ such that
\[ \beta = (8g+4)\lambda \sim -c \log |t| - (4g+2) \log \det \Im \Omega(t) \]
as $t \to 0$. One would like to compute $c$. 

Hain and Reed have shown the following result  \cite[Theorem~1]{hrar}. If $X_0$ has one node and the total space $X$ is smooth one has that
\begin{equation} \label{hr_nonsep} \beta = (8g+4)\lambda \sim -g \log|t| - (4g+2)  \log \det \Im \Omega(t) 
\end{equation}
if  the node is ``non-separating'', and
\begin{equation} \label{hr_sep} \beta = (8g+4)\lambda \sim -4h(g-h) \log|t| 
\end{equation}
if the normalization of $X_0$ consists of two connected components of genera $h>0$ and $g-h$. Referring back to Examples \ref{loop} and \ref{segment} we observe that the leading terms in the asymptotics in these cases are controlled by the $\lambda$-invariant of the polarized dual graph of the special fiber. We expect this behavior to extend to arbitrary stable curves $X \to \DD$ smooth over $\DD^*$. 

More precisely, we should have the following. Let $f \colon X \to \DD$ be a stable curve of genus $g \geq 2$ smooth over $\DD^*$. 
Let $\overline{\Gamma}$ denote the dual graph of $X_0$ endowed with its canonical polarization. Recall that if $X_0$ has $r$ nodes the graph $\Gamma$ has $r$ designated edges with weights equal to the thicknesses $(m_1,\ldots,m_r)$ of the nodes on the total space $X$. Let $\lambda(\overline{\Gamma})$ be the $\lambda$-invariant of $\overline{\Gamma}$. In general one expects that the asymptotic
\begin{equation}  \label{unknown} (?) \qquad \lambda(X_t) \sim -\lambda(\overline{\Gamma}) \log|t| - \frac{1}{2} \log \det \Im \Omega(t) 
\end{equation}
holds as $t \to 0$. However this seems not to be known in general. 

We can characterize though when this asymptotics holds in terms of the classifying map $I \colon \DD \to B$ to the universal deformation space of $X_0$, see Proposition \ref{equivalence} below. We hope that the criterion in Proposition \ref{equivalence} will be useful to prove the asymptotic in (\ref{unknown}) in general. In the present paper, we are able to verify the criterion in a special case.

The proof of the following lemma is left to the reader.
\begin{lem} \label{lear_ext_translate} Denote by $[\nn,\DD ]$ the Lear extension of the Hain-Reed line bundle $\nn$ over $\DD$. Suppose $e \in \zz_{>0}$ is such that $[\nn,\DD]^{\otimes e}$  is a line bundle on $\DD$. Denote this line bundle by $N$. Let $s$ be a generating section of  $\nn$ over $\DD^*$ and let $k \in \zz$. The following assertions are equivalent: (a) 
the asymptotic $-e \log \|s\|_\bb \sim -k \log |t|$ holds as $t \to 0$. (b)  the section $t^{-k} \cdot s^{\otimes e} $ extends as a generating section of the line bundle $N$ over $\DD$. (c) the divisor of $s^{\otimes e}$, when viewed as a rational section of $N$, is equal to $k \cdot [0]$. 
\end{lem}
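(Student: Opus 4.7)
The equivalence (b) $\Leftrightarrow$ (c) is formal. Since $s$ is a generating section over $\DD^*$, its tensor power $s^{\otimes e}$ is a rational section of $N$ whose divisor is supported on $\{0\}$; writing $\divisor(s^{\otimes e}) = m \cdot [0]$ for some $m \in \zz$, the section $t^{-k}\cdot s^{\otimes e}$ extends as a nowhere-vanishing section of $N$ on $\DD$ if and only if $\divisor(t^{-k}\cdot s^{\otimes e}) = (m-k)\cdot[0]$ vanishes, i.e., if and only if $m=k$.

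The main work is (a) $\Leftrightarrow$ (b), which I would derive directly from the defining property of the Lear extension recalled just before the lemma. In the present set-up the base is $B = \DD$ and the boundary divisor $D = \{0\}$ is a single smooth point, so $\Sing D = \emptyset$; hence the biextension metric extends \emph{continuously} to the line bundle $N = [\nn,\DD]^{\otimes e}$ on all of $\DD$. Consequently, for any nowhere-vanishing section $\sigma$ of $N$ the function $\|\sigma\|_\bb$ is continuous and strictly positive on $\DD$, and in particular $\log\|\sigma\|_\bb$ is bounded as $t\to 0$.

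Applying this to $\sigma = t^{-k}\cdot s^{\otimes e}$ under (b) yields
\[
  \log\|t^{-k}\cdot s^{\otimes e}\|_\bb \;=\; -k\log|t| + e\log\|s\|_\bb \;=\; O(1),
\]
which is exactly the assertion $-e\log\|s\|_\bb \sim -k\log|t|$ of (a). For the converse, under (a) the same identity shows that $\log\|t^{-k}\cdot s^{\otimes e}\|_\bb$ remains bounded near $0$. Since $t^{-k}\cdot s^{\otimes e}$ is a rational section of $N$ whose divisor is an integer multiple of $[0]$ and the extended metric on $N$ is continuous and strictly positive at the origin, this boundedness forces that multiple to be zero: a zero of positive order at $0$ would send $\log\|\cdot\|_\bb$ to $-\infty$ and a pole would send it to $+\infty$. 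Hence $t^{-k}\cdot s^{\otimes e}$ is a nowhere-vanishing section of $N$, giving (b).

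There is no serious obstacle here; the argument is essentially a translation exercise among the three standard ways to encode the asymptotics of a metrized rational section on a disk. The only external input is Lear's extension theorem, which is precisely what legitimizes extending $\|\cdot\|_\bb$ continuously to $N$ over all of $\DD$ and thereby ties the analytic condition (a) to the algebraic conditions (b) and (c).
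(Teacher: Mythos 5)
The paper gives no proof of this lemma (it is explicitly left to the reader), and your argument is exactly the intended translation: since the boundary divisor in $\DD$ is a single smooth point, Lear's result gives a continuous, nowhere\-vanishing extension of the biextension metric to $N=[\nn,\DD]^{\otimes e}$ over all of $\DD$, and the three assertions then become equivalent by comparing $t^{-k}\cdot s^{\otimes e}$ with a local frame of $N$ at the origin. The only point you use tacitly is that $s^{\otimes e}$ extends meromorphically (as a rational section of $N$) across $0$; this is implicit in the formulation of (c), and under (a) it also follows from the boundedness of $\log\|t^{-k}\cdot s^{\otimes e}\|_\bb$ together with its nonvanishing on $\DD^*$, so your proof is correct as it stands.
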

Let $\rho \colon \CC \to B$ be a stable curve with $B=\DD^d$ a polydisk,  and say $\rho$ is smooth over $U=(\DD^*)^r \times \DD^{d-r}$. Consider a holomorphic arc 
\[ \bar{\phi} \colon \DD \to B\, , \quad t \mapsto (u_1t^{m_1},\ldots,u_rt^{m_r},\bar{\phi}_{r+1},\ldots,\bar{\phi}_{d}) \]
with $m_1,\ldots,m_r$ positive integers, $u_1,\ldots,u_r$ holomorphic units, and $\bar{\phi}_{r+1},\ldots,\bar{\phi}_d$ arbitrary holomorphic functions.  Let $\phi$ be the restriction of $\bar{\phi}$ to $\DD^*$. Note that $\phi$ maps $\DD^*$ into $U$. Let $s$ be a rational section of the line bundle $\ll_\rho^{\otimes (8g+4)a}$ such that $\phi^*s$ has no zeroes or poles on $\DD^*$. Let $q(m_1,\ldots,m_r) \in \qq$ for all $m=(m_1,\ldots, m_r) \in \zz_{> 0}^r$ be determined by the asymptotic
\begin{equation}  -\frac{1}{a}\log \| s \|_\bb(\phi(t)) \sim -q(m_1,\ldots,m_r) \log |t|  
\end{equation}
as $t \to 0$ (cf.\ Lear's extension result). Pearlstein proves in \cite[Theorem~5.37]{pearldiff}  that $q$ is a rational homogeneous weight one function of $m_1,\ldots, m_r$ which extends continuously over $\rr_{\geq 0}^r$. Write $q_i = q(e_i)$ where $e_i$ is the $i$-th coordinate vector in $\rr^r$.  Let $D_i$ for $i=1,\ldots,r$ denote the divisor on $B$ given by the equation $z_i=0$. Then for a holomorphic arc $\bar{\psi} \colon \DD \to B$ intersecting $D_i$ transversally and intersecting none of the $D_j$ where $j \neq i$ we have the asymptotic
\begin{equation} \label{Lear_ext_coeff}  -\frac{1}{a}\log \| s \|_\bb(\psi(t)) \sim -q_i \log |t|  
\end{equation}
as $t \to 0$. Denote by $[\nn,B ]$ the Lear extension of the Hain-Reed line bundle $\nn$ over $B$. Applying part (c) of Lemma \ref{lear_ext_translate} we find the following.
\begin{lem} \label{lem:divisor} The $\qq$-divisor of $s$, when seen as a rational section of $[\nn,B]^{\otimes a}$, is given by the $\qq$-divisor $a\sum_{i=1}^r q_i D_i$. 
\end{lem}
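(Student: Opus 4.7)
The plan is to verify the claimed equality of $\qq$-divisors on $B$ one component $D_i$ at a time. First I would observe that, under the canonical identification $\ll_\rho^{\otimes 8g+4} \cong \nn$ on $U$, the rational section $s$ has no zeros or poles on $U$, and the Lear extension $[\nn,B]$ restricts to $\nn$ over $U$. Hence the $\qq$-divisor of $s$, regarded as a rational section of $[\nn,B]^{\otimes a}$, is supported on the boundary $D = \bigcup_{i=1}^{r} D_i$, and we may write it as $\sum_{i=1}^{r} n_i D_i$ for some $n_i \in \qq$. The task reduces to showing $n_i = a q_i$.

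To pin down $n_i$ I would reduce to the one-variable setting by pulling back along a holomorphic arc $\bar\psi \colon \DD \to B$ transversal to $D_i$ at the origin and disjoint from $D_j$ for $j \neq i$. Set-theoretically, the pullback of $\sum_{j} n_j D_j$ along $\bar\psi$ is the divisor $n_i \cdot [0]$ on $\DD$. On the other hand, the Lear extension is uniquely characterized by continuity of the metric off the singular locus of $D$, and this property is preserved under pullback along a transversal arc (the image meets $D$ only at a smooth point of $D$). One therefore has a canonical identification $\bar\psi^{*}[\nn,B] = [\bar\psi^{*}\nn,\DD]$ of $\qq$-line bundles on $\DD$, and the divisor of the pulled-back section $\bar\psi^{*} s$ computed in either line bundle is the same.

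The last step is to apply Lemma \ref{lear_ext_translate} on $\DD$. Choosing $e=a$ (or, if $aq_i \notin \zz$, any multiple $e$ of $a$ making $eq_i$ integral) and writing $s$ locally as $\sigma^{\otimes a}$ for a generating section $\sigma$ of $\bar\psi^{*}\nn$ on $\DD^{*}$, the asymptotic (\ref{Lear_ext_coeff}) becomes $-e\log\|\sigma\|_{\bb} \sim -eq_i\log|t|$ as $t \to 0$. Part (c) of Lemma \ref{lear_ext_translate} then identifies the divisor of $s = \sigma^{\otimes a}$ as a rational section of $[\bar\psi^{*}\nn,\DD]^{\otimes a}$ with $aq_i\cdot [0]$. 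Comparing with the previous paragraph gives $n_i = a q_i$, which is the desired formula.

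The only real subtlety is the compatibility $\bar\psi^{*}[\nn,B] = [\bar\psi^{*}\nn,\DD]$ under transversal pullback; this is a formal consequence of the uniqueness of the Lear extension recalled in the previous section, so no new analytic input beyond the Hain--Pearlstein framework already in use should be required.
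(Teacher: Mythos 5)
Your argument is correct and follows essentially the same route as the paper: establish the asymptotic $-\frac{1}{a}\log\|s\|_\bb(\psi(t)) \sim -q_i\log|t|$ along an arc transversal to $D_i$ and then invoke part (c) of Lemma \ref{lear_ext_translate} to read off the coefficient $aq_i$ of $D_i$. The only difference is that you spell out the compatibility $\bar\psi^{*}[\nn,B]=[\bar\psi^{*}\nn,\DD]$ (which indeed follows from the uniqueness of the Lear extension, since a transversal arc meets $D$ only at a smooth point), a step the paper leaves implicit.
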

Following \cite[Section~14]{hain_normal}, the \emph{height jump} for the map $\bar{\phi}$ is defined to be the rational homogeneous weight one function
\begin{equation} \label{def:jump} j(m_1,\ldots,m_r) = -q(m_1,\ldots,m_r) + \sum_{i=1}^r q_i m_i \, . 
\end{equation}
Note that the height jump is independent of the choice of $s$. It was conjectured by Hain \cite[Conjecture~14.6]{hain_normal} and proved by P.~Brosnan and G.~Pearlstein in \cite{bp} (combine \cite[Corollary~11]{bp} and \cite[Theorem~20]{bp}) and independently by J. I. Burgos Gil, D. Holmes and the author in \cite[Theorem~4.1]{bghdj} that $j\geq 0$. Note that if for some $\bar{\phi} \colon \DD \to B$ as above the height jump is strictly positive, no positive tensor power of the Hain-Reed line bundle $\nn$ on $U$ extends as a continuously metrized line bundle over $B$. 

Now assume that $\rho \colon \CC \to B$ is the universal deformation of the special fiber $X_0$ of our stable curve  $f \colon X \to \DD$. Recall \cite[Section~XI.6]{acg} that the base space $B$ is a complex manifold, carrying an action of $\Aut(X_0)$, and endowed with a canonical point $b_0$ with fiber $X_0$. Locally around the point $b_0 \in B$ the divisor of singular curves in $B$ is a normal crossings divisor. 
Hence, locally around $b_0$ the family $\CC \to B$ can be identified with a stable curve over $\DD^d$ smooth over $(\DD^*)^r \times \DD^{d-r}$, for some integers $d, r$. Then for the classifying map $I \colon \DD \to B$ one has for $i=1,\ldots,r$ that $\mult_0 I^*z_i = m_i$, where $m_1,\ldots,m_r$ are the thicknesses of the nodes of $X_0$ on $X$. We find in particular a height jump $j(m_1,\ldots,m_r) \in \qq$ associated to~$I$.

Define for a pm-graph $\overline{\Gamma}$ of genus $g \geq 1$ its \emph{slope} to be the invariant
\begin{equation} \label{def_slope}  \mu(\overline{\Gamma}) = (8g+4)\lambda(\overline{\Gamma}) - g\delta_0(\overline{\Gamma}) - 4 \sum_{h=1}^{[g/2]} h(g-h) \delta_h(\overline{\Gamma}) \, . 
\end{equation}
It was conjectured by Zhang in \cite[Conjecture~1.4.5]{zhgs} and proved by Cinkir in \cite[Theorem~2.10]{ci_eff} that for all pm-graphs $\overline{\Gamma}$ we have $\mu(\overline{\Gamma}) \geq 0$. 

\begin{lem} \label{big_asymptotic}  Let $f \colon X \to \DD$ be a stable curve of genus $g \geq 2$, smooth over $\DD^*$.
 Let $\overline{\Gamma}$ denote the pm-graph associated to~$f$ and let $j$ be the height jump (\ref{def:jump}) for the classifying map $I \colon \DD \to B$ to the universal deformation space $B$ of the special fiber $X_0$. Let $a \in \zz_{>0}$ and let $s$ be a rational section of $\ll_f^{\otimes (8g+4)a}$ such that $s$ has no zeroes or poles on $\DD^*$. Then the asymptotics
\[  \begin{split} -\frac{1}{a}  \log & \|s\|_{\Hdg}(X_t) - (8g+4)\lambda(X_t) \\ & \sim -\left(\frac{1}{a}\ord_0(s,\ll_f^{\otimes (8g+4)a} )-j-(8g+4)\lambda(\overline{\Gamma})+\mu(\overline{\Gamma})   \right) \log|t| \end{split}  \]
holds as $t \to 0$, where $\mu(\overline{\Gamma})$ is the slope of $\overline{\Gamma}$ as in (\ref{def_slope}).
\end{lem}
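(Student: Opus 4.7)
The plan is to apply the identity (\ref{quantity}) in order to rewrite the left hand side as $-\frac{1}{a}\log\|s\|_\bb(X_t)$, and then to analyze the biextension norm via the Lear extension of $\nn$ over the universal deformation space $B$ of $X_0$. Near the base point $b_0$ we identify $B$ with a polydisk $\DD^d$ in such a way that the boundary divisor $D_i$ corresponding to node $i$ of $X_0$ is defined by $z_i=0$, for $i=1,\ldots,r$. The classifying map $I \colon \DD \to B$ then takes the shape $I(t)=(u_1 t^{m_1},\ldots,u_r t^{m_r},\bar\phi_{r+1},\ldots,\bar\phi_d)$ with $m_1,\ldots,m_r$ the thicknesses of the nodes of $X_0$ on $X$ and $u_1,\ldots,u_r$ holomorphic units. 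Choosing a local frame $\sigma$ of $\ll_\rho^{\otimes(8g+4)a}$ near $b_0$, we write $s=t^k \cdot w \cdot I^*\sigma$ with $k=\ord_0(s,\ll_f^{\otimes(8g+4)a})$ and $w$ a unit on $\DD$, so that
\[
-\frac{1}{a}\log\|s\|_\bb(I(t)) = -\frac{k}{a}\log|t| - \frac{1}{a}\log\|\sigma\|_\bb(I(t)) + O(1).
\]

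The asymptotic behavior of $-\frac{1}{a}\log\|\sigma\|_\bb$ along the arc $I$ is controlled by Pearlstein's theorem as recalled in the paragraph containing (\ref{Lear_ext_coeff}): there exists a rational, homogeneous, weight-one function $q$ of $(m_1,\ldots,m_r)$ such that
\[
-\frac{1}{a}\log\|\sigma\|_\bb(I(t)) \sim -q(m_1,\ldots,m_r)\log|t|.
\]
By the very definition (\ref{def:jump}) of the height jump, $q(m_1,\ldots,m_r) = \sum_{i=1}^r q_i m_i - j$.

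To identify the $q_i$ we appeal to (\ref{Lear_ext_coeff}): the coefficient $q_i$ is computed from any holomorphic arc $\bar\psi \colon \DD \to B$ transverse to $D_i$ and avoiding the other $D_j$. Such an arc realizes a one-parameter family of stable curves with smooth total space whose special fiber carries a single node of the same topological type as node $i$ of $X_0$, that is, of type $0$ or of type $h \in \{1,\ldots,[g/2]\}$. Applying Hain--Reed's asymptotics (\ref{hr_nonsep}) and (\ref{hr_sep}) together with Lemma \ref{asympt_hdg} and (\ref{quantity}) to a generating section of $\ll^{\otimes(8g+4)a}$ on $\bar\psi^*\CC$, one obtains $-\frac{1}{a}\log\|\cdot\|_\bb \sim g\log|t|$ in the type-$0$ case and $-\frac{1}{a}\log\|\cdot\|_\bb \sim 4h(g-h)\log|t|$ in the type-$h$ case, since the $(4g+2)\log\det\Im\Omega$ contributions from the two terms of (\ref{quantity}) cancel exactly. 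Comparison with $-q_i\log|t|$ then yields $q_i=-g$ for type-$0$ edges and $q_i=-4h(g-h)$ for type-$h$ edges. Summing over the $r$ edges of $\overline\Gamma$ and invoking the formula (\ref{def_slope}) for the slope,
\[
\sum_{i=1}^r q_i m_i = -g\,\delta_0(\overline\Gamma) - 4\sum_{h=1}^{[g/2]} h(g-h)\,\delta_h(\overline\Gamma) = \mu(\overline\Gamma) - (8g+4)\lambda(\overline\Gamma).
\]
Substituting this back into the expression from the first paragraph and invoking (\ref{quantity}) to identify $-\frac{1}{a}\log\|s\|_\bb(X_t)$ with the left hand side of the claim delivers exactly the stated asymptotic.

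The principal obstacle lies in the identification of the $q_i$ with the Hain--Reed coefficients. One must verify carefully that a transverse arc to $D_i$ which avoids the other $D_j$ indeed corresponds to a one-parameter smoothing of a single node of the right topological type, with smooth total space, so that the Hain--Reed theorems apply directly; this is a local analysis of the universal deformation at a generic point of $D_i$, exploiting the fact that transversality to $D_i$ forces the thickness at the surviving node to be equal to $1$. Once this reduction is made, the remainder is a bookkeeping exercise within the Lear--Pearlstein formalism.
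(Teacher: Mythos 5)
Your argument is correct, and it reaches the stated asymptotics by a genuinely different route at the key step. Both you and the paper begin the same way: rewrite the left hand side as $-\frac{1}{a}\log\|s\|_\bb$, reduce to a section pulled back from the universal deformation space, and invoke Pearlstein's homogeneous weight-one function $q$ together with the definition (\ref{def:jump}) of the height jump, so that everything comes down to evaluating $\sum_i q_i m_i$. The difference is in how the coefficients $q_i$ are identified. The paper does not touch the individual $q_i$ analytically: it uses Lemma \ref{lem:divisor} to interpret $a\sum_i q_i m_i$ as $\ord_0(s,I^*[\nn,B]^{\otimes a})$ and then substitutes Hain--Reed's divisor-class formula (\ref{useful_II}) for $[\nn,\overline{\mm}_g]$ in $\Pic(\overline{\mm}_g)$, together with $\delta_h(\overline{\Gamma})=\mult_0 J^*\Delta_h$, to get $\sum_i q_i m_i - \frac{1}{a}\ord_0(s) = \mu(\overline{\Gamma})-(8g+4)\lambda(\overline{\Gamma})$. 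You instead recompute each $q_i$ from scratch: a transverse slice to $D_i$ avoiding the other branches gives a one-parameter family with a single node of thickness one (hence smooth total space) of the same type as the edge $e_i$, and then the one-node asymptotics (\ref{hr_nonsep}), (\ref{hr_sep}) of Hain--Reed combined with Lemma \ref{asympt_hdg} yield $q_i=-g$ or $q_i=-4h(g-h)$; note that in the separating case there is no cancellation of $\log\det\Im\Omega$ terms to speak of, but that term is $O(1)$ by the compact-type statement in Lemma \ref{asympt_hdg}, so your conclusion stands. In effect you rederive locally, on the deformation space, the restriction of the identity (\ref{useful_II}), using Theorem~1 of Hain--Reed where the paper quotes their Theorem~3. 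What your route buys is that it stays entirely on the universal deformation space and never passes through $\Pic(\overline{\mm}_g)$, so questions about automorphisms of $X_0$ and the map to moduli do not arise; the price is the extra local verification you flag (transverse slices produce one-nodal families of the right type with regular total space, and the continuity statement behind (\ref{Lear_ext_coeff}) applies), whereas the paper's divisor-class bookkeeping disposes of all edges at once.
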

\begin{proof} Left and right hand side of the stated asymptotics change in the same manner upon changing the rational section $s$, and hence we may assume without loss of generality that $s$ is the pullback along $I$ of a rational section of $\ll_\rho^{\otimes (8g+4)a}$, where $\rho \colon \CC \to B$ is the universal deformation of $X_0$. 
Let $m_1,\ldots,m_r$ be the multiplicities at $0 \in \DD$ of the analytic branches through $b_0 \in B$ determined by the locus of singular curves in $B$. 
Then one has the asymptotics
\[ \begin{split} -\frac{1}{a} \log & \|s\|_{\Hdg}(X_t) - (8g+4)\lambda(X_t)  \\ & = -\frac{1}{a}\log\|s\|_\bb(X_t) \\ & \sim - q(m_1,\ldots,m_r)\log |t| \\
& = -\left(-j + \sum_{i=1}^r q_i m_i\right) \log|t| \\
& = -\left(-j + \left( \sum_{i=1}^r q_i m_i - \frac{1}{a}\ord_0 (s) \right) \right) \log|t| -\frac{1}{a}\ord_0 (s) \log|t|  
\end{split} \]
as $t \to 0$.
From Lemma \ref{lem:divisor} we obtain that the $\qq$-divisor of $s$ when seen as a rational section of $[\nn,B]^{\otimes a}$ is equal to $a\sum_{i=1}^r q_iD_i$ where $D_i$ for $i=1,\ldots,r$ denotes the divisor on $B$ given by $z_i=0$. Since $\mult_0 I^*z_i = m_i$ for $i=1,\ldots,r$ it follows that
\begin{equation} \label{useful_I} a \sum_{i=1}^r q_i m_i =\ord_0(s,I^*[\nn,B ]^{\otimes a}) \, . 
\end{equation}
By \cite[Theorem~3]{hrar} we have that
\begin{equation} \label{useful_II}
 [\nn,\overline{\mm}_g] = (8g+4)\bar{\lambda}_1  - g\delta_0 -4 \sum_{h=1}^{[g/2]} h(g-h) \delta_h 
 \end{equation}
holds in the Picard group of $\overline{\mm}_g$. Here $\bar{\lambda}_1$ denotes the class of $\ll_{\bar{\pi}}$, and $\delta_i$ for $i=0,\ldots,[g/2]$ are the classes determined by the boundary divisors $\Delta_i$ on $\overline{\mm}_g$. 
From (\ref{useful_I}) and (\ref{useful_II}) we conclude that
\[ \begin{split} \sum_{i=1}^r q_i m_i - \frac{1}{a}\ord_0 (s,\ll_f^{\otimes (8g+4)a})  & =  - g \delta_0(\overline{\Gamma}) -  4 \sum_{h=1}^{[g/2]} h(g-h)\delta_h(\overline{\Gamma}) \\ & =  -(8g+4)\lambda(\overline{\Gamma}) + \mu(\overline{\Gamma}) \, . \end{split} \]
The required asymptotics follows.
\end{proof}
We deduce the following criterion to verify whether (\ref{unknown}) holds.
\begin{prop} \label{equivalence} The following assertions are equivalent: (a) one has the asymptotics \[  \lambda(X_t) \sim -\lambda(\overline{\Gamma}) \log|t| - \frac{1}{2} \log \det \Im \Omega(t) \]
as $t \to 0$, (b) one has the asymptotics 
\[ -\frac{1}{a}\log\|s\|_{\Hdg}(X_t) - (8g+4)\lambda(X_t) \sim -\left(\frac{1}{a} \ord_0(s,\ll_f^{\otimes (8g+4)a}) - (8g+4)\lambda(\overline{\Gamma}) \right) \log|t|  \]
as $t \to 0$, (c) the height jump for the classifying map $I \colon \DD \to B$ to the universal deformation space of $X_0$ and the slope of the pm-graph associated to $X$ are equal.
\end{prop}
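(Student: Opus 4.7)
The plan is to derive the equivalence by chaining the two technical results just established, with Lemma \ref{asympt_hdg} handling the passage between (a) and (b), and Lemma \ref{big_asymptotic} handling the passage between (b) and (c). Both equivalences reduce to comparing the coefficients of $-\log|t|$ in bounded-difference asymptotics, so the argument should be essentially formal given those two lemmas.

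First I would show (a) $\Leftrightarrow$ (b). Pick a rational section $s$ of $\ll_f^{\otimes (8g+4)a}$ with no zeros or poles on $\DD^*$ (the stated asymptotics in (b) do not depend on the choice of $s$, as changes of $s$ contribute equal holomorphic terms on both sides). Lemma \ref{asympt_hdg}, applied in weight $(8g+4)a$, gives
\[ -\frac{1}{a}\log\|s\|_{\Hdg}(X_t) \sim -\frac{1}{a}\ord_0(s,\ll_f^{\otimes(8g+4)a})\log|t| - \frac{8g+4}{2}\log\det\Im\Omega(t). \]
Subtracting this relation from $(8g+4)$ times the asymptotic in (a) produces exactly the asymptotic in (b), and conversely subtracting (b) from the above display recovers (a) after dividing by $8g+4$. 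Thus (a) and (b) are equivalent, and the $\log\det\Im\Omega$ term in (a) is precisely what is needed to cancel the corresponding Hodge-metric term.

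Next I would show (b) $\Leftrightarrow$ (c). Lemma \ref{big_asymptotic}, applied to the same $s$, yields
\[ -\frac{1}{a}\log\|s\|_{\Hdg}(X_t) - (8g+4)\lambda(X_t) \sim -\left(\frac{1}{a}\ord_0(s,\ll_f^{\otimes(8g+4)a}) - j - (8g+4)\lambda(\overline{\Gamma}) + \mu(\overline{\Gamma})\right)\log|t|. \]
Comparing the coefficient of $-\log|t|$ with the one in (b), assertion (b) is equivalent to the vanishing of $-j+\mu(\overline{\Gamma})$, i.e.\ to $j=\mu(\overline{\Gamma})$, which is precisely (c).

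There is no genuine obstacle here: the substantive content has already been absorbed into Lemmas \ref{asympt_hdg} and \ref{big_asymptotic} (and ultimately into the Lear extension theorem, the Hain--Reed identity \eqref{useful_II}, and the Brosnan--Pearlstein height-jump result). The only minor care is to note that the equivalence statements hold modulo bounded functions, so the choice of $s$ is immaterial and one may freely pass between the various normalizations of sections and tensor powers involved.
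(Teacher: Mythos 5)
Your argument is correct and is essentially the paper's own proof: the paper likewise deduces (a) $\Leftrightarrow$ (b) from Lemma \ref{asympt_hdg} and (b) $\Leftrightarrow$ (c) from Lemma \ref{big_asymptotic}, with your write-up merely spelling out the coefficient comparison (and the harmless independence of the choice of $s$) that the paper leaves implicit.
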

\begin{proof} The equivalence of (a) and (b) follows from Lemma \ref{asympt_hdg}. The equivalence of (b) and (c) follows from Lemma \ref{big_asymptotic}.
\end{proof}
Now we have the following two results, that allow us to verify condition (c) in a special case.
\begin{thm} (P.~Brosnan, G.~Pearlstein \cite{bp}) \label{brosnan-pearl} Assume that the stable curve $X_0$  consists of two smooth irreducible components, one of genus $h$, one of genus $g-h-1$, joined at two points. Then the height jump $j$ for the classifying map $I \colon \DD \to B$ to the universal deformation space of $X_0$ is equal to
\[ j= \frac{4m_1m_2}{m_1+m_2} (g-h-1)h \, . \]
Here $m_1, m_2$ are the multiplicities at $0 \in \DD$ of the pullbacks of the two analytic branches through $b_0 \in B$ determined by the locus of singular curves in $B$. 
\end{thm}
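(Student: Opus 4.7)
The plan is to combine the criterion of Proposition~\ref{equivalence} with a slope computation from Example~\ref{lambda_for_two_gon}, and then invoke the general machinery of Brosnan and Pearlstein \cite{bp} to verify the resulting asymptotic.

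First I observe that in the pm-graph $\overline{\Gamma}$ of $X_0$ both edges are of type $0$: contracting either of them still leaves the two vertices connected by the other edge, so the resulting graph is connected. Thus $\delta_0(\overline{\Gamma}) = m_1+m_2$ and $\delta_i(\overline{\Gamma}) = 0$ for $i\geq 1$, and Example~\ref{lambda_for_two_gon} together with the definition (\ref{def_slope}) yields
\[ \mu(\overline{\Gamma}) \;=\; (8g+4)\lambda(\overline{\Gamma}) - g(m_1+m_2) \;=\; \frac{4m_1m_2}{m_1+m_2}(g-h-1)h. \]
The theorem therefore amounts to the identity $j = \mu(\overline{\Gamma})$ for the classifying map $I$, which by Proposition~\ref{equivalence} is equivalent to condition~(a) of that proposition for the family~$f$.

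To verify this equivalent form I would use the framework of \cite{bp}. Near $b_0 \in B$, the universal deformation is a polydisk with two smooth boundary components $D_1, D_2$ corresponding to the two nodes, carrying commuting rank-one monodromy logarithms $N_1, N_2$ (one Picard-Lefschetz vanishing cycle per node). The limit mixed Hodge structure on $H^1(X_0)$ is an extension of $H^1(C_1)\oplus H^1(C_2)$ by a one-dimensional Tate piece, reflecting the toric factor of $\Jac(X_0)$ coming from the loop in the dual graph. Brosnan and Pearlstein give in \cite[Cor.~11, Thm.~20]{bp} a polyhomogeneous asymptotic expansion of the biextension height along any holomorphic arc into a deformation space, expressed purely in terms of such limit MHS data. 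The partial-smoothing coefficients $q_1, q_2$ in our setting come from arcs transverse to a single branch: the corresponding stable fiber has a single non-separating node (smoothing one of the two nodes connects $C_1$ and $C_2$ by a cylinder, so the remaining node lies on a loop of the new dual graph), so (\ref{hr_nonsep}) gives $q_1 = q_2 = -g$. The asymptotic along the full arc $\phi(t) = (u_1 t^{m_1}, u_2 t^{m_2},\ldots)$ carries an additional nonlinear contribution whose coefficient involves the effective resistance $\frac{m_1 m_2}{m_1+m_2}$ of the two parallel edges, weighted by the topological factor $4h(g-h-1)$ arising from the pairing of the Ceresa class with the vanishing cycles of the two components. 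Combining with the linear contribution $-g(m_1+m_2)$ yields $q(m_1,m_2)=-g(m_1+m_2)-\frac{4m_1m_2}{m_1+m_2}(g-h-1)h$, and the defining identity (\ref{def:jump}) produces the claimed formula for $j$.

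The main obstacle is the extraction of the nonlinear term $\frac{4m_1m_2}{m_1+m_2}(g-h-1)h$ from the limit MHS computation. This is the decisive content of the Brosnan-Pearlstein theorem: one must carefully analyze the nilpotent orbit generated by the commuting pair $(N_1, N_2)$ acting on the Ceresa biextension and isolate the contribution coming from the rank-two weight filtration jump at the deepest stratum, where the effective resistance naturally enters through the associated harmonic analysis on the dual graph.
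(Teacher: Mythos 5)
Your proposal does not actually prove the theorem: the entire content of the statement is the value of the nonlinear term $\frac{4m_1m_2}{m_1+m_2}h(g-h-1)$ in $q(m_1,m_2)$, and at exactly that point you write that the arc asymptotic ``carries an additional nonlinear contribution whose coefficient involves the effective resistance \dots weighted by the topological factor $4h(g-h-1)$'' and then concede in your final paragraph that extracting this term is ``the main obstacle'' requiring a careful nilpotent-orbit/limit-MHS analysis that you do not carry out. Asserting the coefficient and deferring its derivation is precisely the gap; moreover the results of \cite{bp} you invoke (Corollary~11 and Theorem~20) are, in the paper's own usage, the results giving existence of the expansion and non-negativity of the jump, not its value. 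The computation you need is done in the proof of \cite[Theorem~241]{bp}, which treats exactly this configuration (two smooth components joined at two nodes), and the paper's proof consists of citing that calculation together with one nontrivial observation you omit: the statement of \cite[Theorem~241]{bp} assumes $\Aut(X_0)$ is trivial (so that the deformation space maps locally isomorphically to $\overline{\mm}_g$), whereas in the intended application $X_0$ has extra automorphisms; the point is that the calculation is performed on the universal deformation space and is insensitive to $\Aut(X_0)$. Since you work on the universal deformation space throughout, your setup could absorb this remark, but without the actual computation there is nothing to absorb it into.

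Two further, smaller issues. First, the detour through Proposition~\ref{equivalence} is both unnecessary and logically backwards relative to the paper: the asymptotic in condition~(a) for this degeneration type is not known independently --- in the paper it is \emph{deduced} from this very theorem (Corollary~\ref{end_result}) --- so reducing the theorem to (a) and then claiming (a) via the same unperformed expansion gains nothing; in the end you bypass it anyway by writing down $q(m_1,m_2)$ and plugging into (\ref{def:jump}). Second, your assertion $q_1=q_2=-g$ is only meaningful after fixing the rational section $s$ (the $q_i$ depend on $s$ by Lemma~\ref{lem:divisor}, only the jump $j$ is independent of it); implicitly you are taking $s$ to be a nonvanishing frame coming from the canonical extension, and this should be said, although with that choice the identification of $q_i$ via (\ref{hr_nonsep}) is correct, since an arc transverse to a single branch yields a one-parameter smoothing with a single non-separating node and smooth total space. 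The slope computation $\mu(\overline{\Gamma})=\frac{4m_1m_2}{m_1+m_2}(g-h-1)h$ via Example~\ref{lambda_for_two_gon} is correct but is the content of Proposition~\ref{slope}, not of the theorem under review.
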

\begin{proof} This follows from the calculation done in the proof of \cite[Theorem~241]{bp}. We note that \cite[Theorem~241]{bp} is about a stable curve $C$ in $\overline{\mm}_g$ consisting of two smooth irreducible components joined in two points, and with trivial automorphism group, so that the statement of \cite[Theorem~241]{bp} does not apply immediately to our setting if $X_0$ has non-trivial automorphisms. However the calculation in the proof of \cite[Theorem~241]{bp} is carried out effectively on the universal deformation space of $C$. Under the assumption that $\Aut(C)$ is trivial, this deformation space maps locally isomorphically to  $\overline{\mm}_g$.
Now the calculation in the proof of \cite[Theorem~241]{bp} on the universal deformation space of $C$ puts no particular restrictions on $\Aut(C)$, and we conclude that the expression for the height jump in \cite[Theorem~241]{bp} is valid in our setting. 
\end{proof}
\begin{prop} \label{slope} Let $\bar{\Gamma}$ be a pm-graph of genus~$g$ consisting of two vertices of genera $h$ and $g-h-1$ and joined by two edges of weights $m_1, m_2$. Then the slope of $\overline{\Gamma}$ is equal to
\[ \mu(\overline{\Gamma}) =  \frac{4m_1m_2}{m_1+m_2} (g-h-1)h \, . \]
\end{prop}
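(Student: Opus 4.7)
The proof should be a short computation directly from the definition of the slope in equation~(\ref{def_slope}), combined with the $\lambda$-invariant already computed in Example~\ref{lambda_for_two_gon}.

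First I would identify the quantities $\delta_h(\overline{\Gamma})$ for $h = 0, 1, \ldots, [g/2]$. The pm-graph in question has two vertices joined by two edges of weights $m_1$ and $m_2$, forming a single cycle. Removing the interior of either edge leaves the two vertices still joined by the remaining edge, so the resulting graph is connected. Therefore both edges are of type~$0$, giving $\delta_0(\overline{\Gamma}) = m_1 + m_2$ and $\delta_h(\overline{\Gamma}) = 0$ for every $h \geq 1$. (As a sanity check, $b_1(\Gamma) = 1$ and the vertex genera sum to $g-1$, confirming $g(\overline{\Gamma}) = g$.)

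Next I would invoke Example~\ref{lambda_for_two_gon}, which gives
\[
(8g+4)\lambda(\overline{\Gamma}) = \frac{4 m_1 m_2}{m_1 + m_2}(g-h-1)h + g(m_1 + m_2) \, .
\]
Substituting into the definition
\[
\mu(\overline{\Gamma}) = (8g+4)\lambda(\overline{\Gamma}) - g\,\delta_0(\overline{\Gamma}) - 4 \sum_{h'=1}^{[g/2]} h'(g-h')\,\delta_{h'}(\overline{\Gamma}) \, ,
\]
the second term cancels the $g(m_1+m_2)$ contribution from $\lambda$, and the sum over $h' \geq 1$ vanishes. What remains is exactly
\[
\mu(\overline{\Gamma}) = \frac{4 m_1 m_2}{m_1 + m_2}(g-h-1)h \, ,
\]
as required.

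There is no real obstacle here: the proposition is essentially a bookkeeping consequence of Example~\ref{lambda_for_two_gon} together with the (obvious) classification of the two edges as type~$0$. The only mild care needed is to avoid confusion between the index $h$ labelling the two vertex genera in the statement and the summation variable in the definition of $\mu$; I would rename the latter to $h'$ to keep things unambiguous. The comparison with Theorem~\ref{brosnan-pearl} then yields the equality $j = \mu(\overline{\Gamma})$ in this case, which is precisely what Proposition~\ref{equivalence}(c) will require in the application.
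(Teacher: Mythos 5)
Your proof is correct and is essentially the paper's own argument: the paper likewise deduces the proposition directly from the definition (\ref{def_slope}) together with Example \ref{lambda_for_two_gon}, with the observation that both edges are of type $0$ (so $\delta_0(\overline{\Gamma})=m_1+m_2$ and all higher $\delta_{h'}$ vanish) left implicit. Your explicit bookkeeping, including the renaming of the summation index, is a fine way to write it out.
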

\begin{proof} This follows directly from the definition (\ref{def_slope}), and Example \ref{lambda_for_two_gon}.
\end{proof}
We observe that the height jump in Theorem \ref{brosnan-pearl} and the slope in Proposition~\ref{slope} are equal. With Proposition~\ref{equivalence} we thus obtain the following result.
\begin{cor} \label{end_result} Assume that the stable curve $X_0$  consists of two smooth irreducible components, one of genus $h$, one of genus $g-h-1$, joined at two points. Then one has the asymptotics 
\begin{equation} \label{asymp_arch} -\frac{1}{a}\log\|s\|_{\Hdg}(X_t) - (8g+4)\lambda(X_t) \sim -\left(\frac{1}{a} \ord_0(s) - (8g+4)\lambda(\overline{\Gamma}) \right) \log|t|  
\end{equation}
and 
\begin{equation}  \lambda(X_t) \sim -\lambda(\overline{\Gamma}) \log|t| - \frac{1}{2} \log \det \Im \Omega(t) 
\end{equation}
as $t \to 0$.
\end{cor}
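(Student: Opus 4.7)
The plan is to recognize this as essentially a matter of assembling Proposition \ref{equivalence}, Theorem \ref{brosnan-pearl}, and Proposition \ref{slope}, with one small bookkeeping check. First I would identify the pm-graph $\overline{\Gamma}$ attached to $f \colon X \to \DD$: since the special fiber $X_0$ is two smooth components of genera $h$ and $g-h-1$ joined at two nodes, the dual graph of $X_0$ has exactly two vertices (of those genera) connected by two edges, and by the construction recalled after (4.1) the weights of these edges are the thicknesses $m_1,m_2$ of the two nodes on the total space $X$. Thus $\overline{\Gamma}$ is exactly the pm-graph considered in Example \ref{lambda_for_two_gon} and in Proposition \ref{slope}.

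Next I would invoke Proposition \ref{equivalence}, which tells us that the two displayed asymptotics are equivalent to one another and to condition (c): the height jump $j$ for the classifying map $I \colon \DD \to B$ to the universal deformation space of $X_0$ agrees with the slope $\mu(\overline{\Gamma})$. So the whole proof is reduced to verifying that $j = \mu(\overline{\Gamma})$.

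For this, Theorem \ref{brosnan-pearl} computes
\[ j = \frac{4m_1 m_2}{m_1+m_2}(g-h-1)h, \]
while Proposition \ref{slope} gives the identical expression for $\mu(\overline{\Gamma})$. The only point of potential concern is matching the two sets of integers labelled $m_1,m_2$: those in Theorem \ref{brosnan-pearl} are the multiplicities at $0 \in \DD$ of the pullbacks under $I$ of the two analytic branches through $b_0 \in B$ cutting out the singular locus, whereas those in Proposition \ref{slope} are the edge weights of $\overline{\Gamma}$. But the discussion preceding Lemma \ref{big_asymptotic} (specifically the identity $\mult_0 I^*z_i = m_i$ where $m_i$ are the thicknesses of the nodes of $X_0$ on $X$) identifies these two pairs, so the match is automatic.

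With $j = \mu(\overline{\Gamma})$ established, condition (c) of Proposition \ref{equivalence} holds, and therefore (a) and (b) hold as well, giving the two asymptotics in the statement. I do not anticipate any genuine obstacle: the substantive analytic content sits entirely inside Theorem \ref{brosnan-pearl} and the algebraic input of Proposition \ref{slope}, and this corollary is simply the observation that in this particular topological type the height jump conjecturally equal to the slope is in fact equal to it, so that the expected $\lambda$-asymptotic (\ref{unknown}) is verified in this case.
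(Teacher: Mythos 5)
Your proposal is correct and follows exactly the paper's own argument: the paper likewise observes that the height jump from Theorem \ref{brosnan-pearl} equals the slope from Proposition \ref{slope} (with the edge weights identified with the node thicknesses) and then concludes via the equivalence of (a), (b), (c) in Proposition \ref{equivalence}. Your extra bookkeeping remark matching the two sets of integers $m_1,m_2$ is a sound, if implicit, part of the same reasoning.
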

We will use (\ref{asymp_arch}) with $g=3$ and $h=1$ for the proof of our main result.

\section{The modular form $\chi_{18}$} \label{chi}

From now on we specialize to the case that $g=3$.
We introduce the modular form $\chi_{18}$, following \cite{ig}. For more details and properties we refer to \cite{lrz} and the references therein. On Siegel's upper half space $\HH_3$ in degree 3 we have the holomorphic function
\[ \tilde{\chi}_{18}(\Omega) = \prod_{\varepsilon \, \textrm{even}} \theta_\varepsilon(0,\Omega) \, , \]
where $ \theta_\varepsilon(0,\Omega)$ denotes the Thetanullwert with characteristic $\varepsilon$, and where the product runs over all 36 even theta characteristics in genus three. We have $\tilde{\chi}_{18} \in \mathcal{R}_{3,18}$, see \cite[pp.~850--851]{ig} and \cite[Section~1]{lrz}. We define the corresponding element
\[ \begin{split} \chi_{18} & =  \tilde{\chi}_{18}(\Omega) \left( \frac{\d \zeta_1}{\zeta_1} \wedge \frac{\d \zeta_2}{\zeta_2}\wedge \frac{\d \zeta_3}{\zeta_3} \right)^{\otimes 18} \\ & = (2\pi i)^{54} \cdot \tilde{\chi}_{18}(\Omega) \cdot \left( \d z_1 \wedge \d z_2 \wedge \d z_3 \right)^{\otimes 18}  \end{split} \]
in $\ss_{3,18}(\cc)$. The analytic modular form $\tilde{\chi}_{18}$ has a Fourier expansion as a power series in the variables $q_{ij} = \exp(2\pi i \Omega_{ij})$, with coefficients in $\zz$, and by the $q$-expansion principle, cf. \cite[p.~140]{fc}, the modular form $\chi_{18}$ is defined over $\zz$, that is, we have a unique element in $\ss_{3,18}(\zz)$ whose base change to $\cc$ is equal to $\chi_{18}$. By a slight abuse of notation we also denote this element by $\chi_{18}$. 
By \cite[Proposition~3.4]{ic} the modular form $\chi'_{18}=2^{-28} \chi_{18}$ is primitive, i.e. not zero modulo $p$ for all primes~$p$. 

We recall that one has a natural structure of reduced effective Cartier divisor on the locus $H$ of hyperelliptic curves in $\mm_3$. The following result seems to be well known.
\begin{prop} \label{divisor_chi_open} The divisor of $\chi'_{18}$ on $\mm_3$ equals $2H$.
\end{prop}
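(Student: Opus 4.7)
The plan is to (i) identify the support of $\divisor(\chi'_{18})$ on $\mm_3$ as the hyperelliptic locus $H$, and then (ii) determine the multiplicity along $H$ via a Picard group computation.

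For step (i), I would use that $\chi'_{18}$ is, up to a non-zero scalar, the product of the thirty-six even theta constants, so it vanishes at $[C] \in \mm_3$ if and only if some even theta characteristic $L$ on $C$ (a line bundle with $L^{\otimes 2} \cong K_C$) satisfies $\rmH^0(C,L) \neq 0$, via the classical relation between theta constants and theta characteristics on the Jacobian. Since $L$ is an even theta characteristic, the parity of $h^0(C,L)$ is even, so $h^0(C,L) \geq 2$; combined with $\deg L = g - 1 = 2$, this forces $L$ to be a $g^1_2$. Such a pencil exists on $C$ precisely when $C$ is hyperelliptic, in which case it is the unique hyperelliptic pencil $L_h$. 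Consequently $\divisor(\chi'_{18})$ on $\mm_3$ is supported on $H$, and I may write $\divisor(\chi'_{18}) = n H$ on $\mm_3$ for some positive integer $n$.

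For step (ii), the key observation is that $\chi'_{18}$ is a global section of $\ll_\pi^{\otimes 18}$ on $\mm_3$: via the canonical identification $\ll_\pi \cong t^*\ll$ of Section \ref{sec:modular_form}, the modular form $\chi'_{18} \in \ss_{3,18}(\zz)$ yields such a section. Its divisor class in $\Pic(\mm_3)_\qq$ is therefore $18\, \ceeone(\ll_\pi)$. On the other hand, Harer's theorem gives $\Pic(\mm_3)_\qq = \qq \cdot \ceeone(\ll_\pi)$, and the class $[H]$ in this group is $9\, \ceeone(\ll_\pi)$, obtained by restricting the classical Cornalba--Harris expression
\[ [\overline{H}] = 9\, \ceeone(\ll_{\bar\pi}) - \delta_0 - 3\delta_1 \]
in $\Pic(\overline{\mm}_3)_\qq$ to $\mm_3$, where the boundary classes $\delta_0, \delta_1$ become trivial. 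Equating $9n\, \ceeone(\ll_\pi) = 18 \, \ceeone(\ll_\pi)$ then forces $n = 2$.

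The principal technical input is the Cornalba--Harris formula for $[\overline{H}]$ on $\overline{\mm}_3$, which I would simply cite. Should one prefer a more self-contained route, a local analysis could replace it: at a hyperelliptic point $[C_0]$ the Torelli codifferential is the multiplication $\Sym^2 \rmH^0(C_0, K) \to \rmH^0(C_0, 2K)$, a map of six-dimensional spaces with one-dimensional kernel and cokernel (precisely in the hyperelliptic case); dually, the Torelli differential has a one-dimensional kernel along $H$, corresponding to the direction normal to $H$ in $\mm_3$. Any function pulled back from $\aa_3$ via the Torelli map therefore has vanishing first-order derivative in the normal direction to $H$ at a hyperelliptic curve, already yielding $n \geq 2$; equality $n = 2$ would then require a second-order argument via the heat equation, using that the Hessian of $\theta_{L_h}$ at the origin is non-zero (reflecting that $\Sym^2 \rmH^0(L_h) \to \rmH^0(K_{C_0})$ is an isomorphism on a hyperelliptic genus-three curve).
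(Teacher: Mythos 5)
Your characteristic-zero argument is correct, and it is genuinely different from the paper's, which for the statement over $\cc$ simply quotes (the proof of) Tsuyumine's theorem \cite{ts}. In effect you reprove that input: the support of $\divisor(\chi'_{18})$ is identified via the classical equivalence between the vanishing of an even theta constant and the effectivity of the corresponding even theta characteristic, with the parity argument forcing a $g^1_2$ and hence hyperellipticity; the multiplicity is then pinned down by comparing classes in $\Pic(\mm_3)_\qq$, using Harer's theorem and the classical expression $[\overline{H}]=9\,\bar{\lambda}_1-\delta_0-3\,\delta_1$. One caution: in this paper that expression is \emph{deduced from} the present proposition (see the Remark following Proposition \ref{mult_and_Ind}), so you must cite it from the independent literature (Harris--Mumford \cite{hm}, Cornalba--Harris), as you propose; done that way there is no circularity. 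What your route buys is independence from \cite{ts}; what it costs is reliance on Harer and on the class of $\overline{H}$, and note that your ``self-contained'' local alternative is only a sketch (the second-order step via the heat equation is not carried out).

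There is, however, a genuine gap relative to the statement actually being proved: in this paper $\mm_3$ is the moduli stack over $\Spec \zz$ and the proposition is an integral one, used later at finite places (e.g.\ $\ord_v(\chi'_{18})=2\,\mult_v H$ in Section \ref{sec:proof_finite}, and the exclusion of vertical components in the proofs of Propositions \ref{chi_and_K} and \ref{mult_and_Ind}). Your tools (Riemann's theory of theta characteristics, Harer's computation of $\Pic(\mm_3)_\qq$, restriction of divisor classes) live entirely over $\cc$, so a priori $\divisor(\chi'_{18})$ could still contain vertical components, i.e.\ whole fibers $\mm_3 \otimes \mathbb{F}_p$. This worry is not vacuous: the unnormalized form $\chi_{18}=2^{28}\chi'_{18}$ does vanish identically on the fiber over $2$, which is precisely why the paper works with the primitive form $\chi'_{18}$ and invokes Ichikawa's result \cite{ic}. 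The missing step is the paper's second sentence: since $\mm_3$ is smooth over $\Spec \zz$ with geometrically connected (hence irreducible) fibers, the only possible vertical prime divisors are full fibers, and primitivity of $\chi'_{18}$ (nonvanishing modulo every prime $p$) rules these out; the divisor is then the closure of its generic fiber, which your argument identifies as $2H$. Without this supplement (or some substitute), your proof establishes the proposition only over $\cc$.
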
 
\begin{proof} Over $\cc$ this follows from (the proof of) \cite[Theorem~1]{ts}.  Recall that $\mm_3$ is smooth over $\Spec(\zz)$ with geometrically connected fibers. The primitivity of $\chi'_{18}$ then gives the statement over $\zz$. 
\end{proof}
Let $S$ be a scheme. When $f \colon \xx \to S$ is a stable curve of genus three we can view $\chi'_{18}$ as a rational section of the line bundle $\ll_f^{\otimes 18}$ on $S$. In particular, let $k$ be a number field with ring of integers $O_k$, and let $X$ be a non-hyperelliptic genus three curve with semistable reduction over $k$. Let $f \colon \xx \to S=\Spec O_k$ denote the stable model of $X$ over $k$.  
From Proposition \ref{divisor_chi_open} we obtain that $\chi_{18}'$ is generically non-vanishing on $S$, and from (\ref{arithm_degree}) we obtain the formula
\[ 18 \deg \det f_* \bar{\omega}_{\xx/S} = \sum_{v \in M(k)_0} \ord_v (\chi'_{18}) \log Nv - \sum_{v \in M(k)_\infty} \log \| \chi'_{18} \|_{\Hdg,v}  \]
for the (non-normalized) stable Faltings height of $X$. 

Combining with Corollary \ref{zhang} we deduce the following result.
\begin{thm} \label{formula_GS_genusthree} Let $X$ be a non-hyperelliptic genus three curve with semistable reduction over the number field $k$. Let $f \colon \xx \to \Spec O_k$ denote the stable model of $X$ over $k$ and view $\chi'_{18}$ as a rational section of the line bundle $\ll_f^{\otimes 18}$. Then the height of a canonical Gross-Schoen cycle $\Delta$ on $X^3$ satisfies
\[  \begin{split} \pair{\Delta,\Delta}  & = 21\left( \sum_{v \in M(k)_0} \left( \frac{1}{18} \ord_v (\chi'_{18}) -\lambda(X_v) \right) \log Nv \right. \\ & \hspace{2cm} + \left. \sum_{v \in M(k)_\infty} \left( -\frac{1}{18}\log\|\chi'_{18}\|_{\Hdg,v} - \lambda(X_v) \right) \right) \, .  \end{split} \]
\end{thm}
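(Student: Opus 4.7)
The plan is to combine Corollary \ref{zhang} with the interpretation of the stable Faltings height as an arithmetic degree of $\det f_*\bar{\omega}_{\xx/S}$ computed via the modular form $\chi'_{18}$. Specializing Corollary \ref{zhang} to the genus three case, the prefactor becomes $6(2g+1)/(g-1) = 21$, so that
\[ \pair{\Delta,\Delta} = 21 \left( \deg \det f_* \bar{\omega}_{\xx/S} - \sum_{v \in M(k)} \lambda(X_v) \log Nv \right). \]

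The next step is to express $\deg \det f_* \bar{\omega}_{\xx/S}$ by evaluating the arithmetic degree on the rational section provided by $\chi'_{18}$. To legitimize this, I would invoke Proposition \ref{divisor_chi_open}, which identifies the zero divisor of $\chi'_{18}$ on $\mm_3$ with twice the hyperelliptic locus. Since $X$ is non-hyperelliptic by hypothesis, the classifying map $J \colon S \to \overline{\mm}_3$ attached to $f$ sends the generic point of $S$ into the complement of $H$, so that $J^*\chi'_{18}$ is a non-zero rational section of $\ll_f^{\otimes 18}$. Applying the arithmetic-degree formula (\ref{arithm_degree}) to this section and dividing by $18$ gives
\[ \deg \det f_* \bar{\omega}_{\xx/S} = \frac{1}{18}\sum_{v \in M(k)_0} \ord_v(\chi'_{18})\log Nv \;-\; \frac{1}{18}\sum_{v \in M(k)_\infty} \log \|\chi'_{18}\|_{\Hdg,v}. \]

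Substituting this identity into the specialization of Corollary \ref{zhang}, splitting the sum $\sum_{v \in M(k)} \lambda(X_v) \log Nv$ into its non-archimedean and archimedean parts in accordance with the paper's convention for $Nv$, and regrouping the terms place by place produces exactly the displayed formula. The whole argument is a substitution; the only substantive point to verify is that $\chi'_{18}$ defines a non-zero rational section of $\ll_f^{\otimes 18}$, which is where the non-hyperellipticity assumption and Proposition \ref{divisor_chi_open} are used. Consequently I do not anticipate any serious obstacle in carrying out the proof.
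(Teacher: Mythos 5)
Your proposal follows exactly the paper's argument: Proposition \ref{divisor_chi_open} guarantees that $\chi'_{18}$ is a non-zero rational section of $\ll_f^{\otimes 18}$ for non-hyperelliptic $X$, the arithmetic degree formula (\ref{arithm_degree}) applied to this section computes $18 \deg \det f_*\bar{\omega}_{\xx/S}$, and substituting into Corollary \ref{zhang} with $g=3$ (prefactor $21$) and regrouping place by place gives the stated identity. This is correct and coincides with the proof in the paper.
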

We will take Theorem \ref{formula_GS_genusthree} as a starting point in our proof of Theorem \ref{main_intro}.

\section{The Horikawa index} \label{sec:horikawa}

Let $S = \Spec R$ be the spectrum of a discrete valuation ring $R$. Let $f \colon \xx \to S$ be a stable curve with generic fiber smooth and non-hyperelliptic of genus three. Denote by $v$ the closed point of $S$. As above we view $\chi'_{18}$ as a rational section of the line bundle $\ll_f^{\otimes 18}$ on $S$. Then $\chi'_{18}$ is generically non-vanishing by Proposition \ref{divisor_chi_open}. The aim of this section is to give a lower bound on the multiplicity $\ord_v (\chi'_{18})$ in terms of the reduction graph of the special fiber. The result is displayed in Corollary \ref{lower_bound_ord}. We start by writing down an expression for the divisor $\divisor(\chi'_{18})$ of $\chi'_{18}$ on the moduli stack $\overline{\mm}_3$. 

Let $S$ be a scheme and let $f \colon \xx \to S$ be a stable curve of genus three. Then on $S$ we have the locally free sheaves $ \ee_f = f_* \omega_{\xx/S} $ and $ \gg_f = f_* \omega^{\otimes 2}_{\xx/S} $ as well as a natural map
\[ \nu_f \colon \Sym^2 \ee_f \to \gg_f \, , \quad \eta_1 \cdot \eta_2 \mapsto \eta_1 \otimes \eta_2 \, , \]
functorially in $f$. The map $\nu_f$ is surjective if $f$ is smooth and nowhere hyperelliptic. Both $\Sym^2 \ee_f$ and $\gg_f$ have rank six and we thus we have a natural map of invertible sheaves
\[ \det \nu_f \colon \det  \Sym^2 \ee_f  \to \det \gg_f \, , \]
functorially in $f$. 
We may and do view $\det \nu_f$ as a global section $s_f$ of the invertible sheaf 
$ (\det \Sym^2 \ee_{f})^{\otimes -1} \otimes \det \gg_f $ 
on $S$.  It has support on the locus of hyperelliptic fibers on $S$. Standard multilinear algebra yields a canonical isomorphism
\[ \det \Sym^2 \ee_f \isom  \ll_{f}^{\otimes 4}  \]
of invertible sheaves on $S$, where $\ll_f = \det \ee_f$ as before, and this shows that we may as well view $s_f$ as a global section of the invertible sheaf 
$ \ll_{f}^{\otimes - 4} \otimes \det \gg_{f} $ on $S$.  
\begin{prop} \label{smooth} Let $\pi \colon \mathcal{C}_3 \to \mm_3$ be the universal smooth curve of genus three. The section $s_\pi$ is not identically equal to zero, and the divisor of $s_\pi$ on  $\mm_3$ is equal to the reduced hyperelliptic divisor $H$.
\end{prop}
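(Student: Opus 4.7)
The plan is to analyze the map $\nu_\pi \colon \Sym^2 \ee_\pi \to \gg_\pi$ of rank-six vector bundles on $\mm_3$ fiber by fiber, exploiting that $s_\pi = \det \nu_\pi$ vanishes precisely on the degeneracy locus of $\nu_\pi$. The fiber of $\nu_\pi$ at a smooth curve $[C] \in \mm_3$ is the multiplication map $\mu_C \colon \Sym^2 \rmH^0(C, \omega_C) \to \rmH^0(C, \omega_C^{\otimes 2})$.

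For $C$ non-hyperelliptic of genus three, the canonical map embeds $C$ as a smooth plane quartic and $\mu_C$ is surjective by Max Noether's theorem on the projective normality of canonically embedded non-hyperelliptic curves; since both source and target are six-dimensional, $\mu_C$ is an isomorphism. This establishes $s_\pi \not\equiv 0$ and shows that the support of $\divisor(s_\pi)$ is contained in $H$. For $C$ hyperelliptic of genus three with hyperelliptic cover $\phi \colon C \to \PP^1$ we have $\omega_C \cong \phi^* \oo_{\PP^1}(2)$, and the canonical map factors as $C \xrightarrow{\phi} \PP^1 \to \PP^2$ with the second arrow a conic embedding. Consequently the image of $\mu_C$ is the pullback of the image of the surjective multiplication $\Sym^2 \rmH^0(\PP^1, \oo(2)) \to \rmH^0(\PP^1, \oo(4))$, a five-dimensional subspace, so $\mu_C$ has rank exactly five. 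Combining both cases shows that the support of $\divisor(s_\pi)$ equals $H$ and that $\nu_\pi$ drops rank by exactly one at each point of the integral Cartier divisor $H$, whence $\det \nu_\pi$ vanishes to order precisely one along $H$ and $\divisor(s_\pi) = H$.

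The only delicate point is the multiplicity-one assertion, which amounts to the standard linear-algebra fact that if a square matrix over a discrete valuation ring has rank one less than its size modulo the uniformizer, then its determinant has valuation one. As an independent verification one may compute classes: Mumford's formula gives $\det \gg_\pi \cong \ll_\pi^{\otimes 13}$ on $\mm_3$, which combined with $\det \Sym^2 \ee_\pi \cong \ll_\pi^{\otimes 4}$ realizes $s_\pi$ as a section of $\ll_\pi^{\otimes 9}$; since Proposition \ref{divisor_chi_open} yields $[H] = 9\lambda$ in $\Pic(\mm_3)_\qq$ with $\lambda$ non-torsion, the effective divisor $\divisor(s_\pi) = a H$ obtained from the first two steps must have $a = 1$.
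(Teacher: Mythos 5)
Your fiberwise analysis is fine as far as it goes: it shows $s_\pi\not\equiv 0$ and that the support of $\divisor(s_\pi)$ is exactly the hyperelliptic locus in every characteristic, hence $\divisor(s_\pi)=aH$ for some integer $a\geq 1$. The problem is the justification you offer for $a=1$. The ``standard linear-algebra fact'' you invoke is false: the matrix $\mathrm{diag}(1,\ldots,1,\pi^2)$ over a discrete valuation ring has corank one modulo the uniformizer but determinant of valuation two. Knowing that $\nu_\pi$ has fiberwise corank one at the (generic) points of $H$ only tells you that the cokernel of $\nu_\pi$ is cyclic at the generic point of $H$, i.e.\ locally of the form $\oo/(\pi^a)$; it gives no control whatsoever on $a$, and bounding $a$ is precisely the content of the reducedness assertion. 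So the step you yourself single out as the delicate one is not established by your first argument.

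Your proof is rescued only by what you present as an ``independent verification,'' which should in fact be promoted to the proof: since $\det\Sym^2\ee_\pi\isom\ll_\pi^{\otimes 4}$ and Mumford's functorial Riemann--Roch gives $\det\gg_\pi\isom\ll_\pi^{\otimes 13}$ on $\mm_3$, the section $s_\pi$ lives in $\ll_\pi^{\otimes 9}$, so $a[H]=9[\ll_\pi]$ in $\Pic(\mm_3)$, while Proposition \ref{divisor_chi_open} gives $2[H]=18[\ll_\pi]$; hence $(a-1)[\ll_\pi]$ is torsion, and since the class of $\ll_\pi$ is non-torsion in $\Pic(\mm_3\otimes\cc)$ (Harer, Arbarello--Cornalba --- this needs a citation, you assert it without one), $a=1$. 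This is non-circular, because Proposition \ref{divisor_chi_open} is proved via Tsuyumine's theorem and primitivity, independently of the statement at hand. Note that this route is genuinely different from the paper's: there, Kodaira--Spencer identifies $\nu_\pi$ with the cotangent map of the Torelli morphism $t\colon\mm_3\to\aa_3$, so that $\mathrm{coker}\,\nu_\pi\cong\Omega_{\mm_3/\aa_3}\cong\oo_R$ with $R$ the ramification divisor, and $R=H$ with multiplicity one because $t$ is finite, ramified exactly along the hyperelliptic locus, and of generic degree two. The paper thus obtains reducedness structurally, with no appeal to $\chi_{18}$ or to the Picard group of $\mm_3$, whereas your argument trades that geometric input for Proposition \ref{divisor_chi_open} plus the known structure of $\Pic(\mm_3)$.
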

\begin{proof} The map $\nu_\pi$ is generically an isomorphism and this shows that $s_\pi$ is not identically equal to zero. Let $\Sigma = \divisor s_\pi$. Let $t \colon \mm_3 \to \aa_3$ be the Torelli map. By \cite[Section~1.3]{mo-oo} the map $t$ is finite. Let $R$ denote the ramification divisor of $t$.  By (\ref{KS-ab-schemes}) and (\ref{KS-curves}) we have canonical isomorphisms
\[  t^* \Omega_{\aa_3/\zz} \isom \Sym^2 \ee_\pi \, , \quad \gg_\pi \isom \Omega_{\mm_3/\zz} \, ,   \]
and the map $ t^* \Omega_{\aa_3/\zz} \to \Omega_{\mm_3/\zz}$ obtained by concatenating these isomorphisms with $\nu_\pi \colon \Sym^2 \ee_\pi \to \gg_\pi$ coincides with the canonical structure map. The cokernel of the latter map is $\Omega_{\mm_3/\aa_3} \cong \oo_R$ and the cokernel of $\nu_\pi$ is $\oo_\Sigma$. We find that $\Sigma = R$. By \cite[Remark~1.1]{mo-oo} the map $t$ is ramified precisely along the hyperelliptic locus. As $t$ has generic degree two we find $R=H$. Combining we obtain $\Sigma = H$.
\end{proof}
Let $\bar{\pi} \colon \overline{\mathcal{C}}_3 \to \overline{\mm}_3$ be the universal stable curve of genus three. Let $K$ denote the divisor of $s$ on $\overline{\mm}_3$ and let $\Delta$ denote the divisor of singular curves on $\overline{\mm}_3$.
\begin{prop} \label{chi_and_K}
View $\chi'_{18}$ as a rational section of the line bundle $ \ll_{\bar{\pi}}^{\otimes 18}$ on $\overline{\mm}_3$. We then have the equality of effective Cartier divisors
\[ \divisor(\chi'_{18})  = 2\,K + 2\,\Delta   \]
on $\overline{\mm}_3$. 
\end{prop}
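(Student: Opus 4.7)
The strategy I would take is to reduce the claimed divisor identity to a class identity in $\Pic(\overline{\mm}_3)_{\qq}$, handled via Mumford's functorial Riemann-Roch.

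First, combining Propositions \ref{divisor_chi_open} and \ref{smooth}, both $\divisor(\chi'_{18})$ and $2K+2\Delta$ restrict to the effective divisor $2H$ on the open substack $\mm_3 \subset \overline{\mm}_3$ (the boundary $\Delta$ is of course empty there). Hence the difference
\[
D := \divisor(\chi'_{18}) - 2K - 2\Delta
\]
is a Weil divisor supported on the boundary $\Delta_0 \cup \Delta_1$, necessarily of the form $D = a_0 \Delta_0 + a_1\Delta_1$ for some integers $a_0,a_1$.

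Next I would compute $[D]$ in $\Pic(\overline{\mm}_3)_{\qq}$. Write $\lambda_n = [\det \bar\pi_*\omega_{\overline{\mathcal{C}}_3/\overline{\mm}_3}^{\otimes n}]$, so $\lambda_1 = [\ll_{\bar\pi}]$ and $\lambda_2 = [\det \gg_{\bar\pi}]$. Since $\chi'_{18}$ is a rational section of $\ll_{\bar\pi}^{\otimes 18}$, its divisor class is $18\lambda_1$. Since $s$ is a global section of $\ll_{\bar\pi}^{\otimes -4}\otimes \det \gg_{\bar\pi}$, we obtain $[K] = -4\lambda_1 + \lambda_2$. The key input is Mumford's functorial Riemann-Roch \cite{mu}: applied to the universal stable curve $\bar\pi$ in genus three, the standard Grothendieck-Riemann-Roch calculation yields
\[
\lambda_2 = 13\lambda_1 - \delta
\]
in $\Pic(\overline{\mm}_3)_{\qq}$, where $\delta = \delta_0+\delta_1$. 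Substituting gives $[K] = 9\lambda_1 - \delta$, and therefore
\[
[D] = 18\lambda_1 - 2(9\lambda_1 - \delta) - 2\delta = 0.
\]

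Finally, I would invoke the classical fact that the classes $\lambda_1,\delta_0,\delta_1$ are $\qq$-linearly independent in $\Pic(\overline{\mm}_3)_{\qq}$ to conclude $a_0 = a_1 = 0$, yielding $\divisor(\chi'_{18}) = 2K + 2\Delta$. The only non-elementary ingredient is the Mumford relation $\lambda_2 = 13\lambda_1 - \delta$; this is precisely where Mumford's functorial Riemann-Roch takes over the role that Hirzebruch-Riemann-Roch plays in Yamaki's global approach, in line with the strategy advertised in the introduction. In particular the argument sidesteps any direct analysis of the Fourier expansion of $\tilde\chi_{18}$ at degenerate period matrices or of the vanishing orders of individual Thetanullwerte along the boundary.
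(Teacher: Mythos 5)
Your proposal is correct, and it shares the paper's two main inputs (the restriction to $\mm_3$ via Propositions \ref{divisor_chi_open} and \ref{smooth}, and Mumford's relation $\det \gg_{\bar{\pi}} \cong \ll_{\bar{\pi}}^{\otimes 13} \otimes \oo(-\Delta)$, i.e.\ $\lambda_2 = 13\lambda_1-\delta$), but it closes the argument differently. The paper never passes to divisor classes: it observes that $\chi'_{18}\otimes s^{\otimes -2}$ and the square of the Mumford--Moret-Bailly isomorphism $\mu$ are both trivializing sections of $\ll_\pi^{\otimes 26}\otimes(\det\gg_\pi)^{\otimes -2}$ over $\mm_3$, invokes the fact that the only invertible regular functions on $\mm_3$ are $\pm 1$ to conclude they agree up to sign, and then uses the extension of $\mu$ over $\overline{\mm}_3$ (with its $\oo(\Delta)$-twist) to see that $\chi'_{18}\otimes s^{\otimes -2}$ extends to a trivializing section over all of $\overline{\mm}_3$, which gives the divisor identity directly. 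You instead prove the class identity $[\divisor(\chi'_{18})-2K-2\Delta]=0$ in $\Pic(\overline{\mm}_3)_\qq$ and then appeal to the independence of $\lambda_1,\delta_0,\delta_1$ (Arbarello--Cornalba) to kill the boundary coefficients. This works, but it imports an extra nontrivial input on $\Pic(\overline{\mm}_3)$ that the paper's section-level argument avoids, and since the moduli stack here is taken over $\zz$, you should add one sentence noting that the difference divisor $D$ has no vertical components (any fiber $\overline{\mm}_3\otimes\mathbb{F}_p$ meets $\mm_3$, where $D$ vanishes), so that one may restrict to the complex fiber where the independence of the boundary classes is the classical statement; with that remark your argument is complete. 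The trade-off is that your route is perhaps more robust and mechanical (pure class bookkeeping), while the paper's route yields the equality of Cartier divisors over $\zz$ with no knowledge of $\Pic(\overline{\mm}_3)$ beyond the unit group of $\mm_3$.
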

\begin{proof}  By Proposition \ref{smooth} we have $\divisor s = H$ on $\mm_3$. From Proposition~\ref{divisor_chi_open} we recall that over $\mm_3$, the modular form $\chi'_{18}$ is a global section of $\ll_\pi^{\otimes 18}$ with divisor $2H$. We deduce that $\chi'_{18} \otimes s^{\otimes -2}$ is a trivializing section of the invertible sheaf $\ll_\pi^{\otimes 26} \otimes (\det \gg_\pi)^{\otimes -2}$ over $\mm_3$. Mumford's functorial Riemann-Roch, see \cite[Th\'eor\`eme~2.1 and equation (2.1.2)]{mb},  restricted to $\mm_3$ gives a canonical isomorphism
\[ \mu \colon \det \gg_\pi \isom \ll_\pi^{\otimes 13} \] 
of invertible sheaves on $\mm_3$. Hence, the square $\mu^{\otimes 2}$ of $\mu$ is another trivializing section of the invertible sheaf  $\ll_\pi^{\otimes 26} \otimes (\det \gg_\pi)^{\otimes -2}$ over $\mm_3$. Now we recall, cf. for example \cite[Lemme~2.2.3]{mb}, that the only invertible regular functions on $\mm_3$ are $\pm 1$. This allows us to conclude that  $\chi'_{18} \otimes s^{\otimes -2}$ and $\mu^{\otimes 2}$ are equal up to a sign. By Mumford's functorial Riemann-Roch on $\overline{\mm}_3$ the isomorphism $\mu$ extends into an isomorphism
\[  \det \gg_{\bar{\pi}} \otimes \oo(\Delta) \isom  \ll_{\bar{\pi}}^{\otimes 13}  \]
of invertible sheaves over $\overline{\mm}_3$. This gives that $\chi'_{18} \otimes s^{\otimes -2}$ extends as a trivializing section of the trivial line bundle $ \ll_{\bar{\pi}}^{\otimes 26} \otimes (\det \gg_{\bar{\pi}})^{\otimes -2} \otimes \oo(-2\Delta)$ on $\overline{\mm}_3$. The required equality of effective Cartier divisors follows.
\end{proof}
Let again $S = \Spec R$ be the spectrum of a discrete valuation ring $R$. Let $f \colon \xx \to S$ be a stable curve with generic fiber smooth and non-hyperelliptic of genus three. The morphism $\nu \colon \Sym^2 \ee_f \to \gg_f $ is surjective at the generic point, hence is globally injective. Let $\QQ_f$ denote the cokernel of $\nu$. Then $\QQ_f$ is a finite length $\oo_S$-module, and we have an exact sequence of coherent sheaves on $S$ with canonical maps,
\begin{equation} \label{ses} 0 \to \Sym^2 \ee_f \to \gg_f \to \QQ_f \to 0 \, . 
\end{equation}
Let $v$ denote the closed point of $S$. Following M.\ Reid \cite{reid}, K.\ Konno \cite{ko} and K.\ Yamaki \cite{ya} \cite{yaf} we call the integer $\length_{\oo_{S}} \QQ_f$ the \emph{Horikawa index} of $f$ at $v$, notation $\mathrm{Ind}_v(f)$.  Let $\Gamma_v$ denote the metrized graph associated to the stable curve~$f$.
\begin{prop}   \label{chi_and_Ind} View $\chi'_{18}$ as a rational section of the line bundle $\ll_f^{\otimes 18}$ on $S$. Then the equality
\[  \ord_v (\chi'_{18})  = 2 \, \mathrm{Ind}_v(f)  + 2\, \delta(\Gamma_v) \]
holds. In particular, $\chi'_{18}$ is a global section of $\ll_f^{\otimes 18}$.
\end{prop}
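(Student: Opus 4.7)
The plan is to derive this local identity by pulling back the global divisorial equality on $\overline{\mm}_3$ provided by Proposition \ref{chi_and_K} along the classifying map $J \colon S \to \overline{\mm}_3$ of $f$. By Proposition \ref{chi_and_K} we have
\[ \divisor(\chi'_{18}) = 2K + 2\Delta \]
on $\overline{\mm}_3$, where $K = \divisor(s_{\bar{\pi}})$ and $\Delta = \sum_{h=0}^{[g/2]} \Delta_h$ is the total boundary divisor. Since $\chi'_{18}$ is a rational section of the invertible sheaf $\ll_{\bar{\pi}}^{\otimes 18}$ and $\ll_f = J^* \ll_{\bar{\pi}}$, we obtain
\[ \ord_v(\chi'_{18}) = \mult_v J^* \divisor(\chi'_{18}) = 2\,\mult_v J^* K + 2\,\mult_v J^* \Delta \, , \]
which reduces the problem to identifying the two multiplicities on the right.

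For the boundary term, the equation $\delta_h(\overline{\Gamma}_v) = \mult_v J^* \Delta_h$ recalled at the end of Section \ref{sec:nonarch} gives, upon summing over $h = 0, \ldots, [g/2]$, the identity $\mult_v J^* \Delta = \delta(\Gamma_v)$.

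For the term $\mult_v J^* K$, I would use the functoriality of the construction $f \mapsto s_f$: the section $s_f$ is the pullback $J^* s_{\bar{\pi}}$, so its order of vanishing at $v$ equals $\mult_v J^* K$ provided this is finite (which it is, since $f$ is generically non-hyperelliptic and hence $\nu_f$ is generically an isomorphism, whence $s_f$ is nonzero at the generic point of $S$). Now I would identify $\ord_v(s_f)$ with the Horikawa index via the exact sequence (\ref{ses}): since $\nu_f \colon \Sym^2 \ee_f \to \gg_f$ is an injective map of rank-six locally free $\oo_S$-modules with finite-length cokernel $\QQ_f$, the elementary divisor theorem over the discrete valuation ring $R$ gives
\[ \ord_v(\det \nu_f) = \length_{\oo_S} \QQ_f = \mathrm{Ind}_v(f) \, , \]
and $\det \nu_f$ is, by construction, the section $s_f$. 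Combining the three steps yields the required equality.

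The ``in particular'' clause is then immediate: both $\mathrm{Ind}_v(f) \geq 0$ and $\delta(\Gamma_v) \geq 0$, so $\ord_v(\chi'_{18}) \geq 0$ for every closed point $v$, and hence $\chi'_{18}$, a priori a rational section of $\ll_f^{\otimes 18}$, is in fact a global section. The main technical point is the commutative-algebra identification in the third step, relating the valuation of the determinant of an injective map of locally free modules over a DVR to the length of its cokernel; everything else is straightforward functoriality and bookkeeping.
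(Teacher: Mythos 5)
Your proposal is correct and follows essentially the same route as the paper: pull back the relation $\divisor(\chi'_{18}) = 2K + 2\Delta$ of Proposition \ref{chi_and_K} along the classifying map, identify $\mult_v J^*\Delta$ with $\delta(\Gamma_v)$ via $\delta_h(\overline{\Gamma}_v) = \mult_v J^*\Delta_h$, and identify $\ord_v(s_f) = \ord_v(\det\nu_f)$ with $\mathrm{Ind}_v(f)$. Your appeal to the elementary divisor theorem over the discrete valuation ring is exactly the content of the paper's argument, which phrases the same computation via the Knudsen--Mumford determinant of $\QQ_f$ and the structure theorem for finitely generated modules over $R$.
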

\begin{proof}  The Knudsen-Mumford determinant construction \cite{km} associates to each coherent sheaf $\ff$ on $S$ a functorial invertible sheaf $\det \ff$ on $S$, by using locally free resolutions. From the locally free resolution (\ref{ses}) of $\QQ_f$ we obtain a canonical isomorphism $(\det \Sym^2 \ee_f)^{\otimes -1} \otimes \det \gg_f \isom \det \QQ_f$ of invertible sheaves, and we find that $s=\det \nu$ can be viewed as a canonical non-zero global section of $\det \QQ_f$. By Proposition \ref{chi_and_K} its divisor $K$ satisfies the relation  $\divisor(\chi'_{18})  = 2\,K + 2\,\Delta$ in $\Div(S)$. This gives the identity $ \ord_v (\chi'_{18})  = 2 \ord_v(s) + 2\,\delta(\Gamma_v)$. We are thus left to prove that  $ \mathrm{Ind}_v(f)=\ord_v(s)$. By the structure theorem for finitely generated $R$-modules we can find effective Cartier divisors $K_i$ on $S$ uniquely determined by $\QQ_f$ together with a decomposition $\QQ_f = \bigoplus_i \oo/\oo(-K_i)$ of $\QQ_f$ as a direct sum of cyclic modules. The exact sequences 
\[ 0 \to \oo(-K_i) \to \oo \to \oo/\oo(-K_i) \to 0 \]
show that we have canonical identifications $ \det ( \oo/\oo(-K_i) ) = \oo(K_i)$ and by iteration we find a string of canonical identifications
\[ \det \QQ_f = \bigotimes_i \det ( \oo/\oo(-K_i) ) = \bigotimes_i \oo(K_i) = \oo(\sum_i K_i) \, . \] 
Upon noting that $\det \QQ_f = \oo(K)$ canonically we obtain the equality $K=\sum_i K_i$ of effective Cartier divisors on $S$. This implies
\[ \mathrm{Ind}_v(f)  = \sum_i \length ( \oo / \oo(-K_i) ) = \sum_i \mult_v K_i = \mult_v K = \ord_v s \, ,  \]
which is what we needed to show.
\end{proof}
Let $\overline{\Gamma}=(\Gamma,\mathbf{q})$ be a pm-graph of genus three. From \cite{ya} \cite{yaf} we recall the notion of a pair of edges of $h$-type on $\overline{\Gamma}$, and the definition of an invariant  $h(\overline{\Gamma})$. We call a vertex $v \in V(\overline{\Gamma})$  \emph{eliminable} if $v$ has valence two and satisfies $\mathbf{q}(v)=0$. We assume that $\overline{\Gamma}$ has no eliminable vertices. Then for $e, e'$ two distinct edges of $\overline{\Gamma}$ we denote by $\overline{\Gamma}^{ \{e,e'\}  }$ the contraction of all edges except $e, e'$ on $\overline{\Gamma}$. The pair $\{e,e'\}$ is called a \emph{pair of $h$-type} on $\overline{\Gamma}$ if  $\overline{\Gamma}^{ \{e,e'\}  }$ is an irreducible graph with precisely two vertices, and the induced polarization $\mathbf{q}^{ \{e,e'\}  }$ takes value~$1$ on both vertices. It can be shown \cite[Lemma~2.1]{ya} that $\overline{\Gamma}$ has at most one pair of edges of $h$-type. 

Now, if $\overline{\Gamma}$ has a pair $\{e, e' \}$ of edges of $h$-type, then we set $h(\overline{\Gamma}) = \min \{ m_1, m_2 \}$, where $m_1, m_2$ are the weights of the edges $e, e'$. If $\overline{\Gamma}$ has no pair of edges of $h$-type, then we set $h(\overline{\Gamma}) =0$. 

We continue to work with  the spectrum $S=\Spec R$ of a discrete valuation ring $R$ and a stable curve $f \colon \xx \to S$ of genus three, whose generic fiber is smooth and non-hyperelliptic.
Let $\overline{\Gamma}_v$ denote the pm-graph associated to $f$. Note that $\overline{\Gamma}_v$ has no eliminable vertices. Let $h(\overline{\Gamma}_v)$ be its $h$-invariant as above.   Let $e, e'$ be nodes of $\xx_v$. It is easy to see that the corresponding pair $\{ e,e' \}$ of edges in $\overline{\Gamma}_v$ is a pair of edges of $h$-type if and only if both $e, e'$ are of type $0$, and the partial normalization of $\xx_v$ at $\{e ,e'\}$ has exactly two connected components, both of genus one. 
\begin{prop} \label{yamaki_lower_bound} The inequality 
\[  \mathrm{Ind}_v(f) \geq h(\overline{\Gamma}_v) + 2 \,\delta_1(\overline{\Gamma}_v)   \]
holds.
\end{prop}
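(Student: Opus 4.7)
The plan is to establish the inequality by decomposing $\mathrm{Ind}_v(f)$ into local contributions coming from the nodes of $\xx_v$, following the approach of Yamaki \cite{yaf} but in the local setting of a single discrete valuation ring. Recall from the proof of Proposition \ref{chi_and_Ind} that $\mathrm{Ind}_v(f) = \ord_v(s_f)$, where $s_f = \det \nu_f$ is the natural section of $(\det \Sym^2 \ee_f)^{\otimes -1} \otimes \det \gg_f$ on $S$. Since the generic fiber of $f$ is smooth and non-hyperelliptic, $s_f$ is nonzero at the generic point of $S$, and its vanishing at $v$ records the failure of the multiplication map $\nu_f \colon \Sym^2 \ee_f \to \gg_f$ to be surjective at $v$. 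First I would show that $\ord_v(s_f)$ is bounded below by a sum of local contributions indexed by the combinatorial features of $\overline{\Gamma}_v$ that force a degeneration of the canonical ring: the edges of type~$1$, and the (unique, if it exists) pair of $h$-type edges. Since a type-$1$ node and a node belonging to an $h$-type pair have different types (the former disconnects the dual graph upon removal, the latter does not), these features correspond to disjoint nodes of $\xx_v$; after \'etale localization the section $s_f$ factors as a product over nodes, so the local contributions add.

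Next I would compute the type-$1$ contribution. For an edge $e$ of type~$1$ with thickness $\ell(e) = m$, the partial normalization of $\xx_v$ at the corresponding node separates a genus-$1$ component from a genus-$2$ component, and the latter is necessarily hyperelliptic. The hyperelliptic involution yields a quadratic relation on the canonical ring of the genus-$2$ piece. Working \'etale-locally at the node, where $\xx$ has the standard model $\Spec \oo_S[x,y]/(xy - \pi^m)$ for $\pi$ a uniformizer of $R$, I would lift this relation to a global section of $\gg_f$ and show that its class in $\QQ_f = \gg_f / \Sym^2 \ee_f$ is annihilated by $\pi^{2m}$ but not by $\pi^{2m-1}$. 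The factor of~$2$ arises from the fact that $xy \equiv \pi^m$ enters quadratically in local generators of $\omega^{\otimes 2}$ near the node. This contributes at least $2m$ to $\ord_v(s_f)$.

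For a pair $\{e, e'\}$ of $h$-type edges with thicknesses $m_1, m_2$, the partial normalization at both nodes splits $\xx_v$ into two genus-$1$ pieces. On each such piece the restriction of $\omega$ has degree $2$, so the associated linear system defines a $2$-to-$1$ morphism to $\PP^1$, again producing a degenerate quadratic relation. Using the local models $\Spec \oo_S[x_i, y_i]/(x_i y_i - \pi^{m_i})$ for $i=1,2$ at the two nodes, I would show that the lift of this quadratic relation contributes $\min(m_1, m_2) = h(\overline{\Gamma}_v)$ to $\ord_v(s_f)$. The asymmetric $\min$ arises because the relation can be adjusted by a section vanishing along either of the two nodes, so its $\pi$-adic order is governed by the thinner of the two connecting edges. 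Summing the contributions from all type-$1$ edges and the possible $h$-type pair gives
\[ \mathrm{Ind}_v(f) = \ord_v(s_f) \geq 2 \sum_{e \text{ of type } 1} \ell(e) + h(\overline{\Gamma}_v) = 2 \delta_1(\overline{\Gamma}_v) + h(\overline{\Gamma}_v), \]
as required.

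The hard part will be the explicit local bookkeeping in the two cases above: identifying the precise quadratic relation produced by the hyperelliptic or degenerate-canonical structure, and computing the $\pi$-adic order of its class in $\QQ_f$. Yamaki \cite{yaf} performs the analogous computation globally using Hirzebruch-Riemann-Roch; the present local analogue should rest on a careful application of Mumford's functorial Riemann-Roch, consistent with the strategy already employed in Proposition \ref{chi_and_Ind}.
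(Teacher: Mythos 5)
The paper does not reprove this statement at all: its proof is a single citation to Yamaki, \cite[Proposition~3.7]{yaf}. Your proposal is therefore attempting something more ambitious than the paper, and as it stands it is a plan rather than a proof: the two decisive computations (the order $2m$ contributed by a type-$1$ node of thickness $m$, and the order $\min\{m_1,m_2\}$ contributed by an $h$-type pair) are exactly the points you defer to ``hard local bookkeeping,'' and nothing in the sketch pins them down. The heuristic ``$xy\equiv\pi^m$ enters quadratically, hence a factor $2$'' does not constitute an argument; the factor $2$ in $2\,\delta_1$ has to come out of a genuine analysis of how far the image of $\Sym^2\ee_f\to\gg_f$ is from saturated, not from counting degrees of $xy$ in local generators of $\omega^{\otimes 2}$.

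There is also a structural gap in the reduction step. The section $s_f=\det\nu_f$ and the cokernel $\QQ_f$ live on the base $S=\Spec R$, not on $\xx$, so ``\'etale localization at the nodes of $\xx_v$'' does not give a factorization of $s_f$ into local factors indexed by nodes: the failure of surjectivity of $\nu_f$ at $v$ is a statement about the multiplication map $\Sym^2\rmH^0(\xx_v,\omega)\to\rmH^0(\xx_v,\omega^{\otimes 2})$ on the whole special fiber (e.g.\ for an elliptic-tail fiber the canonical image is a conic, which is a global feature), and the thickness-dependence then requires an infinitesimal analysis over $R$ to all orders, not just modulo $\pi$. You would need to prove that the contributions from the type-$1$ nodes and from the $h$-type pair are independent, i.e.\ that the corresponding quadratic relations remain linearly independent in $\QQ_f$ with the claimed $\pi$-adic orders; asserting that ``the local contributions add'' begs precisely this question. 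Unless you intend to actually carry out this analysis (essentially redoing Yamaki's Proposition~3.7), the efficient route is the paper's: quote \cite[Proposition~3.7]{yaf}, whose proof supplies the bookkeeping you have postponed.
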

\begin{proof} This is \cite[Proposition~3.7]{yaf}.
\end{proof}
Combining Propositions \ref{chi_and_Ind} and \ref{yamaki_lower_bound} we find
\begin{cor} \label{lower_bound_ord} The inequality
\[  \ord_v (\chi'_{18})  \geq 2 \, h(\overline{\Gamma}_v)  + 2\, \delta_0(\overline{\Gamma}_v) + 6 \,\delta_1(\overline{\Gamma}_v) \]
holds.
\end{cor}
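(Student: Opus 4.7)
The plan is to simply combine the two preceding propositions with the decomposition of the volume $\delta(\Gamma_v)$ into type contributions. Since the genus is three, we have $[g/2] = 1$, so every edge of the pm-graph $\overline{\Gamma}_v$ is either of type $0$ or of type $1$, and hence
\[ \delta(\Gamma_v) = \delta_0(\overline{\Gamma}_v) + \delta_1(\overline{\Gamma}_v) \, . \]
This is the only structural input beyond the two cited results.

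Starting from Proposition \ref{chi_and_Ind}, I would write
\[ \ord_v (\chi'_{18}) = 2 \, \mathrm{Ind}_v(f) + 2\, \delta(\Gamma_v) \, . \]
Then I would apply Proposition \ref{yamaki_lower_bound}, which gives $\mathrm{Ind}_v(f) \geq h(\overline{\Gamma}_v) + 2\,\delta_1(\overline{\Gamma}_v)$, to obtain
\[ \ord_v (\chi'_{18}) \geq 2\,h(\overline{\Gamma}_v) + 4\,\delta_1(\overline{\Gamma}_v) + 2\,\delta(\Gamma_v) \, . \]
Substituting $\delta(\Gamma_v) = \delta_0(\overline{\Gamma}_v) + \delta_1(\overline{\Gamma}_v)$ then yields the stated bound
\[ \ord_v (\chi'_{18}) \geq 2\,h(\overline{\Gamma}_v) + 2\,\delta_0(\overline{\Gamma}_v) + 6\,\delta_1(\overline{\Gamma}_v) \, . \]

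There is no real obstacle here: both propositions have just been proved, and the only arithmetic step is recognizing that in genus three the only possible edge types are $0$ and $1$. The corollary is essentially a bookkeeping consequence of the preceding work, serving mainly to package the lower bound in a form convenient for the later combinatorial analysis of the non-archimedean contributions to the height of the Gross-Schoen cycle in Theorem \ref{formula_GS_genusthree}.
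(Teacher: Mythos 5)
Your proof is correct and is exactly the paper's argument: the paper derives the corollary by combining Propositions \ref{chi_and_Ind} and \ref{yamaki_lower_bound}, with the same implicit use of $\delta(\Gamma_v)=\delta_0(\overline{\Gamma}_v)+\delta_1(\overline{\Gamma}_v)$ in genus three. No issues.
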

We saw in Section \ref{chi} that one has a natural structure of reduced effective Cartier divisor on the hyperelliptic locus $H$ in $\mm_3$. Let $\overline{H}$ be the closure of $H$ in $\overline{\mm}_3$. Then as $\overline{\mm}_3$ is smooth over $\Spec \zz$ (see \cite[Theorem~2.7]{kn}),  one has a natural structure of reduced effective Cartier divisor on $\overline{H}$. 
Not surprisingly, the Horikawa index at $v$ can be directly expressed in terms of the multiplicity of $\overline{H}$ at $v$. We do not need the next result, but we would like to mention it for completeness. 
\begin{prop}  \label{mult_and_Ind}  Let $\overline{H} $ denote the closure of the hyperelliptic locus in $\overline{\mm}_3$ as above. Then the identity
\[ \mathrm{Ind}_v(f) = \mult_v \overline{H} + 2 \, \delta_1(\overline{\Gamma}_v)    \]
holds.
\end{prop}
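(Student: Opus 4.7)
The plan is to strengthen the proof of Proposition \ref{chi_and_Ind} into an identity on the moduli stack, and then pin down the divisor of the Knudsen-Mumford section $s$ by combining Proposition \ref{chi_and_K} with a classical formula for the class of the closure of the hyperelliptic locus in $\overline{\mm}_3$.

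\textbf{Step 1.} Return to the argument given in Proposition \ref{chi_and_Ind}. Viewing $s=\det \nu$ as a global section of the invertible sheaf $\ll_{\bar{\pi}}^{\otimes -4} \otimes \det \gg_{\bar{\pi}}$ on $\overline{\mm}_3$ and applying Knudsen-Mumford to the exact sequence (\ref{ses}), the proof of Proposition \ref{chi_and_Ind} shows that $\mathrm{Ind}_v(f) = \mult_v J^* K$, where $K = \divisor(s)$ on $\overline{\mm}_3$ and $J \colon S \to \overline{\mm}_3$ is the classifying map attached to $f$. This reduces the problem to the determination of the Cartier divisor $K$ on $\overline{\mm}_3$.

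\textbf{Step 2.} By Proposition \ref{smooth} we have $K|_{\mm_3}=H$, so since $K$ is effective there exist non-negative integers $c_0, c_1$ with $K = \overline{H} + c_0 \Delta_0 + c_1 \Delta_1$ as Cartier divisors on $\overline{\mm}_3$. Substituting this into the identity of Proposition \ref{chi_and_K} gives
\[ \divisor(\chi'_{18}) = 2\,\overline{H} + (2c_0+2)\Delta_0 + (2c_1+2)\Delta_1 \, . \]
Since $\chi'_{18}$ is a section of $\ll_{\bar{\pi}}^{\otimes 18}$ this divisor represents the class $18\,\bar{\lambda}_1$ in $\Pic(\overline{\mm}_3)$. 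On the other hand, the well-known class formula for the closure of the hyperelliptic locus in genus three reads
\[ \overline{H} = 9\,\bar{\lambda}_1 - \Delta_0 - 3\,\Delta_1 \]
in $\Pic(\overline{\mm}_3)\otimes \qq$. Combining these two identities and using the $\qq$-linear independence of $\bar{\lambda}_1, \Delta_0, \Delta_1$ in $\Pic(\overline{\mm}_3)\otimes\qq$ forces $c_0=0$ and $c_1=2$, so that $K = \overline{H} + 2\,\Delta_1$.

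\textbf{Step 3.} Pulling back along $J$ and taking multiplicities at $v$ yields
\[ \mathrm{Ind}_v(f) = \mult_v J^*K = \mult_v \overline{H} + 2\,\mult_v J^* \Delta_1 = \mult_v \overline{H} + 2\,\delta_1(\overline{\Gamma}_v) \, , \]
which is the desired equality. The main subtlety in this plan is Step 2: one must know that $K$ has no $\Delta_0$-component and exactly coefficient $2$ along $\Delta_1$. The Mumford class formula for $\overline{H}$ does this cleanly, but one could also verify it directly by analyzing the degeneration of the map $\nu_f$ at the generic points of $\Delta_0$ (irreducible curve with one non-separating node, generically non-hyperelliptic, so $\nu$ remains surjective and $c_0=0$) and of $\Delta_1$ (genus one plus genus two joined at a point, where the genus two component forces a two-dimensional cokernel of $\nu$, giving $c_1=2$).
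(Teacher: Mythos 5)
Your argument is correct, and it arrives at the same key fact as the paper but by a genuinely different route. Both proofs share the same skeleton: via the functoriality of $s=\det\nu_f$ and the proof of Proposition \ref{chi_and_Ind}, the statement reduces to identifying the divisor of $s$ (equivalently, of $\chi'_{18}$) on $\overline{\mm}_3$; your conclusion $K=\overline{H}+2\,\Delta_1$ is exactly the paper's identity $\divisor(\chi'_{18})=2\,\overline{H}+2\,\Delta_0+6\,\Delta_1$ in view of Proposition \ref{chi_and_K}. The difference lies in how the boundary coefficients are pinned down. The paper first uses primitivity of $\chi'_{18}$ to exclude vertical components, then computes the multiplicities of $\tilde{\chi}_{18}$ along $\Delta_0$ and $\Delta_1$ directly from its Fourier expansion (citing Igusa for $\Delta_1$), and obtains the class formula $[\overline{H}]=9\,\bar{\lambda}_1-\delta_0-3\,\delta_1$ only afterwards, as the Remark following the proposition. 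You instead import that class formula as a classical external input (Harris--Mumford) and combine it with the $\qq$-linear independence of $\bar{\lambda}_1,\delta_0,\delta_1$ in $\Pic(\overline{\mm}_3)\otimes\qq$ to solve for $c_0=0$, $c_1=2$. Your route avoids any theta-expansion computation, at the price of an external input that the paper derives rather than assumes; do cite it from \cite{hm} and not from the paper's own Remark, which would be circular. Two small points to tighten: in Step~2 you should justify that $K-\overline{H}$ is supported on $\Delta_0\cup\Delta_1$ with no vertical components (this follows because Proposition \ref{smooth} holds over $\zz$ and any vertical prime divisor of $\overline{\mm}_3$ meets $\mm_3$, contradicting $K|_{\mm_3}=H$); and your closing aside on verifying $c_1=2$ directly is too quick, since a two-dimensional cokernel of $\nu$ on the fiber at a generic point of $\Delta_1$ only gives $\ord(\det\nu)\geq 2$ along a transverse arc, so equality needs a finer local analysis --- the class-formula argument you actually use is what closes this.
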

\begin{proof} We only give a sketch of the proof. In view of Proposition \ref{chi_and_Ind} it suffices to prove the following statement. View $\chi'_{18}$ as a rational section of the line bundle $ \ll_{\bar{\pi}}^{\otimes 18}$ on $\overline{\mm}_3$. Then the divisor $\divisor(\chi'_{18})$ of $\chi'_{18}$ satisfies the relation
\begin{equation} \label{divisor_chi} \divisor(\chi'_{18}) = 2 \,\overline{H} + 2\, \Delta_0 + 6 \,\Delta_1 \,  . 
\end{equation}
By \cite[Theorem~2.7]{kn} we have that $\overline{\mm}_3$ is proper and smooth over $\Spec \zz$ with geometrically connected fibers. The primitivity of $\chi'_{18}$ then ensures that the support of $\divisor(\chi'_{18})$ contains no vertical fibers. We are left with verifying identity (\ref{divisor_chi}) over $\cc$. From Proposition \ref{divisor_chi_open} we obtain that the divisor of $\chi'_{18}$ on $\mm_3$ equals $2H$.  To obtain the multiplicities of $\chi'_{18}$ along $\Delta_0$, $\Delta_1$ it suffices to compute the multiplicities of  $\tilde{\chi}_{18}$ along these two divisors, as $ \d z_1 \wedge \d z_2 \wedge \d z_3 $ is  trivializing in $ \ll_{\bar{\pi}}$ by Lemma \ref{asympt_hdg}. The multiplicities of $\tilde{\chi}_{18}$ along $\Delta_0$, $\Delta_1$ can be computed by writing down explicitly the Fourier expansion of $\tilde{\chi}_{18}$, see for example \cite[p.~852]{ig} for the multiplicity along $\Delta_1$.
\end{proof}

\begin{remark} Let $\bar{\lambda}_1$ denote the class of $ \ll_{\bar{\pi}}$ in $\Pic(\overline{\mm}_3)$, and let $\delta_0$, $\delta_1$ denote the classes in $\Pic(\overline{\mm}_3)$ associated to the boundary divisors $\Delta_0$, $\Delta_1$. Relation (\ref{divisor_chi}) yields, as $\Pic(\overline{\mm}_3)$ is torsion-free \cite{ac}, the relation
\[ [\overline{H}] = 9 \, \bar{\lambda}_1 - \delta_0 - 3\, \delta_1 \]
in $\Pic(\overline{\mm}_3)$. This relation is well known, cf.\ \cite{hm} for instance. 
\end{remark}

\begin{remark} \label{mult_and_h} Combining Propositions \ref{yamaki_lower_bound} and  \ref{mult_and_Ind} we obtain the inequality
\[ \mult_v \overline{H} \geq h(\overline{\Gamma}_v) \, .  \]
In particular, if the special fiber $\xx_v$ has a pair of nodes of $h$-type, then $\xx_v$ is hyperelliptic.
\end{remark}

\section{Proof of Theorem \ref{main_intro}} \label{sec:proof_main}

We can now combine all previous results in order to prove Theorem \ref{main_intro}.

Following J.\ Gu\`ardia in \cite{gu} we consider the non-hyperelliptic genus three curves over $\qq$ given by the affine equation
\[ C_n \colon \, y^4 = x^4-(4n-2)x^2 +1 \, ,\]
where  $n \in \zz$, $n \neq 0, 1$. Our aim is to prove the following result, which directly implies Theorem \ref{main_intro}.
\begin{thm} \label{main} Consider the sequence of curves $C_n$ with $n \in \zz_{>0}$, $n \equiv 2\, ( \bmod\, 3)$ and $n \not \equiv 0, 1 \, (\bmod \, 2^5)$.  Then the height of a canonical Gross-Schoen cycle on $C_n^3$  tends to infinity as $n \to \infty$.
\end{thm}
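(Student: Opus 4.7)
The plan is to apply Theorem \ref{formula_GS_genusthree} to the family $C_n$ and show that (i) every non-archimedean local contribution to $\pair{\Delta_n,\Delta_n}$ is non-negative, while (ii) the archimedean contribution tends to infinity like $\log n$. By Theorem \ref{formula_GS_genusthree}, with $\Delta_n$ a canonical Gross-Schoen cycle on $C_n^3$,
\[ \pair{\Delta_n,\Delta_n} = 21 \sum_{p} \left(\frac{1}{18}\ord_p(\chi'_{18}) - \lambda(C_{n,p})\right) \log p + 21 \left(-\frac{1}{18}\log \|\chi'_{18}\|_{\Hdg} - \lambda(C_n(\cc))\right), \]
so once (i) and (ii) are established the theorem follows.

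For (i), I would invoke Gu\`ardia's explicit determination of the stable reduction types of the $C_n$ in \cite{gu}. The congruence conditions $n \equiv 2 \pmod 3$ and $n \not\equiv 0,1 \pmod{32}$ are designed to restrict the pm-graphs $\overline{\Gamma}_v$ arising at the bad primes to a short and tractable list. For each such graph I would compute $\lambda(\overline{\Gamma}_v)$ via Proposition \ref{cinkir} together with the sample computations in Examples \ref{loop}--\ref{lambda_for_two_gon}, bound $\ord_p(\chi'_{18})$ from below using Corollary \ref{lower_bound_ord}, and verify case by case the inequality
\[ \frac{1}{9}h(\overline{\Gamma}_v) + \frac{1}{9}\delta_0(\overline{\Gamma}_v) + \frac{1}{3}\delta_1(\overline{\Gamma}_v) \geq \lambda(\overline{\Gamma}_v). \]
This is a local analogue of the graph-theoretic bounds in Yamaki's work \cite{yaf}, and the expected outcome is that the congruence hypotheses exactly suffice to exclude the reduction types for which the inequality would fail.

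For (ii), the key observation (already indicated in the introduction) is that $C_n$ is the fibre of the family $D_\kappa \colon y^4 = x(x-1)(x-\kappa)$ at a parameter $\kappa_n \to 0$, with $|\kappa_n|$ comparable to $1/n$. By \cite[Proposition 8]{hs} the stable reduction of $D_\kappa$ at $\kappa = 0$ is the union of two smooth elliptic curves meeting transversally in two points, which is exactly the situation addressed by Corollary \ref{end_result} with $g=3$ and $h=1$. Applying Corollary \ref{end_result} to $\chi'_{18}$ viewed as a rational section of $\ll_f^{\otimes 18}$, and using Proposition \ref{chi_and_Ind} to evaluate $\ord_0(\chi'_{18})$ and Example \ref{lambda_for_two_gon} to evaluate $\lambda(\overline{\Gamma})$, the archimedean contribution acquires an asymptotic of the form $c\,(-\log|\kappa_n|)$ with $c > 0$, hence grows like a positive constant times $\log n$. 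The principal obstacle is the combinatorial case analysis in (i): one must match Gu\`ardia's reduction data to the appropriate pm-graphs and verify non-negativity in each case, and the congruence conditions on $n$ are essentially dictated by the need to make this verification work uniformly.
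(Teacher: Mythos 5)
Your proposal follows essentially the same route as the paper: non-negativity of the non-archimedean terms via Gu\`ardia's reduction types combined with Corollary \ref{lower_bound_ord} and the $\lambda$-computations of Section \ref{sec:some_lambda}, and growth of the archimedean term via the identification $C_n \cong D_{1/n}$, the stable reduction of $D_\kappa$ at $\kappa=0$ from \cite[Proposition~8]{hs}, and Corollary \ref{end_result} with $g=3$, $h=1$. The only substantive work left implicit is the case-by-case check (notably the strict inequality in the two-elliptic-curves case, which requires the $h$-type term from Corollary \ref{lower_bound_ord} rather than Proposition \ref{chi_and_Ind} alone, and which is also what makes your archimedean constant $c$ positive); this is exactly what the paper carries out in Section \ref{sec:proof_finite}.
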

Let 
\[ a = \sqrt{n} + \sqrt{n-1} \, , \quad \mu = i \sqrt[4]{4n(n-1)} \, , \quad \zeta = (1+i)/\sqrt{2} \, , \quad k_0 = \qq(a,\zeta \mu) \, . \]
Let $\alpha = \sqrt[3]{ 18 - 6 \sqrt{3}}$, and let $\alpha_n$ be a root of the equation
\[ t^3(t^3- 24)^3 -2^8 \frac{ (n^2-n+1)^3}{n^2(n-1)^2 } (t^3-27) = 0 \, .    \]
Put $k_n = k_0(\alpha,\alpha_n)$. By \cite[Section~3]{gu}, for $n$ as in Theorem \ref{main} the curve $C_n$ acquires semistable reduction over $k_n$. 

Let $f \colon \xx_n \to \Spec O_n$ denote the stable model of $C_n$ over the ring of integers $O_n$ of $k_n$. View $\chi'_{18}$ as a rational section of the invertible sheaf $\ll_f^{\otimes 18}$ on $\Spec O_n$. By Theorem \ref{formula_GS_genusthree} we obtain for the height $\pair{\Delta_n,\Delta_n}$ of a canonical Gross-Schoen cycle $\Delta_n$ on $C_n^3$  that
\[  \begin{split} \pair{\Delta_n,\Delta_n} & = \frac{21}{[k_n\colon\qq]}\left( \sum_{v \in M(k_n)_0} \left( \frac{1}{18} \ord_v (\chi'_{18})-\lambda(C_{n,v}) \right) \log Nv \right) \\ &  \hspace{2.5cm}  -\frac{1}{18}\log\|\chi'_{18}\|_{\Hdg}(C_n) - \lambda(C_n)  \, . \end{split}  \]
In Section \ref{sec:proof_finite} we will show the following result. 
\begin{thm} \label{finite_places} For $n$ as in Theorem \ref{main} and for $v \in M(k_n)_0$ we have that the local non-archimedean contribution $(\frac{1}{18}\ord_v (\chi'_{18}) -\lambda(C_{n,v}))\log Nv$  is non-negative.
\end{thm}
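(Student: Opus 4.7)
The plan is to reduce the stated positivity to a purely graph-theoretic inequality on the pm-graph $\overline{\Gamma}_v$ of the stable reduction of $C_n$ at $v$, and then to verify that inequality by combining additivity of the relevant invariants with Gu\`ardia's explicit description of the stable reduction at every finite place.

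First I would apply Corollary \ref{lower_bound_ord} to the stable model of $C_n$ over $O_n$ at each $v \in M(k_n)_0$ to obtain
\[ \ord_v(\chi'_{18}) \geq 2h(\overline{\Gamma}_v) + 2\delta_0(\overline{\Gamma}_v) + 6\delta_1(\overline{\Gamma}_v) \, . \]
Theorem \ref{finite_places} is thereby reduced to the graph-theoretic inequality
\[ 18\,\lambda(\overline{\Gamma}_v) \leq 2h(\overline{\Gamma}_v) + 2\delta_0(\overline{\Gamma}_v) + 6\delta_1(\overline{\Gamma}_v) \, . \]
The invariants $\lambda$, $\delta_0$ and $\delta_1$ are additive under the wedge decomposition into irreducible components by \cite[Theorem 4.3.2]{zhgs}, and $h$ is also additive: by \cite[Lemma 2.1]{ya} there is at most one $h$-type pair on a pm-graph, and such a pair lies entirely within a single irreducible component. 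It therefore suffices to verify the inequality on each irreducible component of $\overline{\Gamma}_v$ separately.

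Second, I would invoke \cite[Section 3]{gu}, which gives the stable model of each $C_n$ place by place in terms of its combinatorial data, and from it read off the list of irreducible pm-graph components that arise for $n$ in the arithmetic progression of Theorem \ref{main}. For the $n$ in question, each such component can be matched with a graph already treated in Section \ref{sec:some_lambda}: a loop attached to a genus-two vertex (Example \ref{loop}), a single edge joining vertices of genera $1$ and $2$ (Example \ref{segment} with $h=1$), or two genus-one vertices joined by two parallel edges of weights $m_1, m_2$ (Example \ref{lambda_for_two_gon} with $g=3$, $h=1$). In the first two cases the inequality reduces to the trivial numerical statements $27\delta \leq 28\delta$ and $36\delta \leq 42\delta$. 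In the third case $\delta_1=0$ and $h = \min(m_1,m_2)$, and substituting the formulas of Example \ref{lambda_for_two_gon} gives, after clearing denominators and assuming $m_1 \leq m_2$, the inequality
\[ 29 m_1^2 + m_2^2 \geq 6 m_1 m_2 \, , \]
which holds by the algebraic identity $29 m_1^2 + m_2^2 - 6 m_1 m_2 = (m_2 - 3m_1)^2 + 20 m_1^2 \geq 0$.

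The main obstacle lies in the bookkeeping of the second step: one must read off from Gu\`ardia's tables the dual graph of each bad fiber with its vertex genera, decompose it into irreducible components, and confirm that each component indeed belongs to the short list above. The arithmetic conditions on $n$ in Theorem \ref{main}, namely $n \equiv 2 \pmod 3$ and $n \not\equiv 0, 1 \pmod{2^5}$, are precisely what rule out ``exotic'' irreducible components (for instance multi-edge graphs joining vertices of genera $0$ and $1$, or four-edge graphs between two genus-zero vertices), for which the combinatorial bound coming from Corollary \ref{lower_bound_ord} may be too weak. In this sense the congruence conditions play here the role analogous to the ``forbidden topological types'' hypothesis in Yamaki's function field argument \cite{yaf}.
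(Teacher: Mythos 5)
Your proposal is correct and takes essentially the same route as the paper: reduce via Corollary \ref{lower_bound_ord} to a combinatorial inequality, use Gu\`ardia's classification of the reduction types (good reduction away from $n(n-1)$, a tree with three elliptic tails at primes over $2$, and two elliptic curves meeting in two points at odd primes dividing $n(n-1)$) together with the $\lambda$-formulas of Section \ref{sec:some_lambda}, and finish with an explicit positive quadratic form --- indeed your $29m_1^2+m_2^2-6m_1m_2\geq 0$ is exactly the paper's $24m^2-4mn+n^2>0$ under $m_1=m$, $m_2=m+n$. The only cosmetic differences are that you organize the verification through the wedge decomposition into irreducible components (the paper argues place by place, invoking additivity only via Example \ref{tree}) and that you include the loop-on-a-genus-two-vertex case, which does not actually occur in Gu\`ardia's tables for these $n$ but is harmless since the inequality holds for it as well.
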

\begin{cor}  \label{lower_bound_height}  For $n$ as in Theorem \ref{main}  the inequality
\[ \pair{\Delta_n,\Delta_n} \geq  -\frac{1}{18}\log\|\chi'_{18}\|_{\Hdg}(C_n) - \lambda(C_n)  \]
holds.
\end{cor}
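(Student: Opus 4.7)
The plan is to derive this corollary directly from the displayed formula for $\pair{\Delta_n,\Delta_n}$ that appears immediately above its statement, together with Theorem \ref{finite_places}.

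Specifically, I would invoke Theorem \ref{finite_places}: for every $v \in M(k_n)_0$, the local non-archimedean contribution $\left(\frac{1}{18}\ord_v(\chi'_{18}) - \lambda(C_{n,v})\right) \log Nv$ is non-negative. Summing over $v$ yields a non-negative quantity, and the prefactor $21/[k_n\colon\qq]$ is positive, so the entire finite-places contribution to the displayed formula is itself non-negative. Dropping it from the right-hand side preserves the direction of the inequality and produces exactly the claimed lower bound.

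There is no real obstacle at this step; the corollary is a formal consequence of the displayed identity. All of the substantive work is concentrated in Theorem \ref{finite_places}, whose proof rests on the lower bound for $\ord_v(\chi'_{18})$ given by Corollary \ref{lower_bound_ord}, combined with a case-by-case combinatorial analysis of the $\lambda$-invariants (via the examples in Section \ref{sec:some_lambda} and Proposition \ref{cinkir}) for the stable reduction types of the $C_n$ determined by Gu\`ardia in \cite{gu}. That analysis is deferred to Section \ref{sec:proof_finite}.
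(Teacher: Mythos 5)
Your proposal is correct and is exactly how the paper treats this corollary: it is stated as an immediate consequence of the displayed formula for $\pair{\Delta_n,\Delta_n}$ together with Theorem \ref{finite_places}, since the non-archimedean sum is non-negative term by term and carries a positive prefactor, so discarding it yields the claimed lower bound. All substantive content is indeed in Theorem \ref{finite_places}, proved in Section \ref{sec:proof_finite}, just as you indicate.
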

In Section \ref{sec:archimedean} we will show
\begin{thm} \label{asympt_origami} There exists a positive rational number $c$ such that for $n \to \infty$ the asymptotics
\[  -\frac{1}{18}\log\|\chi'_{18}\|_{\Hdg}(C_n) - \lambda(C_n) \sim c \log n  \]
holds. 
\end{thm}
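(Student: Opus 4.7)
The plan is to apply Corollary \ref{end_result} to the family $D_\kappa \colon y^4 = x(x-1)(x-\kappa)$ as $\kappa \to 0$. First I would identify $C_n$ with a fiber of this family. The branch locus of $C_n$ consists of the four roots $\pm a, \pm 1/a$ of $x^4 - (4n-2)x^2 + 1$ with $a = \sqrt{n} + \sqrt{n-1}$; since $a - 1/a = 2\sqrt{n-1}$ and $a + 1/a = 2\sqrt{n}$, the cross-ratio of these four points is $(n-1)/n$. A M\"obius change of coordinates therefore identifies $C_n$ with $D_{\kappa_n}$ for some $\kappa_n$ satisfying $\kappa_n \to 0$ and $\log|\kappa_n| = -\log n + O(1)$ as $n \to \infty$.

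Next I would invoke \cite[Proposition~8]{hs}, which describes the stable reduction of the family $D_\kappa$ at $\kappa = 0$: after a base change $\kappa = \tau^e$ for a suitable positive integer $e$, the pulled-back family admits a stable model $f \colon \xx \to \DD$ whose special fiber is the union of two smooth elliptic curves meeting transversally at two points, with smooth total space. This places us in the setting of Corollary \ref{end_result} with $g=3$ and $h=1$. The associated polarized dual graph $\overline{\Gamma}$ consists of two vertices of genus~$1$ joined by two edges, of weights $m_1, m_2 \in \zz_{>0}$ given by the thicknesses of the two nodes on the total space.

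Applying Corollary \ref{end_result} to the rational section $s = (\chi'_{18})^{14}$ of $\ll_f^{\otimes 28 \cdot 9}$ (taking $a=9$) and dividing the resulting asymptotic by $28$ yields
\[ -\tfrac{1}{18}\log \|\chi'_{18}\|_{\Hdg}(D_{\tau^e}) - \lambda(D_{\tau^e}) \sim -\bigl(\tfrac{1}{18}\ord_0(\chi'_{18}) - \lambda(\overline{\Gamma})\bigr) \log|\tau| \]
as $\tau \to 0$. Example \ref{lambda_for_two_gon} provides $\lambda(\overline{\Gamma})$ explicitly, and Corollary \ref{lower_bound_ord} (applied with $\delta_1(\overline{\Gamma}) = 0$ and $h(\overline{\Gamma}) = \min(m_1, m_2)$) furnishes a lower bound for $\ord_0(\chi'_{18})$. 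An elementary quadratic inequality in $m_1, m_2$ (with negative discriminant) then shows that the coefficient $c' := \tfrac{1}{18}\ord_0(\chi'_{18}) - \lambda(\overline{\Gamma})$ is strictly positive and rational. Combined with $-\log|\tau| = \tfrac{1}{e}(-\log|\kappa_n|) \sim \tfrac{1}{e}\log n$ this produces the required asymptotic with $c = c'/e > 0$.

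The main obstacle I expect is extracting from \cite[Proposition~8]{hs} the two geometric facts needed to invoke Corollary \ref{end_result} verbatim: that the stable fiber at $\kappa=0$ consists of exactly two smooth genus-one components joined at two points, and that, after the appropriate base change, the total space of the stable model is smooth. The remaining steps---the bookkeeping for the ramification index $e$ and the elementary inequality ensuring positivity of $c'$---are routine once this geometric picture is in place.
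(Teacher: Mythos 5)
Your proposal is correct and follows essentially the same route as the paper: identify $C_n$ with $D_{\kappa}$ for $\kappa\to 0$ via the cross-ratio of the branch points, invoke \cite[Proposition~8]{hs} for the stable reduction (two elliptic curves joined at two points), apply Corollary \ref{end_result} to a suitable power of $\chi'_{18}$, and account for the ramified base change; your positivity argument via Example \ref{lambda_for_two_gon}, Corollary \ref{lower_bound_ord} and the quadratic inequality is exactly the content of Lemma \ref{glue_elliptic}, which the paper simply cites at this point. Note only that Corollary \ref{end_result} does not require the total space to be smooth (so the thicknesses $m_1,m_2$ may be arbitrary), and that your cross-ratio $(n-1)/n$ corresponds after reordering to the paper's value $1/n$.
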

Combining Corollary \ref{lower_bound_height} and Theorem \ref{asympt_origami} one finds Theorem \ref{main}.

\section{Proof of Theorem \ref{finite_places}} \label{sec:proof_finite}

Assume that $n$ is as in Theorem \ref{main}. The reduction types of $C_n$ at all $v \in M(k_n)_0$ are given in \cite{gu}. 
At a prime of $O_n$ not dividing $n(n-1)$ the curve $C_n$ has good reduction, by \cite[Proposition~3.1]{gu}. At a prime of $O_n$ dividing $2$ we have by \cite[Theorem~7.4]{gu} that the special fiber consists of a smooth genus zero component, with three disjoint elliptic curves attached to it. The dual graph of the special fiber is thus a polarized tree in this case. Finally, at an odd prime of $O_n$ dividing $n(n-1)$ the special fiber is the union of two elliptic curves meeting in two distinct points, by \cite[Theorem~5.3]{gu}. Hence in this case the polarized dual graph of the special fiber consists of two vertices of genus one, joined by two edges. 

We now analyze each of these various cases.  Let $S = \Spec R$ be the spectrum of a discrete valuation ring $R$. Let $f \colon \xx \to S$ be a stable curve with generic fiber smooth and non-hyperelliptic of genus three. Let $\overline{\Gamma}$ denote the polarized dual graph associated to $f$. 

Theorem \ref{finite_places} is proved by the following three lemmas.
\begin{lem} Assume that $f \colon \xx \to S$ is smooth. Then $\frac{1}{18} \ord_v (\chi'_{18}) -\lambda(\overline{\Gamma}) \geq 0$.
\end{lem}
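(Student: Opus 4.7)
When $f\colon\xx\to S$ is smooth, the special fiber $\xx_v$ is a smooth genus three curve, so the associated pm-graph $\overline{\Gamma}=(\Gamma,\mathbf{q})$ is the one-point graph with $V=\{p\}$, $\mathbf{q}(p)=3$, and no edges. My plan is to show independently that each of the two terms in $\frac{1}{18}\ord_v(\chi'_{18})-\lambda(\overline{\Gamma})$ is non-negative in this case; in fact $\lambda(\overline{\Gamma})=0$ and $\ord_v(\chi'_{18})\geq 0$.

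For the $\lambda$-term, I would read off the values directly from the definitions recalled in Section \ref{sec:nonarch}. Since $\Gamma$ is a single point, the volume satisfies $\delta(\Gamma)=0$, so $\delta_h(\overline{\Gamma})=0$ for all $h$. The admissible measure $\mu$ on a point-graph is the Dirac mass at $p$, and the admissible Green's function $g_\mu$ is identically zero. Plugging these into (\ref{phi_nonarch}) and the definition of $\epsilon(\overline{\Gamma})$, one obtains $\varphi(\overline{\Gamma})=0$ and $\epsilon(\overline{\Gamma})=0$, and hence $\lambda(\overline{\Gamma})=0$ by (\ref{lambda_nonarch}). Alternatively, one can invoke Proposition \ref{cinkir}: for a one-point graph $\tau(\Gamma)=0$ and $\theta(\overline{\Gamma})=0$ (the sum being empty once the canonical divisor is supported at a single vertex with no contribution from resistance to itself, and $\delta(\Gamma)=0$), giving $(8g+4)\lambda(\overline{\Gamma})=0$ directly.

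For the $\chi'_{18}$-term, Proposition \ref{chi_and_Ind} gives the identity
\[ \ord_v(\chi'_{18})=2\,\mathrm{Ind}_v(f)+2\,\delta(\Gamma_v)\,. \]
Since $\delta(\Gamma_v)=0$ in the smooth case, one has $\ord_v(\chi'_{18})=2\,\mathrm{Ind}_v(f)$. The Horikawa index $\mathrm{Ind}_v(f)=\length_{\oo_S}\QQ_f$ is by definition the length of a finite-length torsion module, hence a non-negative integer; equivalently, the final sentence of Proposition \ref{chi_and_Ind} says that $\chi'_{18}$ extends as a global section of $\ll_f^{\otimes 18}$, so $\ord_v(\chi'_{18})\geq 0$ automatically.

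Combining these two computations gives $\frac{1}{18}\ord_v(\chi'_{18})-\lambda(\overline{\Gamma})=\frac{1}{9}\mathrm{Ind}_v(f)\geq 0$, proving the lemma. There is no genuine obstacle here; the only point worth a line of care is the verification that the Green's function and admissible measure on a point-graph are trivial, so that $\varphi$, $\epsilon$ and $\lambda$ all vanish, and that the Horikawa index is a non-negative integer by virtue of being the length of a coherent torsion sheaf on $S$.
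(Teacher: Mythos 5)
Your proposal is correct, and its skeleton is the same as the paper's: show $\lambda(\overline{\Gamma})=0$ for the one-point pm-graph and show $\ord_v(\chi'_{18})\geq 0$, then combine. The only real difference is how the second fact is obtained. The paper invokes Proposition \ref{divisor_chi_open} directly: since $f$ is smooth and the generic fiber is non-hyperelliptic, the classifying map lands in $\mm_3$ but not in $H$, so $\ord_v(\chi'_{18})=2\,\mult_v J^*H\geq 0$ with no further machinery. You instead route through Proposition \ref{chi_and_Ind}, writing $\ord_v(\chi'_{18})=2\,\mathrm{Ind}_v(f)+2\,\delta(\Gamma_v)=2\,\mathrm{Ind}_v(f)$ and using that the Horikawa index is the length of a finite-length module, hence non-negative. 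This is valid in the setting at hand (generic fiber smooth and non-hyperelliptic, which is exactly the hypothesis under which $\QQ_f$ has finite length and Proposition \ref{chi_and_Ind} applies), but it is a heavier tool: Proposition \ref{chi_and_Ind} is itself deduced from Proposition \ref{divisor_chi_open} together with Mumford's functorial Riemann--Roch, so your argument proves the needed inequality by way of a result that already contains the paper's one-line justification. What your route buys is the slightly finer identity $\frac{1}{18}\ord_v(\chi'_{18})-\lambda(\overline{\Gamma})=\frac{1}{9}\mathrm{Ind}_v(f)$, which is correct but not needed for the lemma. Two small remarks on the $\lambda$-computation: the verification via Proposition \ref{cinkir} is the cleanest ($\tau=\theta=\delta=0$); in the $\theta$-sum the single term $p=q$ is present but vanishes because $r(p,p)=0$, rather than the sum being empty, and the discussion of the admissible measure and Green's function is more care than the point-graph case requires.
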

\begin{proof} We have $\ord_v (\chi'_{18})= 2 \mult_v H \geq 0$ by Proposition \ref{divisor_chi_open}, and $\lambda(\overline{\Gamma})=0$.
\end{proof}
\begin{lem} Assume that $\Gamma$ is a tree. Then $\frac{1}{18} \ord_v (\chi'_{18}) -\lambda(\overline{\Gamma}) \geq \frac{1}{21} \delta(\Gamma) $. 
\end{lem}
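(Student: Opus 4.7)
The plan is to assemble the inequality directly from Corollary \ref{lower_bound_ord}, Example \ref{tree} specialized to $g=3$, and two elementary observations about trees of genus $3$.

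First I would verify the two combinatorial facts about $\overline{\Gamma}$ when $\Gamma$ is a tree. Since every edge of a tree is a bridge, no edge is of type $0$, hence $\delta_0(\overline{\Gamma}) = 0$ and $\delta(\Gamma) = \sum_{h=1}^{[g/2]} \delta_h(\overline{\Gamma})$. For $g=3$ the only admissible value is $h=1$, so in fact $\delta(\Gamma) = \delta_1(\overline{\Gamma})$. Next, observe that $h(\overline{\Gamma}) = 0$: contracting all but two edges of a tree produces a graph with three vertices, never the two-vertex graph required by the definition of an $h$-type pair.

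With these observations in hand, Corollary \ref{lower_bound_ord} simplifies to
\[ \ord_v(\chi'_{18}) \geq 6\,\delta_1(\overline{\Gamma}) = 6\,\delta(\Gamma) \, . \]
On the other hand, Example \ref{tree} with $g=3$ gives
\[ (8g+4)\,\lambda(\overline{\Gamma}) = 4 \cdot 1 \cdot 2 \cdot \delta_1(\overline{\Gamma}) = 8\,\delta(\Gamma) \, , \]
so that $\lambda(\overline{\Gamma}) = \tfrac{2}{7}\delta(\Gamma)$.

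Combining the two bounds yields
\[ \frac{1}{18}\ord_v(\chi'_{18}) - \lambda(\overline{\Gamma}) \geq \frac{6}{18}\,\delta(\Gamma) - \frac{2}{7}\,\delta(\Gamma) = \left( \frac{1}{3} - \frac{2}{7} \right) \delta(\Gamma) = \frac{1}{21}\,\delta(\Gamma) \, , \]
which is the desired inequality. The argument is entirely mechanical once the vanishing $\delta_0(\overline{\Gamma}) = 0 = h(\overline{\Gamma})$ for a tree is noted; there is no real obstacle, and no new input beyond Corollary \ref{lower_bound_ord} and the $\lambda$-formula for polarized trees of genus $3$ is needed.
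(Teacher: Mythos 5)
Your proof is correct and follows essentially the same route as the paper: note that for a tree $h(\overline{\Gamma})=\delta_0(\overline{\Gamma})=0$ and $\delta_1(\overline{\Gamma})=\delta(\Gamma)$, apply Corollary \ref{lower_bound_ord} to get $\ord_v(\chi'_{18})\geq 6\,\delta(\Gamma)$, and combine with $\lambda(\overline{\Gamma})=\tfrac{2}{7}\delta(\Gamma)$ from Example \ref{tree} to obtain $\left(\tfrac{1}{3}-\tfrac{2}{7}\right)\delta(\Gamma)=\tfrac{1}{21}\delta(\Gamma)$. Your explicit verification of the vanishing of $h(\overline{\Gamma})$ and $\delta_0(\overline{\Gamma})$ is a welcome elaboration of what the paper leaves implicit, but it is not a different argument.
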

\begin{proof} By Example \ref{tree} we have $\lambda(\overline{\Gamma}) = \frac{2}{7}\delta(\Gamma)$  and since $h(\overline{\Gamma})=\delta_0(\overline{\Gamma})=0$ and $\delta_1(\overline{\Gamma})=\delta(\Gamma)$ in this case we obtain by Corollary \ref{lower_bound_ord}
\[ \frac{1}{18} \ord_v (\chi'_{18}) -\lambda(\overline{\Gamma}) =   \frac{1}{18} \ord_v (\chi'_{18}) -  \frac{2}{7}\delta(\Gamma) \geq  \left( \frac{1}{3}  - \frac{2}{7} \right) \delta(\Gamma) = \frac{1}{21} \delta(\Gamma)  \, . \]
This proves the lemma. 
\end{proof}
\begin{lem} \label{glue_elliptic} Assume that the special fiber of $f$ is the union of two elliptic curves meeting in two distinct points. Let $m_1, m_2 \in \zz_{>0}$ be the thicknesses on $\xx$ of the two singular points of the special fiber. Then we have
\[ \frac{1}{18} \ord_v (\chi'_{18}) -\lambda(\overline{\Gamma}) \geq  \frac{1}{9\cdot28}(m_1+m_2) +\frac{1}{9} \min \{m_1,m_2 \} - \frac{1}{7} \frac{m_1m_2}{m_1+m_2} \, . \]
The expression on the right hand side is strictly positive.
\end{lem}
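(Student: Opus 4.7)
The proof splits into two parts: establishing the stated lower bound, and then showing that bound is strictly positive.

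\textbf{Setting up the pm-graph.} The pm-graph $\overline{\Gamma}$ associated to $f$ has exactly two vertices, each of genus $1$, joined by two edges of weights $m_1,m_2$. This is precisely the situation of Example \ref{lambda_for_two_gon} with $g=3$ and $h=1$, which yields
\[ (8g+4)\lambda(\overline{\Gamma}) = 28\,\lambda(\overline{\Gamma}) = \frac{4m_1m_2}{m_1+m_2} + 3(m_1+m_2)\, . \]
Both edges are of type $0$, so $\delta_0(\overline{\Gamma}) = m_1+m_2$ and $\delta_1(\overline{\Gamma}) = 0$. Moreover, contracting all edges but $\{e,e'\}$ is the identity and leaves an irreducible graph with two vertices each carrying polarization $1$, so $\{e,e'\}$ is a pair of $h$-type and $h(\overline{\Gamma}) = \min\{m_1,m_2\}$.

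\textbf{Applying Corollary \ref{lower_bound_ord}.} Plugging the data above into Corollary \ref{lower_bound_ord} gives
\[ \ord_v(\chi'_{18}) \;\geq\; 2\min\{m_1,m_2\} + 2(m_1+m_2)\, . \]
Dividing by $18$ and subtracting $\lambda(\overline{\Gamma})$ as computed above, the coefficient of $(m_1+m_2)$ becomes $\tfrac{1}{9} - \tfrac{3}{28} = \tfrac{1}{9\cdot 28}$, the coefficient of $\min\{m_1,m_2\}$ becomes $\tfrac{1}{9}$, and the remaining contribution is $-\tfrac{1}{7}\tfrac{m_1m_2}{m_1+m_2}$. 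This yields the stated inequality directly.

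\textbf{Strict positivity.} Without loss of generality assume $m_1\leq m_2$. Clearing denominators (multiplying by the positive quantity $9\cdot 28\,(m_1+m_2)$), strict positivity of the right-hand side is equivalent to
\[ (m_1+m_2)^2 + 28 m_1(m_1+m_2) - 36 m_1 m_2 \;>\; 0\, . \]
Expanding gives $29 m_1^2 - 6 m_1 m_2 + m_2^2 = (m_2 - 3m_1)^2 + 20\, m_1^2$, which is manifestly strictly positive since $m_1 > 0$. This completes the proof.

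The main conceptual step is the identification of the pm-graph data (especially recognizing $\{e,e'\}$ as a pair of $h$-type, so that Corollary \ref{lower_bound_ord} yields the crucial $2\min\{m_1,m_2\}$ contribution); the remainder is a straightforward algebraic verification and poses no real obstacle.
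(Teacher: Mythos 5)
Your proof is correct and follows the same route as the paper: read off $\delta_0$, $\delta_1$ and the $h$-type pair from the pm-graph, apply Corollary \ref{lower_bound_ord} together with Example \ref{lambda_for_two_gon}, and check positivity of the resulting quadratic form (your completed square $(m_2-3m_1)^2+20m_1^2$ versus the paper's substitution $m_1=m$, $m_2=m+n$ giving $24m^2-4mn+n^2$ are equivalent verifications).
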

\begin{proof}

We have $\delta_1(\overline{\Gamma})=0$, $\delta_0(\overline{\Gamma})=m_1+m_2$ and $h(\overline{\Gamma})=\min \{m_1,m_2\}$.  By Corollary \ref{lower_bound_ord} we have
\[ \ord_v (\chi'_{18}) \geq 2\min \{m_1,m_2\} + 2(m_1+m_2) \, . \]
Example \ref{lambda_for_two_gon} gives
\[ \lambda(\overline{\Gamma}) = \frac{3}{28}(m_1+m_2) + \frac{1}{7} \frac{m_1m_2}{m_1+m_2} \, . \]
Combining we find the required inequality. 
Assume that $m_2 \geq m_1$ and write $m_1=m$, $m_2=m+n$, with $m >0$, $n \geq 0$. The positivity of the right hand is then readily seen to be equivalent to the positivity of the quadratic form $24m^2 - 4mn + n^2$. The latter form is positive definite, and its positivity for all $m >0$, $n \geq 0$ follows.
\end{proof}
\begin{remark}  In general, by Corollary \ref{lower_bound_ord} one would like to be able to check for a given pm-graph $\overline{\Gamma}$ of genus three whether the inequality
\[ (?) \qquad \frac{1}{9} h(\overline{\Gamma})  + \frac{1}{9} \delta_0(\overline{\Gamma}) + \frac{1}{3} \delta_1(\overline{\Gamma}) -\lambda(\overline{\Gamma}) \geq 0 \]
is satisfied. Z.\ Cinkir gives in \cite{ci_adm} a list of all types of pm-graphs in genus three and displays $\lambda(\overline{\Gamma})$ for each of them. Thus inequality (?) can be checked using \cite{ci_adm} for any given $\overline{\Gamma}$. In \cite{ya} an invariant $\Phi(\overline{\Gamma})$ is introduced, and it turns out that
\[  \frac{1}{9} h(\overline{\Gamma})  + \frac{1}{9} \delta_0(\overline{\Gamma}) + \frac{1}{3} \delta_1(\overline{\Gamma}) -  \lambda(\overline{\Gamma}) = \frac{1}{12}\Phi(\overline{\Gamma}_0) + \frac{1}{21}\delta_1(\overline{\Gamma}) \, , \]
where $\overline{\Gamma}_0$ is the pm-graph obtained from $\overline{\Gamma}$ by contracting all edges of type~$1$.
In \cite[Theorem~2.7]{ya} sufficient conditions are given for non-negativity of $\Phi(\overline{\Gamma})$ for $\overline{\Gamma}$ with no edges of type~$1$. As is noted in \cite[Remark~2.9]{ya}, there exist $\overline{\Gamma}$ of genus three with no edges of type~$1$ for which $\Phi(\overline{\Gamma})$ is negative!
\end{remark}

\section{Proof of Theorem \ref{asympt_origami}} \label{sec:archimedean}

Specializing  (\ref{asymp_arch}) from Corollary \ref{end_result} to the case $g=3$, $h=1$ and working with a suitable power of $\chi'_{18}$ we obtain  the following.
\begin{thm} \label{asympt_general_union} Let $f \colon X \to \DD$ be a stable curve of genus three, smooth over $\DD^*$, whose generic fiber is non-hyperelliptic and whose special fiber $X_0$ consists of the union of two elliptic curves, joined at two points. Let $\overline{\Gamma}$ be the pm-graph associated to $f$. View $\chi'_{18}$ as a rational section of the line bundle $\ll_f^{\otimes 18}$ on $\DD$. Then one has the asymptotics
\[  -\frac{1}{18}\log\|\chi'_{18}\|_{\Hdg}(X_t) - \lambda(X_t) \sim  - \left(  \frac{1}{18} \ord_0(\chi'_{18})- \lambda(\overline{\Gamma})  \right)  \log |t|  \]
as $t \to 0$. 
\end{thm}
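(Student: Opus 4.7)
The plan is simply to invoke Corollary \ref{end_result} in the specialization $g=3$, $h=1$ (so that $g-h-1=1$ and $8g+4=28$), applied to a suitable power of $\chi'_{18}$ chosen to match the weight. The hypothesis on the special fiber in Corollary \ref{end_result}, namely that $X_0$ is the union of two smooth irreducible components of genera $h$ and $g-h-1$ joined at two points, is precisely what Theorem \ref{asympt_general_union} assumes; so there is no combinatorial obstruction.

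The corollary applies to rational sections of $\ll_f^{\otimes(8g+4)a}=\ll_f^{\otimes 28a}$ for some positive integer $a$, whereas $\chi'_{18}$ is a section of $\ll_f^{\otimes 18}$. The smallest way to reconcile the weights is to pass to the fourteenth power: $(\chi'_{18})^{\otimes 14}$ is a rational section of $\ll_f^{\otimes 252}=\ll_f^{\otimes 28\cdot 9}$, corresponding to $a=9$. Before applying the corollary one must check that $(\chi'_{18})^{\otimes 14}$ has no zeroes or poles on $\DD^*$. Since the generic fiber of $f$ is non-hyperelliptic by hypothesis, Proposition \ref{divisor_chi_open} implies that $\chi'_{18}$ is generically non-vanishing on the base, so its zero locus on $\DD^*$ is discrete; by shrinking $\DD$ (which does not affect the asymptotic as $t\to 0$) we may assume this locus is empty.

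Corollary \ref{end_result} then yields
\[
-\tfrac{1}{9}\log\|(\chi'_{18})^{\otimes 14}\|_{\Hdg}(X_t)-28\,\lambda(X_t)\;\sim\;-\left(\tfrac{1}{9}\ord_0\bigl((\chi'_{18})^{\otimes 14}\bigr)-28\,\lambda(\overline{\Gamma})\right)\log|t|
\]
as $t\to 0$. Using $\log\|(\chi'_{18})^{\otimes 14}\|_{\Hdg}=14\log\|\chi'_{18}\|_{\Hdg}$ and the additivity of $\ord_0$ under tensor powers, the left-hand side becomes $-\tfrac{14}{9}\log\|\chi'_{18}\|_{\Hdg}(X_t)-28\,\lambda(X_t)$, and the coefficient of $-\log|t|$ on the right becomes $\tfrac{14}{9}\ord_0(\chi'_{18})-28\,\lambda(\overline{\Gamma})$. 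Dividing through by $28$ gives precisely the claimed asymptotic
\[
-\tfrac{1}{18}\log\|\chi'_{18}\|_{\Hdg}(X_t)-\lambda(X_t)\;\sim\;-\left(\tfrac{1}{18}\ord_0(\chi'_{18})-\lambda(\overline{\Gamma})\right)\log|t|.
\]
There is no real obstacle: the substantive content—the computation of the height jump via Brosnan–Pearlstein and its matching with the slope of $\overline{\Gamma}$ via Proposition \ref{slope}—has already been absorbed into Corollary \ref{end_result}. Theorem \ref{asympt_general_union} is just the translation of that statement into the language of $\chi'_{18}$ and $\lambda$ rather than a general Siegel modular form and $(8g+4)\lambda$.
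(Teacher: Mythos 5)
Your proposal is correct and follows essentially the same route as the paper, which likewise obtains the theorem by specializing Corollary \ref{end_result} (i.e.\ the asymptotics (\ref{asymp_arch})) to $g=3$, $h=1$ and applying it to a suitable power of $\chi'_{18}$; your choice $(\chi'_{18})^{\otimes 14}$ with $a=9$, together with the shrinking of the disk to remove hyperelliptic fibers in $\DD^*$, is exactly the intended bookkeeping.
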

We deduce Theorem \ref{asympt_origami} from the latter result. Take $n \in \cc \setminus \{0,1\}$. It is readily checked that the curve $C_n \colon y^4 = x^4-(4n-2)x^2+1$ is isomorphic over $\cc$ with the curve
\[ D_\kappa \colon  \, y^4 = x(x-1)(x-\kappa) \, , \]
where $\kappa=1/n$. Indeed, the four roots of  $x^4-(4n-2)x^2+1$ are given by $\pm \sqrt{2n-1 \pm 2\sqrt{n^2-n}}$, and for a suitable ordering of these four roots, the associated cross ratio is $1/n$.

As $\kappa \to 0$ the curves $D_\kappa$ degenerate into the tacnodal curve $y^4=x^2(x-1)$. 
By \cite[Proposition~8]{hs} the stable reduction  $X \to \DD$ of the family $D_\kappa$ at $\kappa=0$ has as special fiber the union of two copies of the elliptic curve $E_{-1} $ given by the equation $y^2=x^3-x$, joined at two points. By Lemma \ref{glue_elliptic} the rational number $b= \frac{1}{18} \ord_0(\chi'_{18})- \lambda(\overline{\Gamma})$ associated to the stable family $X \to \DD$ is strictly positive. Take $d$ a positive integer such that the family $D_\kappa \cong C_n$ has semistable reduction near $\kappa=0$ after a ramified base change of degree $d$ (we can take $d=2$ but this is not important). Putting $t = \sqrt[d]{\kappa}=\sqrt[d]{1/n}$ we deduce from Corollary \ref{asympt_general_union} that
\[ -\frac{1}{18}\log\|\chi'_{18}\|_{\Hdg}(C_n) - \lambda(C_n) \sim -b \log|t| = \frac{b}{d}  \log n \]
as $n \to \infty$. Theorem \ref{asympt_origami} follows.

\vspace{0.5cm}

\noindent Address of the author:\\ \\
Mathematical Institute  \\
Leiden University \\
PO Box 9512  \\
2300 RA Leiden  \\
The Netherlands  \\ \\
Email: \verb+rdejong@math.leidenuniv.nl+


\begin{thebibliography}{99}

\bibitem{acg} E.\ Arbarello, M.\ Cornalba, P.\ Griffiths, \emph{Geometry of algebraic curves. Volume II.} Grundlehren der Mathematischen Wissenschaften, 268. Springer, Heidelberg, 2011.

\bibitem{ar} S.\ Y.\ Arakelov, \emph{An intersection theory for divisors on an arithmetic surface}. Izv. Akad. USSR 86 (1974), 1164--1180.

\bibitem{ac} E.\ Arbarello, M.\ Cornalba, \emph{The Picard groups of the moduli spaces of curves}.  Topology 26 (1987), no. 2, 153--171.

\bibitem{beil} A.\ Beilinson, \emph{Height pairing between algebraic cycles}. In: Current trends in arithmetical algebraic geometry (Arcata, Calif., 1985), Contemp. Math. 67 (1987), 1--24.

\bibitem{bljpaa} S.\ Bloch, \emph{Height pairing for algebraic cycles}. J. Pure Appl. Algebra 34 (1984), 119--145.

\bibitem{bp} P.\ Brosnan, G.\ Pearlstein,  \emph{Jumps in the archimedean height}. Preprint, \verb+arxiv:1701.05527+.

\bibitem{bghdj} J.\ I.\ Burgos Gil, D.\ Holmes, R.\ de Jong, \emph{Positivity of the height jump divisor}.  Int. Math. Res. Notices, rnx169, doi.org/10.1093/imrn/rnx169 (2017).

\bibitem{cr} T.\ Chinburg, R.\ Rumely, \emph{The capacity pairing}. J. reine angew. Math. 434 (1993), 1--44.

\bibitem{ci_adm} Z.\ Cinkir, \emph{Admissible invariants of genus~3 curves.}  Manuscripta Math. 148 (2015), no. 3-4, 317--339.

\bibitem{ci_comp} Z.\ Cinkir, \emph{Computation of polarized metrized graph invariants by using discrete Laplacian matrix}.  
Math. Comp. 84 (2015), no. 296, 2953--2967.

\bibitem{ci_eff} Z.\ Cinkir, \emph{Zhang's conjecture and the effective Bogomolov conjecture over function fields}. Invent. Math. 183 (2011), 517--562.

\bibitem{dm} P.\ Deligne, D.\  Mumford, \emph{The irreducibility of the space of curves of given genus}.  Inst. Hautes Etudes Sci. Publ. Math. 36 (1969), 75--109.

\bibitem{fc} G.\ Faltings, C.-L.\ Chai, \emph{Degeneration of abelian varieties}. With an appendix by David Mumford. Ergebnisse der Mathematik und ihrer Grenzgebiete (3), 22. Springer-Verlag, Berlin, 1990. 

\bibitem{fa} G.\ Faltings, \emph{Calculus on arithmetic surfaces}. Ann. of Math. 119 (1984), 387--424.

\bibitem{geer} G.\ van der Geer, \emph{Siegel modular forms and their applications}. In: The 1-2-3 of modular forms, 181--245. Universitext, Springer, Berlin, 2008.

\bibitem{giso} H.\ Gillet, C. Soul\'e, \emph{Arithmetic analogs of the standard conjectures}. In: Motives (Seattle, WA, 1991), 129--140, Proc. Sympos. Pure Math., 55, Part 1, Amer. Math. Soc., Providence, RI, 1994.

\bibitem{fo} G.\ Forni, \emph{On the Lyapunov exponents of the Kontsevich-Zorich cocycle}. In: Handbook of dynamical systems. Vol. 1B, 549--580, Elsevier, Amsterdam, 2006. 

\bibitem{gs} B.\ Gross, C.\ Schoen, \emph{The modified diagonal cycle on the triple product of a pointed curve}. Ann. Inst. Fourier 45 (1995), 649--679.

\bibitem{gu}  J.\ Gu\`ardia, \emph{A family of arithmetic surfaces of genus~$3$}. Pacific J. Math. 212 (2003), 71--91.

\bibitem{hain_normal} R.\ Hain, \emph{Normal functions and the geometry of moduli spaces of curves}. In: G. Farkas and I. Morrison (eds.), Handbook of moduli. Vol. I, 527--578, Adv. Lect. Math. (ALM), 24, Int. Press, Somerville, MA, 2013.

\bibitem{hrar} R.\ Hain, D.\ Reed, \emph{On the Arakelov geometry of the moduli space of curves}. J. Differential Geom.  67 (2004), 195--228.

\bibitem{hain} R.\ Hain, \emph{Biextensions and heights associated to curves of odd genus}. Duke Math. J. 61 (1990), 859--898.

\bibitem{hm} J.\ Harris, D.\ Mumford, \emph{On the Kodaira dimension of the moduli space of curves}.  With an appendix by William Fulton. 
Invent. Math. 67 (1982), no. 1, 23--88. 

\bibitem{hs} F.\ Herrlich, G.\ Schmith\"usen, \emph{An extraordinary origami curve}.  Math. Nachr. 281 (2008), 219--237. 

\bibitem{ic} T.\ Ichikawa, \emph{Theta constants and Teichm\"uller modular forms}. J. Number Theory 61 (1996), 409--419.

\bibitem{ig} J.-I.\ Igusa, \emph{Modular forms and projective invariants}, Amer. J. Math. 89 (1967), 817--855. 

\bibitem{djnt} R.\ de Jong, \emph{N\'eron-Tate heights of cycles on jacobians}. J. Alg. Geom. 27 (2018), 339--381.

\bibitem{djtorus} R. de Jong, \emph{Torus bundles and 2-forms on the universal family of Riemann surfaces}. In: Handbook of Teichm\"uller theory. Vol. VI, 195--227, IRMA Lect. Math. Theor. Phys., 27, Eur. Math. Soc., Z\"urich, 2016.

\bibitem{djkzasympt} R. de Jong, \emph{Asymptotic behavior of the Kawazumi-Zhang invariant for degenerating Riemann surfaces}.  
Asian J. Math. 18 (2014), 507--523. 

\bibitem{djnormal} R. de Jong, \emph{Normal functions and the height of Gross-Schoen cycles}. Nagoya Math. Jnl. 213 (2014), 53--77.

\bibitem{djsecond} R.\ de Jong, \emph{Second variation of Zhang's $\lambda$-invariant on the moduli space of curves}. Amer. Jnl. Math. 135 (2013), 275--290.
    
\bibitem{kaw} N.\ Kawazumi, \emph{Johnson's homomorphisms and the Arakelov-Green function}. Preprint, \verb+arxiv:0801.4218+.

\bibitem{kn} F.\ Knudsen, \emph{The projectivity of the moduli space of stable curves. II. The stacks $M_{g,n}$.} Math. Scand. 52 (1983), no. 2, 161--199.

\bibitem{km} F.\ Knudsen, D.\ Mumford, \emph{The projectivity of the moduli space of stable curves. I. Preliminaries on ``det'' and ``Div''}. Math. Scand. 39 (1976), no. 1, 19--55.

\bibitem{ko} K.\ Konno, \emph{Clifford index and the slope of fibered surfaces}.  J. Algebraic Geom. 8 (1999), no. 2, 207--220. 

\bibitem{lrz}  G.\ Lachaud, C.\ Ritzenthaler, A.\ Zykin, \emph{Jacobians among abelian threefolds: a formula of Klein and a question of Serre}. Math. Res. Lett. 17 (2010), 323--333.

\bibitem{lear} D.\ Lear, \emph{Extensions of normal functions and asymptotics of the height pairing}. PhD thesis, University of Washington, 1990.

\bibitem{mo} M.\ M\"oller, \emph{Shimura and Teichm\"uller curves}.  J. Mod. Dyn. 5 (2011), no. 1, 1--32. 

\bibitem{mo-oo} B.\ Moonen, F.\ Oort, \emph{The Torelli locus and special subvarieties}. In: Handbook of moduli. Vol. II, 549--594, Adv. Lect. Math. (ALM), 25, Int. Press, Somerville, MA, 2013.

\bibitem{mb} L.\ Moret-Bailly, \emph{La formule de Noether pour les surfaces arithm\'etiques}. Invent. Math. 98 (1989), 491--498.

\bibitem{mu}  D.\ Mumford, \emph{Stability of projective varieties}. l'Ens. Math. (2) 23 (1977), no. 1-2, 39--110. 

\bibitem{hi} D.\ Mumford, \emph{Hirzebruch's proportionality theorem in the noncompact case}. Invent. Math. 42 (1977), 239--272. 

\bibitem{pearldiff} G.\ Pearlstein, \emph{$\mathrm{SL}_2$-orbits and degenerations of mixed Hodge structure}. J. Diff. Geometry 74 (2006), 1--67.

\bibitem{reid} M.\ Reid, \emph{Problems on pencils of small genus}. Preprint, 1990.

\bibitem{ts} S.\ Tsuyumine, \emph{Thetanullwerte on a moduli space of curves and hyperelliptic loci}. Math. Z. 207 (1991), no. 4, 539--568. 

\bibitem{ya} K.\ Yamaki, \emph{Graph invariants and the positivity of the height of the Gross-Schoen cycle for some curves}. manuscripta math. 131 (2010), 149--177.

\bibitem{yaf} K.\ Yamaki, \emph{Geometric Bogomolov's conjecture for curves of genus 3 over function fields}. J. Math. Kyoto Univ. 42 (2002), 57--81.

\bibitem{zhadm} S.\ Zhang, \emph{Admissible pairing on a curve}. Invent. Math. 112 (1993), 171--193.

\bibitem{zhgs} S.\ Zhang, \emph{Gross-Schoen cycles and dualising sheaves}. Invent. Math. 179 (2010), 1--73.

\bibitem{zh2} S.\ Zhang, \emph{Positivity of heights of codimension 2 cycles over function fields of characteristic~0}. Preprint, \verb+arXiv:1001.4788+.

\end{thebibliography}
\end{document}